\numberwithin{equation}{section}
\pgfplotsset{compat=1.15}
\theoremstyle{remark}
\newtheorem*{remark}{Remark}
\theoremstyle{definition}
\theoremstyle{plain}
\newtheorem{theoreme}{Theorem}[section]
\newtheorem{proposition}[theoreme]{Proposition}
\newtheorem{lemme}[theoreme]{Lemma}
\newtheorem{corollaire}[theoreme]{Corollary}
\title{Near critical asymptotics in the Frozen Erd\H{o}s-Rényi}
\author{Vincent Viau\thanks{Université Sorbonne Paris-Nord, LAGA, CNRS (UMR 7539) 93430 Villetaneuse, France.  \newline Email: viau@math.univ-paris13.fr}}
\date{\today}
\begin{document}
\maketitle
\selectlanguage{english}
\begin{abstract}
We consider a variant of the classical Erd\H{o}s-Rényi random graph, where components with surplus are slowed down to prevent the apparition of complex components. The sizes of the components of this process undergo a similar phase transition to that of the classical model, and in the critical window  the scaling limit of the sizes of the components is a "frozen" version of Aldous' multiplicative coalescent \cite{AldousCoalescent}. The aim of this article is to describe the long time asymptotics in the critical window for the total number of vertices which belong to a component with surplus. 
\end{abstract}

\section{Introduction}

We are interested in a modification of the classical Erd\H{o}s-Rényi random graph, the \textit{frozen Erd\H{o}s-Rényi} (introduced in \cite{ConCurParking}), the general idea of which is to obtain a simple graph, that is, without complex components (with surplus larger than $2$). This model is in the vein of numerous variants of the classical model, which have been introduced to see how phase transition is transposed through modification. Indeed it is well known \cite{ErdosRenyi} that the components' sizes of the Erd\H{o}s-Rényi random graph $G(n,m)$ with $n$ vertices and $m$ edges exhibits a phase transition at $m\sim \frac{n}{2}$. More precisely, when $m=cn$ with $c<\frac{1}{2}$ the largest component is of size $\log(n)$ whereas a giant component emerges in the supercritical case $m=cn, c>\frac{1}{2}$. The critical window has also been widely studied and when $m\sim \frac{n}{2}$ the components are of order $n^{2/3}$ (see e.g. \cite{BollobasevolutionRG}, \cite{Luczakgeant}, \cite{LuczakPittel} for full literature on the subject). Complex components of the classical model are also well understood: they are rare in the subcritical phase, and the giant component is practically the only complex component in the supercritical phase. This predominance of simple components (trees or unicycles) in the classical model motivates the study of this modified model which results in a simple graph. 

The frozen graph also resonates with several variants of the classical model which have been introduced to observe how perturbations affect the phase transition. For example, in 2000, Achlioptas suggested a class of variants, where at each step the added edge is chosen from two uniform random edges, according to a certain rule (see e.g. \cite{Achlioptasconjecture}). These models were introduced in particular to study discontinuous phase transitions \cite{Achlioptasconjecture}, but Riordan and Warnke \cite{Achlioptasprocesscontinuous} have in fact proved that the resulting phase transition is similar to that of the classical model. An other example is R\'ath and T\'oth's "forest fires" model \cite{ForestfiresSOC} where they add the possibility for components to "burn" and for which they prove a self-organized criticality. We thus are interested in understanding the phase transition for the frozen Erd\H{o}s-Rényi model.  

Let us first recall properly the classical model: consider independent identically distributed unoriented edges $E_i=\{X_i,Y_i\}$, where both endpoints are independent and uniform over $\{1,\ldots,n\}$ (notice that we may have $E_i=E_j$ for $i\neq j$ or $X_i=Y_i$). For $m\geq 0$, the Erd\H{o}s-Rényi random graph $G(n,m)$ is the (multi)graph whose vertices are $\{1,\ldots,n\}$ and whose edges are $(E_i)_{0\leq i\leq m}$. The \textit{frozen Erd\H{o}s-Rényi} process $\left(F_p(n,m):m\geq 0\right)$ depends on a parameter $p\in [0,1]$ and is constructed as follows: for $n\geq 1$, $\{1,\ldots,n\}$ still is the set of vertices, which are of two types: standard or frozen. Initially $F_p(n,0)$ is made of the $n$ standard isolated vertices $\{1,\ldots,n\}$. We use the same edges as in the Erd\H{o}s-Rényi model (which gives us a coupling between both processes) and let $(U_m)_{m\geq 1}$ a sequence of independent identically distributed variables with uniform law over $[0,1]$, which are also independent of the $(E_i)_{i\geq 1}$. We construct the process $F_p(n,.)$ according to the following rules:
\begin{itemize}
    \item if $E_m$ connects two trees, the edge $E_m$ is added to $F_p(n,m-1)$ to form $F_p(n,m)$. If this addition creates a cycle in the tree, we declare the new component frozen. 
    \item if both endpoints of $E_m$ are frozen, then $E_m$ is discarded and\newline  $F_p(n,m)=F_p(n,m-1)$.
    \item if $E_m$ connects a tree and a unicycle, then $E_m$ is discarded if $U_m>p$ and kept otherwise. If $E_m$ is kept the new connected component is declared frozen. 
\end{itemize}

\begin{figure}[!h]
\hspace{-0cm}
 \includegraphics[width=15cm]{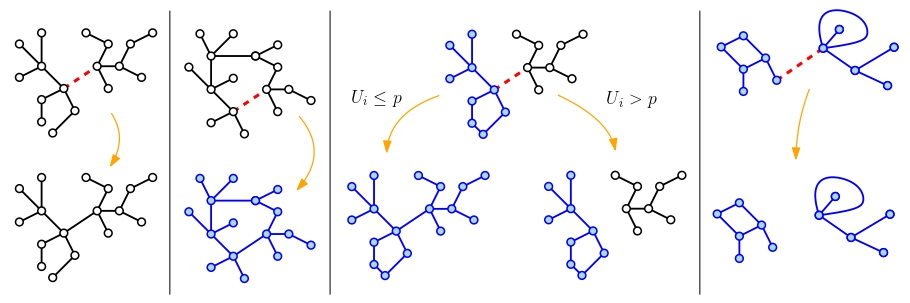}
 \caption{Transitions of the frozen Erd\H{o}s-Rényi. The new edge to be examined is in dotted red.}
 \end{figure}
Notice that frozen components are exactly unicycles and that the resulting graph has no complex components. For $p=0$, the process corresponds to a complete stop in the progression of component with surplus, while for $p=1$ the process is obtained from $G(n,m)$ by discarding the edges which would create a surplus of $2$. The case $p=1$ is therefore intimately tight to the classical Erd\H{o}s-Rényi random graph since both graphs coincide on the "forest part", and the number of vertices which belongs to a component with surplus in $G(n,m)$ is the same as the number of frozen vertices in $F_1(n,m)$. The frozen Erd\H{o}s-Rényi has been introduced by Contat and Curien in \cite{ConCurParking} in the case $p=\frac{1}{2}$, where they constructed a coupling between $F_{\frac{1}{2}} $ and a model of random parking on uniform Cayley trees. They only studied in details the case $p=\frac{1}{2}$ but their results can be generalized without additional difficulty for general $p\in [0,1]$, we prove it rigorously in \cite{ManuscritThèse} (in preparation).

The sizes of the components of the frozen process undergoes a phase transition when $m\sim \frac{n}{2}$, similar to that of the classical Erd\H{o}s-Rényi process. In this model, the number of frozen vertices becomes macroscopic in the supercritical regime. The model is also of interest to physicists: Krapivksy \cite{Krapivsky} studies the number of frozen vertices and gives a differential equation for its scaling limit in the supercritical regime. 
We focus here on the critical window $m\sim \frac{n}{2}$: in a similar way to the work of Aldous \cite{AldousCoalescent}, \cite{ConCurParking} proves a convergence of the renormalized sizes of the components when $m=\frac{n}{2}+\frac{t}{2}n^{2/3},$ for $t\in \mathbb{R}. $ Inspired by this critical window, we shall note
$$F_{p,n}(t):=F_p\left(n,\left\lfloor \frac{n}{2}+\frac{t}{2}n^{2/3}\right\rfloor\vee 0\right),$$ the continuous-time version of the process, and $\mathbb{F}_{p,n}(t)$ the process of the decreasing sizes of the frozen components (completed with zeros) renormalized by $n^{-2/3}$ followed by the decreasing sizes of the standard components renormalized by $n^{-2/3}$ (also completed with zeros) in $F_{p,n}(t)$. In the critical regime, the process $\mathbb{F}_{p,n}(t)$  converge towards a process called the \textit{frozen multiplicative coalescent} (which is a modification of Aldous' multiplicative coalescent \cite{AldousCoalescent}, as we will see in Section \ref{Applications}). As an immediate consequence, in the critical window, the total number of frozen vertices $\left(||\mathbb{F}_{p,n}(t)||_{\color{blue}{\bullet}}:t\in \mathbb{R}\right)$ renormalized by $n^{-2/3}$ converges to a process $X_p$ and it has been shown in \cite{ConCurParking} that $X_p$ is an inhomogeneous pure jump Feller process whose jump kernel is given by :
\begin{equation}
    \mathbf{n}_p(t,x,dy):=\frac{1}{2}\frac{dy}{\sqrt{2\pi y^3}}(y+2px)\frac{p_1(t-x-y)}{p_1(t-x)},
\end{equation} 
 where $t\in \mathbb{R}$ is the time parameter, $x\geq 0$ the space parameter and $y\geq 0$ the size of the jump and where $p_1$ is the density of a spectrally positive stable distribution of parameter $3/2$:
 \begin{equation}\label{defp_1}
    p_1(x)=-\frac{\mathrm{e}^{x^3/12}}{2}\left( x\text{Ai}\left(\frac{x^2}{4}\right)+2\text{Ai}'\left(\frac{x^2}{4}\right)\right)
\end{equation}
 where Ai is the Airy function (see \cite{AiryfunctionsandApplicationstophysics} for a definition). Here again these results are proved in details for $p=\frac{1}{2}$ in \cite{ConCurParking} and are generalized rigorously using the same tools in \cite{ManuscritThèse}.
 
 The process $X_p$ is the main object of this article: we are interested in the number of frozen vertices at the frontier of the critical window (the so-called near supercritical regime). With the previous description of $X_p$, Contat and Curien \cite{ConCurParking} conjectured that for all $p\in [0,1]$:
$$\frac{X_p(t)}{t}\xrightarrow[t\to +\infty]{(\mathbb{P})}1+p.$$
The aim of this article is to prove this conjecture, and actually demonstrates a stronger concentration of the process $X_p$ around the line $(1+p)t$. We distinguish the case $p=0$ from the other cases since the proofs use different tools and arguments. Our two main results are as follows.  
\begin{theoreme}\label{theoreme convergence p>0}
    For all $p>0$ we have the following convergence:
 $$X_p(t)-(1+p)t\xrightarrow[t\to +\infty]{a.s.}0.$$   
\end{theoreme}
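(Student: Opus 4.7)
The plan is to exploit the explicit jump kernel $\mathbf{n}_p$ via a Dynkin decomposition, show that the resulting drift is strongly restoring towards the line $(1+p)t$, and conclude via an $L^2$--Borel--Cantelli argument. By Feller theory applied to the pure-jump process $X_p$ one writes
$$X_p(t) = X_p(t_0) + \int_{t_0}^t b_p(s, X_p(s))\,ds + M_p(t) - M_p(t_0), \qquad t \geq t_0,$$
with drift $b_p(t,x) := \int_0^\infty y\,\mathbf{n}_p(t,x,dy)$ and $M_p$ a local martingale. The analytical cornerstone is the asymptotic $p_1(-u) \sim \sqrt{u/(2\pi)}\,e^{-u^3/6}$ as $u \to \infty$, obtained from the large-argument expansion of the Airy function applied to \eqref{defp_1}. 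Writing $u = x-t$, this gives $p_1(t-x-y)/p_1(t-x) \simeq e^{-u^2 y/2}$ at the natural jump scale $y \sim u^{-2}$, and elementary $\Gamma$-integrals then yield
$$b_p(t,x) = \frac{px}{x-t} + O\Bigl(\frac{1}{(x-t)^3}\Bigr), \qquad \int_0^\infty y^2\,\mathbf{n}_p(t,x,dy) = O\Bigl(\frac{x}{(x-t)^3}\Bigr),$$
valid in the regime $x - t \to \infty$.

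Setting $Y_p(t) := X_p(t) - (1+p)t$, a direct substitution gives the effective drift $\tilde b_p(t, y) = -y/(pt + y) + O(t^{-3})$, strictly restoring around $y = 0$. Applying Dynkin's formula to $f(y) = y^2$ and taking expectations,
$$\frac{d}{dt}\,\mathbb{E}[Y_p(t)^2] \leq -\frac{2}{pt}\,\mathbb{E}[Y_p(t)^2] + \frac{C}{t^2},$$
and integration with integrating factor $t^{2/p}$ yields $\mathbb{E}[Y_p(t)^2] \to 0$ polynomially fast. An equivalent and perhaps cleaner viewpoint: up to an integrable correction coming from the $O(t^{-3})$ error, the process $t^{1/p} Y_p(t)$ is a martingale (the deterministic ODE $\dot y = -y/(pt)$ has solution $y \propto t^{-1/p}$), and its a.s.\ convergence will follow from Doob's $L^2$ theorem (directly from bounded quadratic variation when $p > 2$).

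To upgrade $L^2$ convergence to the almost sure statement I would pick a geometric subsequence $(t_n)$ along which $\sum_n \mathbb{E}[Y_p(t_n)^2] < \infty$: Borel--Cantelli gives $Y_p(t_n) \to 0$ a.s., and $\sup_{t \in [t_n, t_{n+1}]} |Y_p(t) - Y_p(t_n)| \to 0$ a.s.\ because individual jumps of $X_p$ are confined, with overwhelming probability, to the scale $1/t^2$ — the Gaussian factor $e^{-u^2 y/2}$ suppressing anything larger. \emph{The main obstacle} is the a priori control: the expansion $b_p(t,x) \simeq px/(x-t)$ is valid only once $x-t$ is itself of order $t$. Before invoking it one must establish a cruder two-sided linear bound of the form $\varepsilon t \leq X_p(t) - t \leq \varepsilon^{-1} t$ for $t$ large, presumably via a comparison with the simpler $p=0$ process together with a rough tail estimate on the jump intensity. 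It is this a priori confinement that tames the feedback between $X_p$ and $b_p$ and allows the Grönwall estimate above to close.
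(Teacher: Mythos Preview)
Your setup—the compensator decomposition, the drift asymptotic $b_p(t,x)\simeq px/(x-t)$, and the recognition that an a priori bound on $X_p(t)-t$ is the crux—matches the paper. Two gaps prevent the argument from closing as written. First, the a priori lower bound cannot come from comparison with the $p=0$ process: by Theorem~\ref{theoremeconvergence p=0} the process $X_0(t)-t$ is positive recurrent and does \emph{not} grow like $\epsilon t$, so a coupling with $X_0$ yields nothing here. The paper's proof of $X_p(t)\ge(1+\epsilon)t$ (Section~\ref{section 1+epsilon}) exploits instead the $2pX$ factor in the kernel—the attraction of the freezer—which is precisely what vanishes at $p=0$. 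Second, your Gr\"onwall step linearises $-Y/(pt+Y)$ to $-Y/(pt)$, and this already requires $|Y|=o(t)$, the conclusion you are after. With only the one-sided bound $X_p-t\ge\epsilon t$ one cannot lower-bound $Y^2/(pt+Y)$ by $cY^2/t$ when $Y>0$; for that you would also need the upper bound $X_p-t\le Kt$, which you list as required but neither prove nor is proved anywhere in the paper. A further awkwardness is that the a priori bound holds only from a random time $t_0(\omega)$, so taking expectations of the Dynkin identity from a fixed deterministic time is not directly justified.

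The paper sidesteps all three issues by remaining pathwise throughout. Once $X_p(t)-t\ge\epsilon t$ a.s.\ for large $t$, the quadratic variation $\int q_p\,ds=\int O\bigl(X_p(s)/(X_p(s)-s)^3\bigr)\,ds$ is a.s.\ finite (only the lower bound is used), so the martingale $M=X_p-X_p^{\mathrm{pre}}$ converges a.s.\ to a finite limit $M_\infty$. The compensator then obeys, pathwise and up to an $O(t_1^{-2})$ error, the near-ODE $X_p^{\mathrm{pre}}(t)-X_p^{\mathrm{pre}}(t_1)=\int_{t_1}^t pX_p(s)/(X_p(s)-s)\,ds$, and a deterministic sub/super-solution comparison against the affine functions $(1+p)t-M_\infty\pm 2\delta$ finishes the proof. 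The substitute for an upper a priori bound is the observation (Proposition~\ref{proposition point de depart}) that $X_p^{\mathrm{pre}}(t)$ must fall between $(1+p)t-M_\infty\pm\delta$ infinitely often—otherwise the near-ODE itself forces a contradiction—providing starting points for the comparison from both sides without ever needing a global upper bound on $X_p$.
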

\noindent 
The case $p=1$ is intimately tight to the standard Erd\H{o}s-Rényi, in this case Theorem \ref{theoreme convergence p>0} informally states that the number of frozen vertices in $F_{1,n}(t)$ is approximatively $ 2t n^{2/3}$ when $n$ is large and $t$ goes to $+\infty$. This has to be related to Luczak's result \cite{Luczakgeant} saying that the size of largest cluster in $G_n(t)$ is close to $2t n^{2/3}$ when $t$ goes to $+\infty$: this largest cluster is indeed likely to be formed by the majority of unicycles in the frozen Erd\H{o}s-Rényi of parameter $1$. For $p=0$ we prove the following.
\begin{theoreme}\label{theoremeconvergence p=0}
 For $p=0$, the process $(X_0(t)-t)_{t\geq 0}$ is positive recurrent and converges at an exponential rate to an invariant probability measure $\mu$ on $(-\infty,\infty)$.

\noindent Furthermore, $\mu$ verifies the following tail inequality: there exists $c>0, C>0$ such that for all $t>0$:
\begin{equation}
\mu\left((-\infty,-t]\cup [t,+\infty) \right)\leq C \mathrm{e}^{-c t^3}.\notag
\end{equation}
\end{theoreme}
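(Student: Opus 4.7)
The plan is to reduce the statement to a Foster--Lyapunov / Meyn--Tweedie analysis of a time-homogeneous Markov process. The structural observation is that when $p=0$, the kernel $\mathbf{n}_0(t,x,dy)=\frac{dy}{2\sqrt{2\pi y}}\,\frac{p_1(t-x-y)}{p_1(t-x)}$ depends on $(t,x)$ only through the difference $t-x$. Since $X_0$ is piecewise constant between jumps, the centred process $Y(t):=X_0(t)-t$ drifts down at constant unit speed between jumps, and any jump of $X_0$ of size $z$ translates into a positive jump of $Y$ of the same size. Thus $Y$ is a time-homogeneous Feller Markov process on $\mathbb{R}$ with generator
$$\mathcal{L}f(y) = -f'(y) + \int_0^\infty \bigl(f(y+z)-f(y)\bigr)\,\kappa(y,z)\,dz, \qquad \kappa(y,z):=\frac{1}{2\sqrt{2\pi z}}\,\frac{p_1(-y-z)}{p_1(-y)},$$
and the theorem reduces to showing that $Y$ is geometrically ergodic with invariant law $\mu$ satisfying $\int e^{c|y|^3}\,d\mu(y)<\infty$ for some $c>0$.

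I would verify the geometric drift condition $\mathcal{L}V\leq -\lambda V + b\,\mathbf{1}_K$ with the Lyapunov function $V(y)=e^{c|y|^3}$ (smoothed near $0$ so that $V\in\mathscr{C}^1$ and $V\geq 1$), for a suitably small constant $c\in(0,1/6)$. The exponent $1/6$ and the cubic growth of $V$ are dictated by the asymptotics of the stable $3/2$ density, readable from the Airy expansion~\eqref{defp_1}:
$$p_1(u)\sim\frac{1}{\sqrt{2\pi}}\,u^{-5/2}\quad(u\to+\infty),\qquad p_1(u)\sim\frac{|u|^{1/2}}{\sqrt{2\pi}}\,e^{-|u|^3/6}\quad(u\to-\infty).$$
The drift inequality would be checked in the two regimes $y\to\pm\infty$ separately. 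For $y\to +\infty$, the ratio $p_1(-y-z)/p_1(-y)$ behaves like $e^{-((y+z)^3-y^3)/6}$ and forces the jump measure to concentrate on the scale $z\asymp y^{-2}$; a Laplace-type rescaling $u=y^2z$ then shows that the jump contribution to $\mathcal{L}V(y)/V(y)$ is only of sub-leading order in $y$, whereas the deterministic term $-V'(y)/V(y)=-3cy^2$ dominates and yields $\mathcal{L}V(y)\leq -\lambda y^2 V(y)$. For $y=-M$ with $M\to+\infty$, the total jump intensity is very large, $\int_0^\infty\kappa(-M,z)\,dz\sim CM^2$, and the jump measure places most of its mass on $z\in(0,2M)$, where $V(-M+z)$ is exponentially smaller than $V(-M)$ outside a band of bounded size around $z=M$; integrating gives a jump contribution to $\mathcal{L}V(-M)$ of order $-CM^2\,V(-M)$, which overwhelms the $+3cM^2V(-M)$ coming from $-V'$ as soon as $c$ is small enough. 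Combining both regimes, $\mathcal{L}V\leq -\lambda V$ outside a compact set $K$, and $\mathcal{L}V$ is bounded on $K$ by continuity, giving the required drift inequality.

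Applying Meyn and Tweedie's theorem on geometric $V$-ergodicity additionally requires $K$ to be a petite set and the process to be $\psi$-irreducible and aperiodic. These properties follow from the fact that $\kappa(y,z)$ is continuous in $y$ with $z\mapsto\kappa(y,z)$ absolutely continuous on $(0,\infty)$, combined with the deterministic downward drift of unit speed which brings any starting point into $K$ in finite time. The Meyn--Tweedie theorem then yields the $V$-uniform exponential convergence
$$\|P_t(y,\cdot)-\mu\|_{TV}\leq C\,V(y)\,e^{-\gamma t},\qquad y\in\mathbb{R},\ t\geq 0,$$
and the finiteness of $\mu(V)$ is built into the drift inequality. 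The tail bound of the theorem then follows from Markov's inequality applied to $V$: $\mu(\{|y|\geq t\}) \leq V(t)^{-1}\mu(V)\leq Ce^{-ct^3}$.

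The main obstacle I anticipate is the quantitative control of the jump integral in the regime $y\to -\infty$, where the jump measure has substantial mass over the entire interval $(0,2M)$ and one must verify that its cumulative effect is to decrease $V$ by a factor of order $M^2$ rather than merely reshuffle it. A careful three-piece decomposition of the integral according to whether $|z-M|$ is much smaller than, comparable to, or much larger than $M$, together with the appropriate asymptotic expansion of $p_1$ on each piece, should deliver the required bound. The delicate point is to keep the constants sharp enough to maintain $c<1/6$ strictly, ensuring that the invariant measure indeed has the claimed $e^{-ct^3}$ tail.
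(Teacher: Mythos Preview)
Your proposal is correct and follows the same route as the paper: pass to the homogeneous process $Y(t)=X_0(t)-t$, compute its generator, verify a Foster--Lyapunov drift inequality with $V\sim e^{c|y|^3}$, and invoke Meyn--Tweedie for geometric ergodicity and the tail bound via $\mu(V)<\infty$. The paper's only twist is an \emph{asymmetric} Lyapunov function $V(x)=e^{\alpha x^3}\mathbf{1}_{x\ge 0}+e^{\beta|x|^3}\mathbf{1}_{x<0}$ with $\alpha<1/6$ arbitrary and $\beta<\min(p_1(0)/9,\alpha)$; this decoupling lets the right-tail exponent be pushed up to $1/6$ while accepting a cruder constant on the left, whereas your single-$c$ version forces $c<p_1(0)/9$ on both sides---either is enough for the theorem as stated. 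One small slip in your $y=-M$ sketch: $V(-M+z)$ is exponentially smaller than $V(-M)$ on \emph{all} of $(0,2M)$, including near $z=M$ where it hits its minimum $V(0)=1$; it is the jump kernel $\kappa(-M,\cdot)$, not $V$, that concentrates in an $O(1)$ window around $z=M$ (since $p_1(M-z)$ peaks near $M-z\approx x_{\max}$), and that window is precisely where the paper extracts the $-CM^2V(-M)$ contribution.
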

\noindent This theorem has an interesting consequence on the frozen Erd\H{o}s-Rényi graph with parameter $0$: it implies the convergence of its forest part to a stationary law at the frontier of the critical window. For this corollary we note $[\mathcal{F}\mathcal{M}_0(t)]_{\circ}$ the scaling limit of the renormalized components' sizes of the forest part in the frozen graph with parameter $0$ (this notation will be justified in Section \ref{Applications}).
\begin{corollaire}
For all $\epsilon>0$, we have the following convergence in distribution for the $\ell^{3/2+\epsilon}$ topology
$$[\mathcal{F}\mathcal{M}_0(t)]_{\circ} \xrightarrow[t\to +\infty]{(d)} \Delta \mathbf{L}^{-M}, $$
where $M$ is a random variable with distribution $\mu$.
\end{corollaire}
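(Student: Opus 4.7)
The plan is to express the forest part $[\mathcal{F}\mathcal{M}_0(t)]_{\circ}$ as a functional of the total frozen mass $X_0(t)$ and an auxiliary Lévy object, and then to transfer the convergence provided by Theorem \ref{theoremeconvergence p=0} through this functional. The key observation from the construction in Section \ref{Applications} should be that, intuitively, freezing mass $X_0(t)$ at time $t$ in the critical window amounts to stopping a fraction of the dynamics, and the remaining ``tree part'' then behaves like the standard multiplicative coalescent at effective parameter $t - X_0(t)$. Concretely, this yields a representation of the form $[\mathcal{F}\mathcal{M}_0(t)]_{\circ} \stackrel{d}{=} \Delta \mathbf{L}^{t-X_0(t)}$, where $\mathbf{L}^{\theta}$ is the spectrally positive $3/2$-stable Lévy process with parabolic drift indexed by the critical parameter $\theta$ and $\Delta \mathbf{L}^{\theta}$ denotes its ordered excursion (jump) lengths, in the spirit of Aldous' encoding.

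With such a representation in hand, Theorem \ref{theoremeconvergence p=0} yields $X_0(t) - t \xrightarrow{(d)} M$ with $M \sim \mu$, so that $t - X_0(t) \xrightarrow{(d)} -M$. By Skorokhod representation, I would place these variables on a common probability space so that $t - X_0(t) \to -M$ almost surely, and reduce the corollary to the almost sure continuity of the map $\theta \mapsto \Delta \mathbf{L}^{\theta}$ at $\theta = -M$ in the $\ell^{3/2+\epsilon}$ topology. The large excursions are finite in number and depend continuously on $\theta$ by standard excursion-theoretic arguments, while the small excursions must be controlled in $\ell^{3/2+\epsilon}$ uniformly for $\theta$ in a compact window around $-M$.

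The main technical obstacle is precisely this $\ell^{3/2+\epsilon}$ continuity. The excursions of $\mathbf{L}^{\theta}$ live marginally in $\ell^{3/2+\epsilon}$ but not in $\ell^{3/2}$, so one cannot rely on a crude $\ell^{3/2}$ bound and instead needs a uniform tail estimate on the small excursions, for example via a coupling that matches the Lévy processes $\mathbf{L}^{\theta}$ for $\theta$ in a bounded range and compares their excursions. The exponential tail bound on $\mu$ provided by Theorem \ref{theoremeconvergence p=0} is crucial here: it guarantees that $-M$ lies in a bounded interval with overwhelming probability, allowing the uniform continuity estimate to be run on a compact $\theta$-range, and it also ensures that the limiting object $\Delta \mathbf{L}^{-M}$ is almost surely an element of $\ell^{3/2+\epsilon}$. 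Once these technical points are in place, the continuous mapping theorem yields the advertised convergence.
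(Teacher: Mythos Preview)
Your overall strategy is the same as the paper's: invoke Proposition~\ref{description partie foret}, which says that conditionally on $X_0(t)$ the standard part $[\mathcal{F}\mathcal{M}_0(t)]_{\circ}$ has the law of $\Delta\mathbf{L}^{(t-X_0(t))}$, and then feed in the convergence $t-X_0(t)\xrightarrow{(d)}-M$ from Theorem~\ref{theoremeconvergence p=0}. The paper treats the passage to the limit as immediate; you are more careful and raise the continuity of $\theta\mapsto \Delta\mathbf{L}^{(\theta)}$ in $\ell^{3/2+\epsilon}$, which is a legitimate technical point the paper does not spell out.

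There is, however, a genuine misidentification in your representation. In this paper $\mathbf{L}^{(\theta)}$ is \emph{not} a stable process with parabolic drift, and $\Delta\mathbf{L}^{(\theta)}$ does \emph{not} denote excursion lengths. Rather, $\mathbf{L}^{(\theta)}$ is the $3/2$-stable spectrally positive L\'evy \emph{bridge} on $[0,1]$ conditioned to hit $\theta$ at time $1$, and $\Delta\mathbf{L}^{(\theta)}$ is the ranked sequence of its \emph{jumps}. This is the scaling limit of critical uniform random forests (equation~\eqref{convergence forêt uniforme}), not the Aldous excursion encoding of the multiplicative coalescent. Your sentence ``the remaining tree part then behaves like the standard multiplicative coalescent at effective parameter $t-X_0(t)$'' is therefore off: the tree part behaves like a critical random forest at that parameter, whose limit is described by bridge jumps, not by coalescent excursions. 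The distinction matters if you actually try to prove continuity, since the relevant object is the law of jumps of a bridge with endpoint $\theta$, which varies with $\theta$ through the density $p_1$ via an $h$-transform, and this is where the argument lives.

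Finally, your claim that the exponential tail of $\mu$ is ``crucial'' is overstated. All that is needed for the limit $\Delta\mathbf{L}^{-M}$ to make sense and for the continuity argument to go through is that $M$ be almost surely finite, which is automatic since $\mu$ is a probability measure. The cubic-exponential tails are a refinement of Theorem~\ref{theoremeconvergence p=0}, not an ingredient of this corollary.
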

This is an example of "self-organized criticality" (see e.g. \cite{ForestfiresSOC} for the forest fire model, and \cite{CerfForien} for examples in percolation theory), since when $p=0$ the forest is not attracted by frozen particles (edges between trees and unicycles are forbidden). Thus the only way for a tree to freeze is to create an internal cycle. When $p>0$, the attraction of the freezer foster freezing of small trees, making a self-organized criticality impossible. More generally, we will give in Section \ref{Applications} a description of the scaling limit of the frozen graph, and give a new description of particles with surplus in Aldous' augmented multiplicative coalescent (Corollary \ref{corollairecoalescentAldous}). This section also involves scaling limits of critical random forests, studied by Martin and Yeo in \cite{YeoMartinCriticalForests}.  

The paper is divided in four main parts. The first one is mainly dedicated to background on the process $X_p$ and on the $p_1$ function. The second and third are respectively dedicated to the proofs of Theorem \ref{theoreme convergence p>0} and Theorem \ref{theoremeconvergence p=0}. The last section describes a few applications of both theorems and especially Corollary \ref{proposition convergence forêt} describes the stationary law in the case $p=0$.

\textbf{Acknowledgements.} We are very grateful to Nicolas Curien and Bénédicte Haas for useful advice and
feedback on this work. 

\section{Preliminaries}\label{section préliminaires}
\subsection{Preliminaries on $p_1$}
This section is dedicated to the study of the function $p_1$, the density of a $3/2$-spectrally positive stable Lévy process, which appears in the jump kernel of the process $X_p$. We recall the definition for $x\in \mathbb{R}$: 
$$p_1(x)=\frac{-\mathrm{e}^{x^3/12}}{2}\left(x \text{Ai}\left(\frac{x^2}{4}\right)+2\text{Ai}'\left(\frac{x^2}{4}\right)\right),$$
where Ai is the Airy function: solution of the differential equation $y''=xy$ with the condition $\text{Ai}(x)\to 0$ when $x\to +\infty$ (for further information about the Airy function, see e.g. \cite{AiryfunctionsandApplicationstophysics}).
\begin{figure}[!h]\label{figureplotp1}
\hspace{3cm}
 \includegraphics[width=8cm]{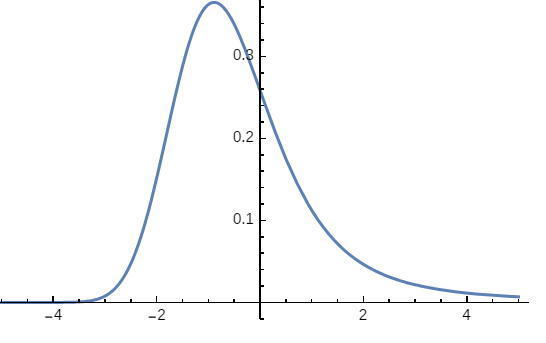}
 \caption{Plot of the $p_1$ function between $-5$ and $5$. The tails are extremely asymetric. The function has a maximum at $x_{\max}\approx -0,886$. }
 \end{figure}

\noindent The function $p_1$ is unimodal: increasing from $-\infty$ to some $x_{\text{max}}$ and decreasing then (see e.g. \cite{Zolotarevstable}, Theorem 2.7.6) and simulations give $x_{\text{max}}\approx -0,886...$ In particular, if $x\geq 0$ and $0\leq y\leq -x_{\text{max}}$ we have: 
\begin{equation}\label{inégalitécroissancep1}
\frac{p_1(x-y)}{p_1(x)}\geq 1.    
\end{equation}
\noindent The tails of $p_1$ are extremely asymetric: exponentially decreasing to zero at $-\infty$ and polynomially decreasing to zero at $+\infty$. More precisely, the $p_1$ function admits the following expansion at $-\infty$:
\begin{equation}\label{développement asymptotique p1}
    p_1(x)\underset{x\to -\infty}{=}\frac{\mathrm{e}^{-|x|^3/6}}{\sqrt{2\pi}}\left(\sqrt{|x|}+\frac{1}{6|x|^{5/2}}+O\left(|x|^{-7/2}\right)\right),
\end{equation}
and the following equivalent at $+\infty$:
\begin{equation}\label{equivalentp1+infini}
  p_1(x)\underset{x\to +\infty}{\sim}\frac{x^{-5/2}}{\sqrt{2\pi}}. 
\end{equation}
These expansions relies on the following results for Ai and its derivative Ai$'$ (see e.g. \cite{AiryfunctionsandApplicationstophysics} p.14):
\begin{equation}\label{asymptotique Ai}
\text{Ai}(z)\underset{z\to +\infty}{=}\frac{1}{2\sqrt{\pi}z^{1/4}}\mathrm{e}^{-\frac{2}{3}z^{3/2}}\left(1-\frac{5}{48x^{3/2}}+O\left(\frac{1}{z^{3}}\right)\right),
\end{equation}
and 
\begin{equation}\label{asymptotique Ai prime}
    \text{Ai}'(z)\underset{z\to +\infty}{=}-\frac{z^{1/4}}{2\sqrt{\pi}}\mathrm{e}^{-\frac{2}{3}z^{3/2}}\left(1+\frac{7}{48x^{3/2}}+O\left(\frac{1}{z^{3}}\right)\right).
\end{equation}
The expansions \eqref{développement asymptotique p1} and \eqref{equivalentp1+infini} then follow using \eqref{asymptotique Ai} and \eqref{asymptotique Ai prime} with $z=\frac{x^2}{4}$.

\noindent We give a crucial lemma on the ratio $\frac{p_1(x-y)}{p_1(x)}$ which appears in the jump kernel of $X_p$.
\begin{lemme}\label{lemme ratio p1}
    Let $y\in (0,1]$ be fixed. The function $f:x\mapsto f(x):=\frac{p_1(x-y)}{p_1(x)}$ is non-decreasing on $(-\infty,0)$.
\end{lemme}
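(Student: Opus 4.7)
The plan is to reduce the lemma to log-concavity of $p_1$ on the negative half-line. Since $p_1>0$, the monotonicity of $f$ is equivalent to $(\log f)'(x)\geq 0$, which rewrites as $(\log p_1)'(x-y)-(\log p_1)'(x)\geq 0$. For $x\in(-\infty,0)$ and $y>0$ one has $x-y<x<0$, so it suffices to prove that $(\log p_1)'$ is non-increasing on $(-\infty,0)$, i.e.\ that $\log p_1$ is concave there. I intend to establish this stronger statement directly by computing $(\log p_1)''$ from the explicit formula defining $p_1$.

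The concrete strategy is the following. Set $A:=\text{Ai}(x^2/4)$ and $A':=\text{Ai}'(x^2/4)$, and differentiate $p_1$ using the Airy equation $\text{Ai}''(z)=z\,\text{Ai}(z)$. A direct computation produces
$$p_1'(x)=-\frac{\mathrm{e}^{x^3/12}}{2}\bigl[(1+x^3/2)\,A+x^2 A'\bigr],$$
$$p_1''(x)=-\frac{\mathrm{e}^{x^3/12}}{2}\bigl[\tfrac{x^2}{4}(x^3+7)\,A+\tfrac{x}{2}(x^3+5)\,A'\bigr].$$
Introducing $R:=A'/A$, the logarithmic derivative of the Airy function evaluated at $x^2/4$, the ratios $p_1'/p_1$ and $p_1''/p_1$ become rational fractions in $x$ and $R$ with common denominator $x+2R$ (which never vanishes, since it equals $(xA+2A')/A$ and $p_1>0$).

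Expanding $(\log p_1)''=p_1''/p_1-(p_1'/p_1)^2$ and collecting terms is then expected to produce the identity
$$(\log p_1)''(x)=\frac{\bigl(\tfrac{3x^3}{4}-1\bigr)+4 x^2 R+5 x R^2}{(x+2R)^2}.$$
Once this identity is in hand, the proof concludes by pure sign inspection. On $(0,+\infty)$ the Airy function is positive and its derivative is negative, so $R<0$; combined with $x<0$ and $x^2>0$, this makes each of the three summands in the numerator strictly negative, while the denominator is a square. Thus $(\log p_1)''<0$ on $(-\infty,0)$, which gives the log-concavity and proves the lemma.

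The main obstacle is really the bookkeeping in the second differentiation and the subsequent simplification: the whole argument hinges on the precise form of the numerator above, and specifically on the slightly miraculous fact that it decomposes into three monomials in $(x,R)$ whose signs automatically coincide for $x<0$. Nothing finer than this sign coincidence is required, and in particular no use of the asymptotic expansions \eqref{développement asymptotique p1}--\eqref{equivalentp1+infini} enters the proof.
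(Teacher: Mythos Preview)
Your argument is correct and the claimed identity for $(\log p_1)''$ checks out: with $R=\text{Ai}'(x^2/4)/\text{Ai}(x^2/4)$ one indeed gets numerator $\tfrac{3x^3}{4}-1+4x^2R+5xR^2$, and for $x<0$ each monomial is strictly negative (using $\text{Ai}>0$, $\text{Ai}'<0$ on $(0,\infty)$, hence $R<0$, and $x+2R\neq 0$ from $p_1>0$). This gives strict log-concavity of $p_1$ on $(-\infty,0)$, which is more than enough.

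Your route is genuinely different from the paper's. The paper differentiates the ratio $f(x)=p_1(x-y)/p_1(x)$ directly, producing a two-point expression mixing $\text{Ai}$ and $\text{Ai}'$ at $x^2/4$ and $(x-y)^2/4$. The sign analysis then requires an auxiliary Wronskian-type inequality $\text{Ai}(u)\text{Ai}'(v)-\text{Ai}(v)\text{Ai}'(u)>0$ for $u\geq v\geq 0$, which is proved by showing the function is non-increasing in $v$ and invoking the asymptotics \eqref{asymptotique Ai}--\eqref{asymptotique Ai prime} at $+\infty$. By contrast, your log-concavity argument is a one-point computation: the second point $x-y$ never enters the analysis, the restriction $y\leq 1$ plays no role, and the asymptotic expansions are not needed at all. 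The trade-off is that your approach relies on a somewhat fortunate algebraic collapse of the numerator into three sign-definite monomials, whereas the paper's decomposition, though longer, proceeds term by term with more transparent cancellations. Either way, both proofs ultimately rest only on the Airy equation and the elementary sign pattern of $\text{Ai},\text{Ai}'$ on $[0,\infty)$.
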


\begin{proof}
First recall that the Airy function is positive and decreasing on $\mathbb{R}_+$, and its derivative is non-decreasing on $\mathbb{R}_+$ (see \cite{AiryfunctionsandApplicationstophysics}). Using $\text{Ai}''(x)=x\text{Ai}(x)$, we have:
$$p_1'(x)=\frac{-\mathrm{e}^{x^3/12}}{2}\left((1+\frac{x^3}{2})\text{Ai}\left(\frac{x^2}{4}\right)+x^2\text{Ai}'\left(\frac{x^2}{4}\right)\right).$$
Therefore we obtain:
\begin{align*}
f'(x)=\frac{\mathrm{e}^{(x-y)^3/12-x^3/12}}{p_1(x)^2}&\Big(\text{Ai}\left(\frac{x^2}{4}\right)\text{Ai}\left(\frac{(x-y)^2}{4}\right)a(x)+\text{Ai}'\left(\frac{x^2}{4}\right)\text{Ai}'\left(\frac{(x-y)^2}{4}\right)b(x)\\
&+ \text{Ai}\left(\frac{(x-y)^2}{4}\right)\text{Ai}'\left(\frac{x^2}{4}\right)c(x)+\text{Ai}\left(\frac{x^2}{4}\right)\text{Ai}'\left(\frac{(x-y)^2}{4}\right)d(x)\Big)
\end{align*}
where
\begin{align*}
  &a(x)=x(1+\frac{(x-y)^3}{2})-(x-y)(1+\frac{x^3}{2}) \,\,\,,\,\, \,b(x)=2y^2-4xy \\
  & c(x)=2+(x-y)^3-x^2(x-y)\,\,\,,\,\,\, d(x)=x(x-y)^2-2-x^3
\end{align*}
It is easy to see that for $x<0$ and $y>0$, we have $a(x)>0$ and $b(x)>0$ so that $$\text{Ai}\left(\frac{x^2}{4}\right)\text{Ai}\left(\frac{(x-y)^2}{4}\right)a(x)+\text{Ai}'\left(\frac{x^2}{4}\right)\text{Ai}'\left(\frac{(x-y)^2}{4}\right)b(x)>0.$$
To show that $f'$ is positive, we still have to show that $$\text{Ai}\left(\frac{(x-y)^2}{4}\right)\text{Ai}'\left(\frac{x^2}{4}\right)c(x)+\text{Ai}\left(\frac{x^2}{4}\right)\text{Ai}'\left(\frac{(x-y)^2}{4}\right)d(x)>0.$$ To do so, we decompose:
\begin{align*}
    &\text{Ai}\left(\frac{(x-y)^2}{4}\right)\text{Ai}'\left(\frac{x^2}{4}\right)c(x)+\text{Ai}\left(\frac{x^2}{4}\right)\text{Ai}'\left(\frac{(x-y)^2}{4}\right)d(x)\\&=(-2x^2y+3xy^2-y^3) \text{Ai}\left(\frac{(x-y)^2}{4}\right)\text{Ai}'\left(\frac{x^2}{4}\right)\\
    &+(-2x^2y+xy^2)\text{Ai}\left(\frac{x^2}{4}\right)\text{Ai}'\left(\frac{(x-y)^2}{4}\right)\\
    &+ 2\left(\text{Ai}\left(\frac{(x-y)^2}{4}\right)\text{Ai}'\left(\frac{x^2}{4}\right)-\text{Ai}\left(\frac{x^2}{4}\right)\text{Ai}'\left(\frac{(x-y)^2}{4}\right)\right).
\end{align*}
On $\mathbb{R}_+$, Ai is positive and Ai' negative, so that we easily see that the first two terms are positive (remember that $x$ is negative), and just have to show that the last term is also positive. Since $x$ is negative and $y$ positive, we have $(x-y)^2\geq x^2$ and we are reduced to show that:
\begin{equation}\label{inegalite Airy}
\forall u\geq v\geq0, \text{  Ai}(u)\text{Ai}'(v)-\text{Ai}(v)\text{Ai}'(u)>0.    
\end{equation}
We thus introduce, for any constant $c\geq 0$:
$$F_c(v):=\text{Ai}(v+c)\text{Ai}'(v)-\text{Ai}(v)\text{Ai}'(v+c).$$
Since $\text{Ai}''(x)=x\text{Ai}(x)$ we have for all $v\geq 0$:
$$\partial_vF_c(v)=-c\text{Ai}(v+c)\text{Ai}(v)\leq 0,$$ 
so that we only have to prove that $F_c(v)$ is positive for large $v$ where we can use the asymptotics \eqref{asymptotique Ai} and \eqref{asymptotique Ai prime} to get:
$$F_c(v)\underset{v\to +\infty}{\sim}\frac{\mathrm{e}^{-\frac{2}{3}(v+c)^{3/2}}\mathrm{e}^{-\frac{2}{3}v^{3/2}}}{4\pi}\left( \frac{(v+c)^{1/4}}{v^{1/4}}-\frac{v^{1/4}}{(v+c)^{1/4}}\right)   $$
which is indeed positive for any $c\geq 0$.
\end{proof}
 
We end this section with results about integrals involving the $p_1$ function which will help us to control the expectation and variance of $X_p$ in Section \ref{section 1+epsilon} and to approximate the predictable compensator (Lemma \ref{lemmeapproximation compensateur}) in Section \ref{sectionpreuvecv}. 
\noindent For $x\geq 0$ we define: 
\begin{equation}\label{deffonctionI1}
    I_1(x):=\int_{\mathbb{R}_+}\frac{p_1(-x-y)}{p_1(-x)}\frac{dy}{\sqrt{2\pi y}},
\end{equation}
\begin{equation}\label{deffonctionI2}
  I_2(x):=\int_{\mathbb{R}_+}y\frac{p_1(-x-y)}{p_1(-x)}\frac{dy}{2\sqrt{2\pi y}}
\end{equation}
and
\begin{equation}
    I_3(x):=\int_{\mathbb{R}_+}y^2\frac{p_1(-x-y)}{p_1(-x)}\frac{dy}{2\sqrt{2\pi y}}
\end{equation}
We will also have to restrict ourselves to jumps smaller than $-x_{\text{max}}$, which prompts us to define:
\begin{equation}
I_1^{(x_{\text{max}})}(x):= \int_0^{-x_{\text{max}}}\frac{p_1(-x-y)}{p_1(-x)}\frac{dy}{\sqrt{2\pi y}}.  
\end{equation}

\begin{lemme}\label{lemmeapproxintegraleprecise} There exists $x_0\geq 0$, $C_1>0$, $C_2>0$ and $C_3>0$ such that for all $x\geq x_0$ we have:
$$\left| I_1(x)-\frac{1}{x} \right| \leq \frac{C_1}{x^4}, \,\,\, \,\,\,I_2(x)\leq \frac{C_2}{x^3} \,\,\,,\,\,I_3(x)\leq \frac{C_3}{x^5} \;\; \text{and }\; I_1^{(x_{\text{max}})}(x)\geq \frac{1}{2x}. $$
\end{lemme}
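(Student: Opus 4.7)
All four estimates follow from a Laplace-type analysis using the asymptotic expansion \eqref{développement asymptotique p1} of $p_1$ at $-\infty$. The heuristic is standard: for $x$ large, the cubic exponent $((x+y)^3 - x^3)/6$ concentrates the mass of each integrand near $y \sim 1/x^2$, and after the substitution $u = x^2 y/2$ the integrals reduce to Gamma-function values of orders $x^{-1}$, $x^{-3}$, $x^{-5}$ respectively.

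First I would plug \eqref{développement asymptotique p1} into both the numerator and the denominator to obtain, for some $x_0$ and all $x \geq x_0$, $y \geq 0$,
$$\frac{p_1(-x-y)}{p_1(-x)} = \exp\!\left(-\frac{(x+y)^3 - x^3}{6}\right)\sqrt{\frac{x+y}{x}}\,\bigl(1 + r(x,y)\bigr),\qquad |r(x,y)| \leq \frac{C}{x^3},$$
uniformly in $y\geq 0$ (since $x+y \geq x \geq x_0$, the $O(z^{-3})$ remainder in \eqref{développement asymptotique p1} is uniformly controlled at both endpoints of the ratio). Expanding the cubic as $3x^2 y + 3xy^2 + y^3$ separates the Gaussian-like factor $e^{-x^2 y/2}$, which drives the leading behaviour, from the subleading correction $e^{-xy^2/2 - y^3/6}\sqrt{1+y/x}$.

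Then I would change variables $u = x^2 y/2$, which sends $dy/\sqrt{2\pi y}$ to $du/(x\sqrt{\pi u})$ and yields, for the leading part,
$$\int_0^\infty e^{-u}\,\frac{du}{x\sqrt{\pi u}} = \frac{1}{x},\qquad \int_0^\infty \frac{\sqrt{2u}}{x}\,e^{-u}\,\frac{du}{2\sqrt{2\pi}}\cdot \frac{2}{x^2} = \frac{1}{2x^3},$$
and a similar $3/(4x^5)$ for $I_3$. Expressed in $u$, the correction factor reads $\exp(-2u^2/x^3 - 4u^3/(3x^6))\sqrt{1+2u/x^3}(1+r)$, which in the range $u \leq x^{3/2}/2$ equals $1 + O((1+u+u^2)/x^3)$; integrating against the Gamma kernel $e^{-u} u^{-1/2}du$ turns each such $1/x^3$ factor into a contribution of order $1/x^4$ for $I_1$ and preserves the upper bounds of orders $1/x^3, 1/x^5$ for $I_2, I_3$.

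The routine technicality is to split the integral at $y = x^{-1/2}$ (equivalently $u = x^{3/2}/2$) and to check that the tail is super-polynomially small: on $y \geq x^{-1/2}$ one has $e^{-x^2 y/2} \leq e^{-x^{3/2}/2}$, so, combined with the uniform bound on $r$, the contribution of this region is negligible compared to any negative power of $x$. For $I_1^{(x_{\text{max}})}$ the same estimate, now applied to $\int_{-x_{\text{max}}}^\infty \cdots dy = I_1(x) - I_1^{(x_{\text{max}})}(x)$, is dominated by $\mathrm{const}\cdot e^{-x^2(-x_{\text{max}})/2}$, which is exponentially small in $x^2$; coupled with $I_1(x) = 1/x + O(x^{-4})$ from the main analysis this gives $I_1^{(x_{\text{max}})}(x) \geq 1/(2x)$ for $x$ large. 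The only genuine obstacle is to make the remainder $r(x,y)$ uniform in $y$ and to patch the main region with the tail estimate cleanly; everything else is straightforward Gamma-integral bookkeeping.
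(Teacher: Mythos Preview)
Your proposal is correct and follows essentially the same Laplace-type analysis as the paper: insert the asymptotic expansion \eqref{développement asymptotique p1} into the ratio $p_1(-x-y)/p_1(-x)$, isolate the dominant factor $e^{-x^2y/2}$, and rescale to reduce to explicit Gamma/Gaussian integrals. Your substitution $u=x^2y/2$ and the paper's $u=x\sqrt{y}$ are related by $u\mapsto u^2/2$, so the resulting computations are identical.

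The only organisational difference is how the subleading factor $e^{-xy^2/2-y^3/6}\sqrt{1+y/x}$ is handled. You keep it, Taylor-expand on the main region $y\le x^{-1/2}$, and dispose of the tail $y\ge x^{-1/2}$ by the crude exponential bound $e^{-x^2y/2}\le e^{-x^{3/2}/2}$. The paper instead avoids any splitting by using global one-sided inequalities: for the upper bounds it simply drops $e^{-xy^2/2-y^3/6}\le 1$ and uses $\sqrt{1+y/x}\le 1+y/x$, while for the lower bound on $I_1$ it uses $e^{-a}\ge 1-a$ to get $e^{-xy^2/2-y^3/6}\ge 1-xy^2/2-y^3/6$, both valid on all of $\mathbb{R}_+$. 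This makes the paper's argument slightly shorter (no tail to patch), whereas your version is more systematic and would more easily yield higher-order expansions if needed. Either route gives the stated bounds.
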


\begin{proof}
    We begin by showing that for a constant $C_1$ and $x$ large enough we have:
    \begin{equation}\label{inégalité I1}
     I_1(x)\leq \frac{1}{x}+\frac{C_1}{x^4}.   
    \end{equation}
From the asymptotic expansion \eqref{développement asymptotique p1} for $p_1$ there exists $x_0\geq 0$ such that for all $x\geq x_0$ and $y\geq 0$ we have:
\begin{align}
\frac{p_1(-x-y)}{p_1(-x)}&\leq \mathrm{e}^{-(x+y)^3/6+x^3/6}\frac{\sqrt{x+y}+\frac{1}{2\left|x+y\right|^{5/2} }}{\sqrt{x}}\notag\\
&\leq \mathrm{e}^{-x^2y/2}\frac{\sqrt{x+y}+\frac{1}{2x^{5/2} }}{\sqrt{x}}.\label{inegratiop1}
\end{align}
In particular for $x\geq x_0$: 
$$I_1(x)\leq J_1(x)+J_2(x)$$
with $$J_1(x)=\int_{\mathbb{R}_+} e^{-x^2y/2}\sqrt{1+\frac{y}{x}}\frac{dy}{\sqrt{2\pi y}} \,\,\text{ and  } \,\,J_2(x)=\frac{1}{x^3}\int_{\mathbb{R}_+} e^{-x^2y/2}\frac{dy}{2\sqrt{2\pi y}}.$$
We perform the change of variables $u=x\sqrt{y}$ in both integrals and get:
\begin{align*}
J_1(x)&=\frac{1}{x}\sqrt{\frac{2}{\pi}}\int_{\mathbb{R}_+}\mathrm{e}^{-u^2/2}\sqrt{1+\frac{u^2}{x^3}}du\\
&\leq \frac{1}{x}\sqrt{\frac{2}{\pi}}\int_{\mathbb{R}_+}\mathrm{e}^{-u^2/2}\left(1+\frac{u^2}{x^3}\right)du\\
&\leq \frac{1}{x}+\frac{1}{x^4}
\end{align*}
and
$$J_2(x)=\frac{1}{x^4}\int_{\mathbb{R}_+}\mathrm{e}^{-u^2/2}\frac{du}{\sqrt{2\pi}}=\frac{1}{2x^4}$$
which gives the upper bound \eqref{inégalité I1} for $I_1$.

\noindent We now prove the lower bound
\begin{equation}\label{lowerbound I1}
I_1(x)\geq \frac{1}{x}-\frac{C_1}{x^4}    
\end{equation}
 for a constant $C_1$ and $x$ large enough. The proof is similar to the previous one, we use \eqref{développement asymptotique p1} and get the existence of $x_0\geq 0$ such that for all $x\geq x_0$:
\begin{align}
\frac{p_1(-x-y)}{p_1(-x)}&\geq \mathrm{e}^{-(x+y)^3/6+x^3/6}\frac{\sqrt{x+y}}{\sqrt{x}+\frac{1}{x^{5/2}}}\notag\\
&\geq \mathrm{e}^{-x^2y/2-xy^2/2-y^3/6}\frac{\sqrt{x}}{\sqrt{x}+\frac{1}{x^{5/2}}}\notag\\
&\geq \mathrm{e}^{-x^2y/2}\left(1-\frac{xy^2}{2}-\frac{y^3}{6}\right)\left(1-\frac{1}{x^3}\right),\notag
\end{align}
so that for all $x\geq x_0$ we have:
$$I_1(x)\geq \left(1-\frac{1}{x^3}\right)\int_{\mathbb{R}_+}\mathrm{e}^{-x^2y/2}\left(1-\frac{xy^2}{2}-\frac{y^3}{6}\right)\frac{dy}{\sqrt{2\pi y}}.$$
We perform the same change of variables as before $u=x\sqrt{y}$ and get:
\begin{align*}
    I_1(x)&\geq \sqrt{\frac{2}{\pi}}\frac{1}{x}\left(1-\frac{1}{x^3}\right)\int_{\mathbb{R}_+}\mathrm{e}^{-u^2/2}\left(1-\frac{u^4}{2x^3}-\frac{u^6}{6x^6}\right)du\\
    &\geq \frac{1}{x}-\frac{5}{2x^4}-\frac{15}{6x^6}+\frac{3}{2x^7}+\frac{15}{6x^{10}},
\end{align*}
which gives the lower bound \eqref{lowerbound I1} for $x$ large enough.

\noindent The proof of the result for $I_2$ is similar, we show that $$I_2(x)\leq \frac{C_2}{x^3} $$ for a constant $C_2>0$ and $x$ large enough.
We first use \eqref{inegratiop1} to get the existence of $x_0\geq 0$ such that for all $x\geq x_0$ and $y\geq 0$ we have:
\begin{align*}
I_2(x)&\leq \int_{\mathbb{R}_+}y \mathrm{e}^{-x^2y/2}\left(\sqrt{1+\frac{y}{x}}+\frac{1}{2x^3}\right)\frac{dy}{2\sqrt{2\pi y}}\\
&\leq \frac{1}{x^3}\int_{\mathbb{R}_+}u^2\mathrm{e}^{-u^2/2}\left(1+\frac{u^2}{x^3}+\frac{1}{2x^3}\right)\frac{du}{\sqrt{2\pi}}
\end{align*}
which gives the result. The proof for $I_3$ is exactly the same, and we do not detail it.

\noindent The proof of the claim for $I_1^{(x_{\text{max}})}$ is similar: thanks to \eqref{développement asymptotique p1} we find $x_0\geq 0$ such that for all $x\geq x_0$ and $y\geq 0$ we have:
\begin{align*}
    \frac{p_1(-x-y)}{p_1(-x)}&\geq \frac{3}{4}\mathrm{e}^{-x^2y/2-xy^2/2-y^3/6}\left(1+\frac{y}{x}\right)^{1/2}\\
    &\geq  \frac{3}{4}\mathrm{e}^{-x^2y/2}\left(1-xy^2/2-y^3/6\right)
\end{align*}
so that for all $x\geq x_0$ we have:
\begin{align*}
    I_1^{(x_{\text{max}})}(x)&\geq \frac{3}{4}\int_0^{-x_{\text{max}}} \mathrm{e}^{-x^2y/2}\left(1-\frac{xy^2}{2}-\frac{y^3}{6}\right) \frac{dy}{\sqrt{2\pi y}}\\
    &\geq \frac{3}{4x}\int_0^{(-x_{\text{max}})x}\mathrm{e}^{-u^2/2}\left(1-\frac{u^4}{2x^3}-\frac{u^6}{6x^6}\right)\frac{\sqrt{2}du}{\sqrt{\pi}}\\
    &\geq \frac{5}{8x}\int_0^{(-x_{\text{max}})x}\mathrm{e}^{-u^2/2}\sqrt{\frac{2}{\pi}}du
\end{align*}
where we take $x$ to be even larger than before, if necessary to make the terms in $\frac{1}{x^3}$ and $\frac{1}{x^6}$ small. We can conclude since $(-x_{\text{max}})x$ goes to infinity when $x$ goes to infinity.
\end{proof}

\subsection{Background on $X_p$}
We now turn to the process $X_p$: we first give a construction of the process $X_p$ in terms of a Poisson point process that we will widely use all along the article. We have the following proposition (a sketch of proof is given for $p=\frac{1}{2}$ in \cite{ConCurParking}, and a rigorous proof is given for general parameters in \cite{ManuscritThèse}):
\begin{proposition} There exists a Poisson point process $\Pi$ over $\mathbb{R}_+\times \mathbb{R}_+\times \mathbb{R}_+ $ with (infinite) measure intensity 
$\frac{1}{2}\frac{dy}{\sqrt{2\pi y^3}}dz\,ds,$ such that for all $t\geq 0$:
\begin{equation}\label{Ecriture entermesde PPP}
    X_p(t) = X_p(0) + \sum_{0\leq S_i\leq t}Y_i \mathrm{1}_{Z_i\leq (Y_i+2pX_p(S_i-))\frac{p_1(S_i-X_p(S_i-)-Y_i)}{p_1(S_i-X_p(S_i-))}},
\end{equation}   
where $(Y_i,Z_i,S_i)$ are the atoms of the process $\Pi$.
\end{proposition}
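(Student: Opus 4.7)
The approach is the standard thinning construction for a Markov jump process with prescribed jump kernel. On a probability space supporting a Poisson point process $\Pi$ of intensity $\frac{1}{2}\frac{dy}{\sqrt{2\pi y^3}}\,dz\,ds$ on $\mathbb{R}_+^3$ (independent of $X_p(0)$), I would define a càdlàg process $\tilde X$ by using the right-hand side of \eqref{Ecriture entermesde PPP} with the threshold evaluated at the running value of $\tilde X$ itself, and show that $\tilde X$ has the same law as $X_p$. The kernel identification is immediate from the colouring/projection formula for Poisson processes: conditionally on $\tilde X(t-)=x$, the rate at which an atom of $\Pi$ produces an accepted jump of size $dy$ is
$$\left(\int_0^\infty\mathbf{1}_{z\le(y+2px)\frac{p_1(t-x-y)}{p_1(t-x)}}\,dz\right)\frac{dy}{2\sqrt{2\pi y^3}}=(y+2px)\frac{p_1(t-x-y)}{p_1(t-x)}\frac{dy}{2\sqrt{2\pi y^3}}=\mathbf{n}_p(t,x,dy),$$
which is exactly the kernel characterising $X_p$. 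By uniqueness in law of the inhomogeneous Feller process with this generator (established in \cite{ConCurParking,ManuscritThèse}), $\tilde X$ and $X_p$ then agree in distribution.

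The only delicate point is the well-definedness of $\tilde X$, because $\Pi$ has infinitely many atoms in every time interval: the measure $\frac{dy}{\sqrt{y^3}}$ has infinite mass near $0$ and the $z$-coordinate ranges over all of $\mathbb{R}_+$. To handle this I would introduce a double cut-off, fixing $T,\epsilon,M>0$ and keeping only atoms with $Y_i\ge\epsilon$ and $Z_i\le M$; on this locally finite set of atoms a process $\tilde X^{\epsilon,M}$ is constructed by straightforward iteration between consecutive atoms. The truncation in $z$ is harmless as soon as $M$ dominates the threshold $A(t,x,y):=(y+2px)p_1(t-x-y)/p_1(t-x)$ along the trajectory, while removing the truncation in $y$ relies on the integrability estimate
$$\mathbb{E}\!\int_0^T\!\!\int_0^{\epsilon} y\,\mathbf{n}_p(s,\tilde X(s),dy)\,ds = O(\sqrt{\epsilon}),$$
coming from the cancellation $y\cdot y^{-3/2}=y^{-1/2}$, which yields almost sure convergence $\tilde X^{\epsilon,M}\to\tilde X$ as $(\epsilon,M^{-1})\to 0$ and identifies the limit as a solution of \eqref{Ecriture entermesde PPP}.

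The main obstacle is therefore the uniform control of $A(t,x,y)$ along the trajectory. This reduces to bounding the ratio $p_1(t-x-y)/p_1(t-x)$ over the relevant range of $(t,x,y)$, which follows by combining the asymptotics \eqref{développement asymptotique p1}--\eqref{equivalentp1+infini} with the positivity and continuity of $p_1$ to obtain a locally uniform bound of the form $A(t,x,y)\le C(T,K)(y+x)$ on $\{t\le T,\,x\le K,\,y\ge 0\}$. Together with a Grönwall-type domination of $\tilde X^{\epsilon,M}$ by a jump process with finite Lévy kernel $\mathbf{1}_{y\ge\epsilon}\mathbf{n}_p$ to guarantee local boundedness, this closes the argument and yields the stated Poissonian representation of $X_p$.
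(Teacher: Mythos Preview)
The paper does not actually prove this proposition: immediately after the statement it says that ``a sketch of proof is given for $p=\frac{1}{2}$ in \cite{ConCurParking}, and a rigorous proof is given for general parameters in \cite{ManuscritThèse}'', and then moves on. So there is nothing in the paper to compare your argument against.

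That said, your proposal is the standard thinning/Poisson-embedding construction for a jump Markov process with given kernel, and it is correct in outline. The kernel identification via integrating out the $z$-variable is exactly right, and the $\epsilon$-truncation in $y$ together with the $O(\sqrt{\epsilon})$ estimate on the contribution of small jumps is the natural way to make sense of the sum despite the accumulation of atoms. One minor comment: your bound $A(t,x,y)\le C(T,K)(y+x)$ is actually weaker than what holds --- for bounded $t,x$ the threshold $A(t,x,y)$ is in fact uniformly bounded in $y$, since $p_1(t-x-y)$ decays like $e^{-|y|^3/6}$ for large $y$ by \eqref{développement asymptotique p1} while $p_1(t-x)$ is bounded away from $0$ on compacts; this makes the $M$-truncation step even easier than you suggest. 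The only genuinely circular-looking point is that your local boundedness of the trajectory (needed to apply the bound on $A$) relies on non-explosion, which in the paper is established in Proposition~\ref{proposition aperiodique} \emph{after} the Poisson representation; but the non-explosion argument there uses only the finiteness of $\int_0^t\int y\,\mathbf{n}_p(s,x,dy)\,ds$ for fixed $x$, which is independent of the representation, so there is no actual circularity.
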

\noindent We finish with a last claim on $X_p$, and refer to \cite{MeynTweedie}, Chapters 4 and 20 for definitions.
\begin{proposition}\label{proposition aperiodique}
    For all $p\in [0,1]$ the process $X_p$ is a non-explosive, aperiodic, irreducible Feller process.
\end{proposition}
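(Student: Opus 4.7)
The plan is to verify the four properties in turn, using the Poisson point process construction \eqref{Ecriture entermesde PPP} together with the analytic estimates on $p_1$ from Section \ref{section préliminaires}.

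For the \emph{Feller property}, I would use a Poisson coupling: drive two trajectories started from close initial data $(t_0,x_0)$ and $(t_0',x_0')$ by the same atom configuration $(Y_i,Z_i,S_i)$ of $\Pi$ and compare them through the disagreement probability of the thinning indicators. Since $p_1$ is smooth and strictly positive on $\mathbb{R}$, the density $(y+2px)\,p_1(t-x-y)/p_1(t-x)$ depends continuously on $(t,x)$, so these indicators coincide for all but a small fraction of atoms in any bounded window. A routine dominated convergence argument then yields continuity of $(t_0,x_0)\mapsto \mathbb{E}\left[f(X_p(t_0+s))\mid X_p(t_0)=x_0\right]$ for bounded continuous $f$ and fixed $s\geq 0$.

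For \emph{non-explosion}, observe that when $p>0$ the intensity $\mathbf{n}_p(t,x,dy)$ is non-integrable at $y=0$ (through the $2px\cdot y^{-3/2}$ factor), so the process has infinite jump activity. However $y\,\mathbf{n}_p(t,x,dy)$ is integrable near zero, since $y\cdot y^{-3/2}=y^{-1/2}$, and the asymptotic expansions \eqref{développement asymptotique p1}--\eqref{equivalentp1+infini} bound the rate of jumps of size $\geq 1$ uniformly on compact sets of $(t,x)$. A standard compensation argument for pure-jump Markov processes of Lévy type then gives $X_p(t)<\infty$ a.s.\ for every $t$, which is exactly non-explosion.

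For \emph{irreducibility} and \emph{aperiodicity} in the Meyn--Tweedie sense, I would work with the space-time pair $(t,X_p(t))$, which is the natural time-homogeneous Markov object. The jump measure in $y$ admits a strictly positive smooth density on $(0,\infty)$ because $p_1>0$ everywhere and $y+2px>0$, so starting from $(t_0,x_0)$ the law of $X_p(t)$ for any $t>t_0$ has positive density on $(x_0,\infty)$. Taking Lebesgue measure on the forward cone $\{(t,x):t>t_0,\,x\geq x_0\}$ as the irreducibility measure $\varphi$ yields $\varphi$-irreducibility, and aperiodicity follows from the continuous-time flow between jumps, which forbids the return times to any small set from concentrating on a lattice. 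The main technical obstacle I anticipate is the infinite-activity character of the jump measure when $p>0$, combined with the possible blow-up of $p_1(t-x-y)/p_1(t-x)$ as $x\uparrow t$; handling this requires adapting the estimates of Lemma \ref{lemmeapproxintegraleprecise} and using a truncation-and-limit argument to reduce to finite-activity approximations where the Meyn--Tweedie small-set conditions are straightforward to verify.
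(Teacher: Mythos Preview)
Your approach is broadly correct and close in spirit to the paper's, but the emphasis is distributed differently and one of your anticipated difficulties is a phantom.

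For non-explosion, the paper does exactly what you sketch: it shows $\int_0^t\int_{\mathbb{R}_+} y\,\mathbf{n}_p(s,x,dy)\,ds<\infty$ for fixed $(t,x)$. Rather than invoking a generic compensation argument, it splits the $y$-integral at $y=t-x_{\max}$: for small $y$ the ratio $p_1(s-x-y)/p_1(s-x)$ is bounded by compactness; for large $y$ it uses that $p_1$ is decreasing on $(-\infty,x_{\max}]$ to control the numerator by $p_1(t-y)$, and then the asymptotic \eqref{développement asymptotique p1} makes the tail integrable. This is more explicit than your sketch but the same idea.

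For irreducibility and aperiodicity, the paper is content with a one-line remark that the jump density in $y$ is strictly positive on all of $\mathbb{R}_+$. Your space-time reformulation is more careful (and arguably more honest, since $X_p$ is monotone), but the paper does not go that far here; the Meyn--Tweedie machinery is only invoked later for the homogeneous process $Y(t)=X_0(t)-t$.

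The paper does not prove the Feller property in this proposition at all; it is inherited from \cite{ConCurParking} and \cite{ManuscritThèse}. Your coupling sketch is therefore extra, not a comparison point.

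Finally, your flagged obstacle---``possible blow-up of $p_1(t-x-y)/p_1(t-x)$ as $x\uparrow t$''---is not real: $p_1$ is continuous and strictly positive on all of $\mathbb{R}$, so the ratio is perfectly finite and continuous at $t-x=0$. The only regime where the ratio becomes large is $t-x\to+\infty$, which is exactly what the tail expansion \eqref{développement asymptotique p1} handles. You can drop the truncation-and-limit programme.
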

\begin{proof}
Irreducibility and aperiodicity are both consequences of the fact that the support of the jump measure is the whole space $\mathbb{R}_+$ since the density with respect to Lebesgue measure of the jump-measure is everywhere positive.

\noindent The non-explosivity is a consequence of the fact that for all $x\geq 0$ and $t\in \mathbb{R}$ we have
$$\int_{0}^t \int_{\mathbb{R}_+} y\mathbf{n}_p(s,x,dy)ds<+\infty.$$
To see this we decompose
\begin{equation}\label{non-explosivité}
\int_{\mathbb{R}_+} y\mathbf{n}_p(s,x,dy)=\int_{0}^{t-x_{\max}} y\mathbf{n}_p(s,x,dy)+\int_{t-x_{\max}}^{+\infty} y\mathbf{n}_p(s,x,dy).
\end{equation}
For the first integral, we notice that for fixed $x\geq 0$, for $s\in [0,t]$ and $y\in [0,t-x_{\max}]$, by compactness, the ratio $\frac{p_1(s-x-y)}{p_1(s-x)}$ is bounded by a constant $C_1$ only depending on $t$ and $x$ so that
\begin{align*}
  \int_{0}^{t-x_{\max}} y\mathbf{n}_p(s,x,dy)\leq C_1\int_{0}^{t-x_{\max}}(y+2px)\frac{dy}{2\sqrt{2\pi y}}\leq \Tilde{C}_1(t,x)<+\infty
\end{align*}
where $\Tilde{C}_1(t,x)$ is also only depending on $t$ and $x$.

\noindent For the second integral in \eqref{non-explosivité}, we have $s-x-y\leq t-y\leq x_{\max}$ and $p_1$ is decreasing on $(-\infty,x_{\max}]$ so that $$p_1(s-x-y)\leq p_1(t-y).$$ Furthermore since $-x\leq s-x\leq t$ we have $p_1(s-x)\geq \min\left(p_1(-x),p_1(t)\right)$, so that
\begin{align*}
   \int_{t-x_{\max}}^{+\infty} y\mathbf{n}_p(s,x,dy)&\leq \frac{1}{\min\left(p_1(-x),p_1(t)\right)}\int_{t-x_{\max}}^{+\infty} (y+2px)p_1(t-y)\frac{dy}{2\sqrt{2\pi y}}\\
   &\leq C_2(t,x)<+\infty
\end{align*}
where $C_2(t,x)$ is a constant only depending on $t$ and $x$ (the convergence of the last integral follows from the asymptotics \eqref{développement asymptotique p1} on $p_1$). 
\end{proof}
\begin{remark}
For $p=0$ the number of jumps on a finite time interval is almost surely finite: indeed in this case for all $x\geq 0$ and $t\in \mathbb{R}$ we have
$$\int_{0}^t \int_{\mathbb{R}_+} \mathbf{n}_0(s,x,dy)ds<+\infty,$$ which prevents the accumulation of small jumps (notice that this is not true for $p>0$ where there is indeed an accumulation).
\end{remark}

\section{The case $p>0$}

We begin with an overview of the proof of Theorem \ref{theoreme convergence p>0}: our strategy is to introduce the predictable compensator of $X_p$ (defined in \eqref{def compensateur}) and to approximate $X_p$ by its compensator. The compensator has the advantage of being easier to work with than the process $X_p$ and we will see (Lemma \ref{lemmeapproximation compensateur}) that it satisfies an equation close to 
$$f(t)=\int_{1}^t \frac{pf(s)}{f(s)-s}ds,$$
whose solution is $f(t)=(1+p)t$. This is close to a "differential equation method" (see e.g. \cite{WormaldDEM}) but the non-homogeneity in both time and space as well as the accumulation of small jumps prevents us from applying an existing theorem (as far as we know). The approximation relies on asymptotics on the $p_1$ function and we first need to show that for $\epsilon>0$ small enough, $X_p(t)\geq (1+\epsilon)t$ almost surely for large $t$ (Proposition \ref{Proposition Xt>(1+eps)t}) in order to use these asymptotics. This is the aim of Section \ref{section 1+epsilon} where we show this crude lower bound.

\subsection{$X_p(t)$ is larger than $(1+\epsilon)t$}\label{section 1+epsilon}
In this section we study the process $X_p$ for $p>0$ but we will just write $X$ instead. We work on a probability space that we note $\left(\Omega,\mathcal{F},\mathbb{P}\right)$. The goal of this section is to prove the following:

\begin{proposition}\label{Proposition Xt>(1+eps)t}
   For $\epsilon\in (0,p)$ small enough, almost surely, we have $X(t)\geq (1+\epsilon)t$ for all $t$ large enough.
\end{proposition}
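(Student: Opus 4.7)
I will use a drift/martingale argument. Using the Poisson construction \eqref{Ecriture entermesde PPP}, decompose
$$X(t) = X(0) + M(t) + A(t),$$
where $M$ is a square-integrable martingale for the filtration generated by $\Pi$ and the compensator has density
\begin{equation*}
A'(s) = G(s) := \int_0^\infty y\,\mathbf{n}_p(s, X(s-), dy) = I_2\bigl(u(s)\bigr) + 2p\,X(s-)\,I_1\bigl(u(s)\bigr),\quad u(s) := X(s-) - s,
\end{equation*}
with quadratic variation $\langle M\rangle_t = \int_0^t \bigl[I_3(u(s)) + 2pX(s-)I_2(u(s))\bigr]\,ds$. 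The heuristic driving the whole proof is that if $X(s)$ sits on the line $(1+\epsilon)s$, then $u(s) = \epsilon s$ and by Lemma~\ref{lemmeapproxintegraleprecise}
$$G(s) \;\approx\; \frac{2p\,X(s-)}{X(s-)-s} \;=\; \frac{2p(1+\epsilon)}{\epsilon},$$
which is strictly larger than $1+\epsilon$ whenever $\epsilon < 2p$. In particular the hypothesis $\epsilon \in (0, p)$ yields a positive drift margin $\delta > 0$ such that $G(s) \geq 1 + \epsilon + 2\delta$, pushing $X$ above the line $(1+\epsilon)s$.

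\textbf{Main argument.} Fix $\epsilon \in (0, p)$ and $K > 1+p$. For a large deterministic $T_0$ define the stopping times
$$\sigma := \inf\{s \geq T_0 : X(s) < (1+\epsilon)s\},\qquad \tau_K := \inf\{s \geq T_0 : X(s) > K s\}.$$
On the event $\{s \leq \sigma \wedge \tau_K\}$ the gap satisfies $u(s) \in [\epsilon s, (K-1)s]$, so Lemma~\ref{lemmeapproxintegraleprecise} yields simultaneously $G(s) \geq 1 + \epsilon + \delta$ (for $T_0$ large) and, since $X(r)\,u(r)^{-3} = O(r^{-2})$, the variance estimate
$$\langle M\rangle_{s \wedge \sigma \wedge \tau_K} - \langle M\rangle_{T_0} \;\leq\; C\int_{T_0}^s r^{-2}\,dr \;\leq\; C/T_0.$$
By Doob's inequality, $\sup_{s \geq T_0} |M(s) - M(T_0)|$ tends to $0$ in probability as $T_0 \to \infty$. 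Combined with $X(T_0) \geq (1+\epsilon)T_0$, the drift margin $\delta$ forbids $\sigma < \infty$ on that event; a Borel--Cantelli argument along the subsequence $T_0 = 2^n$ then upgrades this to the almost sure statement.

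\textbf{Main obstacle.} The argument just sketched is circular: it assumes the a priori two-sided bound $(1+\epsilon)T_0 \leq X(T_0) \leq K T_0$, needed both to apply Lemma~\ref{lemmeapproxintegraleprecise} and to control $\langle M\rangle$. I expect this bootstrapping to be the main difficulty. For the initial lower bound, a rough estimate $X(t) \geq c t$ for some $c > 0$ should follow from the very same drift computation with $\epsilon$ replaced by an arbitrarily small positive number, after first verifying that $u(s) = X(s-) - s \to \infty$ so that Lemma~\ref{lemmeapproxintegraleprecise} becomes applicable; iterating this with successively larger target slopes then takes the slope up to $1+\epsilon$. For the upper cutoff, a symmetric computation shows that when $X(s)/s \geq K$ is large, $G(s) \lesssim 2pK/(K-1) < K$ for $K$ large, so $\mathbb{P}(\tau_K < \infty) \to 0$ as $K \to \infty$. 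Finally the infinite accumulation of small jumps for $p > 0$ is harmless at the level of $M$: both $\int y\,\mathbf{n}_p$ and $\int y^2\,\mathbf{n}_p$ are finite, so the compensated-Poisson-integral definition of $M$ yields a genuine $L^2$ martingale.
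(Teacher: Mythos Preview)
Your compensator/martingale scheme is essentially the machinery the paper reserves for the refined statement $X(t)-(1+p)t\to 0$ in Section~\ref{sectionpreuvecv}, \emph{after} Proposition~\ref{Proposition Xt>(1+eps)t} has been secured. Importing it to prove the proposition itself runs into exactly the circularity you flag, and your proposed bootstrap does not break it. The estimates of Lemma~\ref{lemmeapproxintegraleprecise} for $I_1,I_2,I_3$ are only valid when $u(s)=X(s-)-s$ is large and \emph{positive}; your step ``after first verifying that $u(s)\to\infty$'' is therefore not a preliminary technicality but the proposition itself at the threshold $\epsilon=0^+$, and you give no independent argument for it. Iterating from a rough bound $X(t)\geq ct$ with $c<1$ does not help either, since then $u(s)\sim(c-1)s\to-\infty$ and Lemma~\ref{lemmeapproxintegraleprecise} is still unavailable. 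The upper cutoff $\tau_K$ is a second manifestation of the same problem: you need it only because the compensator estimates demand a two-sided control of $u(s)$.

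The paper bypasses all of this by abandoning the compensator for the crude bound and working directly with the Poisson representation \eqref{Ecriture entermesde PPP}. The key observation is that on the \emph{bad} event (where $s-X(s-)\geq 0$), the unimodality inequality \eqref{inégalitécroissancep1} gives $\frac{p_1(s-X(s-)-y)}{p_1(s-X(s-))}\geq 1$ for every $y\in[0,-x_{\max}]$, a uniform lower bound on the acceptance probability that requires nothing about $u(s)$ being positive. Restricting to jumps $Y_i\in[\tfrac12,-x_{\max}]$ and using the factor $2pX(S_i-)$ (once $X$ exceeds a large constant, Lemma~\ref{proposition X_t plus grand que constante}) then dominates the increment of $X$ from below by an explicit Poisson variable with computable mean; Lemma~\ref{lemme ratio p1} supplies the analogous monotonicity when $s-X(s-)<0$. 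The bad event $\{X(n)\leq(1+\epsilon)n\}$ itself furnishes the upper bound you were seeking via $\tau_K$. In short, the paper's trick is that the jump rate is easiest to bound from below precisely when $X$ is \emph{below} the diagonal---the regime where your compensator argument based on Lemma~\ref{lemmeapproxintegraleprecise} has no purchase.

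A minor point: your expression for $G(s)$ should read $I_2(u)+pX(s-)I_1(u)$ rather than $2pX(s-)I_1(u)$ (compare the proof of Lemma~\ref{lemmeapproximation compensateur}); this does not affect your heuristic since $p(1+\epsilon)/\epsilon>1+\epsilon$ still holds for $\epsilon<p$.
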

\noindent  Our strategy is to prove that for $\epsilon>0$ small enough (we will see later what we call by "$\epsilon$ small enough"):
\begin{equation}\notag
    \sum_{n\geq 0}\mathbb{P}\left(X(n)\leq (1+\epsilon)n\right)<+\infty, 
\end{equation}
so that, almost surely, for $n$ large enough, $X(n)$ is larger than $(1+\epsilon)n$ and Proposition \ref{Proposition Xt>(1+eps)t} will follow since $X$ is increasing (taking $\epsilon/2$ instead of $\epsilon$ for example).

\noindent We first start with an easy but useful result: if $Y$ follows a Poisson distribution of parameter $n\geq 1$, then for every $a<1$, we have:
\begin{equation}\label{Poissonlemme}
 \mathbb{P}\left(Y\leq an\right)\leq \frac{C}{n^2} 
\end{equation}
(this result is far from being optimal and exponential bounds can be obtained). This is a mere consequence of Markov's inequality together with the fact that the fourth centered moment of a Poisson distribution of parameter $n$ is $n(1+3n)$:
$$\mathbb{P}\left(Y\leq an\right)\leq \mathbb{P}\left((Y-n)^4\geq (n-an)^4\right)\leq \frac{\mathbb{E}\left[(Y-n)^4\right]}{(1-a)^4n^4}= \frac{n(1+3n)}{(1-a)^4n^4}.$$

\noindent We first prove that $X$ tends to infinity when $t$ goes to infinity, which will be the "lauching point" to prove Proposition \ref{Proposition Xt>(1+eps)t}.
\begin{lemme}\label{proposition X_t plus grand que constante}
    For any $A>0$, there exists a constant $C=C(A)$ and there exists $n_0$ such that for all $n\geq n_0$ we have:
    $$\mathbb{P}\left(X(n)\leq A\right)\leq \frac{C}{n^2}.$$
\end{lemme}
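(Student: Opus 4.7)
The strategy is to exploit the Poisson point process representation \eqref{Ecriture entermesde PPP} of $X=X_p$ and compare the number of ``medium'' jumps of $X$ (say, of size in $[1,2]$) occurring in $[0,n]$ with a genuine Poisson random variable. Since $X$ is non-decreasing, on the event $\{X(n)\le A\}$ we automatically have $X(t)\le A$ for every $t\in[0,n]$, which allows us to bound the jump rate from below by a deterministic constant once $t$ is large enough. Each accepted medium jump increases $X$ by at least $1$, so $\{X(n)\le A\}$ forces the number of such jumps in $[0,n]$ to be at most $A$; the announced rate $C/n^2$ will then come out of the coarse Poisson deviation bound \eqref{Poissonlemme}.

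The first, and essentially only substantial, step is to produce $t_0=t_0(A)\ge 0$ and $c_1=c_1(A)>0$ such that for all $s\ge t_0$, $x\in[0,A]$ and $y\in[1,2]$,
\begin{equation*}
\bigl(y+2px\bigr)\,\frac{p_1(s-x-y)}{p_1(s-x)}\;\ge\;c_1.
\end{equation*}
The first factor is $\ge 1$, so it suffices to bound the ratio from below. For $s$ large, both $s-x$ and $s-x-y$ are large positive numbers uniformly in $(x,y)\in[0,A]\times[1,2]$, and the polynomial equivalent \eqref{equivalentp1+infini} yields $p_1(s-x-y)/p_1(s-x)\to 1$ uniformly on this compact set as $s\to+\infty$. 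One may then take, e.g., $c_1=1/2$ and $t_0$ large enough for the ratio to be $\ge 1/2$ on the stated range.

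Next, let $N_n$ be the number of atoms $(Y_i,Z_i,S_i)$ of the driving PPP $\Pi$ satisfying $Y_i\in[1,2]$, $Z_i\le c_1$ and $S_i\in[t_0,n]$. Since $\Pi$ has intensity $\frac{1}{2\sqrt{2\pi y^3}}\,dy\,dz\,ds$, the variable $N_n$ is Poisson with mean
\begin{equation*}
\mu_n\;=\;c_1\,(n-t_0)\int_1^2\frac{dy}{2\sqrt{2\pi y^3}}\;=:\;c'(n-t_0),\qquad c'>0.
\end{equation*}
On the event $\{X(n)\le A\}$, the monotonicity of $X$ gives $X(S_i-)\le A$ for every such atom, and by the first step the acceptance condition in \eqref{Ecriture entermesde PPP} is then automatically satisfied, so each such atom triggers a jump of $X$ of size $\ge 1$. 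Consequently $X(n)\ge N_n$ on that event, hence $\{X(n)\le A\}\subset\{N_n\le A\}$.

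To conclude, fix $n_0$ so large that $c'(n_0-t_0)\ge 2A$. For $n\ge n_0$ we have $A\le \mu_n/2$, and the Poisson tail estimate \eqref{Poissonlemme} applied with $a=1/2$ gives
\begin{equation*}
\mathbb{P}\bigl(X(n)\le A\bigr)\;\le\;\mathbb{P}(N_n\le A)\;\le\;\mathbb{P}\bigl(N_n\le \mu_n/2\bigr)\;\le\;\frac{C}{\mu_n^{\,2}}\;\le\;\frac{C(A)}{n^2}.
\end{equation*}
The only genuinely non-trivial input is the uniform lower bound on $p_1(s-x-y)/p_1(s-x)$ for bounded $x$ and moderate $y$; everything else is a direct application of Poisson point process thinning together with the coarse Poisson deviation bound already at our disposal.
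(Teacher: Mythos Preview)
Your proof is correct and follows essentially the same route as the paper: bound the acceptance probability from below on the event $\{X(n)\le A\}$, restrict to medium-sized jumps, and apply the Poisson deviation bound \eqref{Poissonlemme}. The only cosmetic difference is how the ratio $p_1(s-x-y)/p_1(s-x)$ is controlled: the paper restricts to $S_i\in(A,n]$ and $Y_i\in[\tfrac12,-x_{\max}]$ so that $S_i-X(S_i-)\ge 0$ and the unimodality inequality \eqref{inégalitécroissancep1} applies directly, whereas you push $s$ far enough to the right and invoke the polynomial tail \eqref{equivalentp1+infini}; both yield a uniform positive lower bound and the remainder of the argument is identical.
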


\begin{proof}
On the event $\{X(n)\leq A\}$ we have for all $n\geq A$:
\begin{align}
    A\geq X(n)&\geq \sum_{A<S_i\leq n}Y_i \mathrm{1}_{Z_i\leq (Y_i+2pX(S_i-))\frac{p_1(S_i-X(S_i-)-Y_i)}{p_1(S_i-X(S_i-))}}\notag
\end{align}
where $(Y_i,Z_i,S_i)$ are the atoms of the Poisson point process $\Pi$ whose intensity is $\frac{dy}{2\sqrt{2\pi y^3}}dz\,ds$ on $\mathbb{R}_+\times \mathbb{R}_+\times\mathbb{R}_+$. For $A<S_i\leq n$ we thus have $S_i-X(S_i-)\geq 0$ so that if we take $Y_i\in [\frac{1}{2},-x_{\text{max}}]$, inequality \eqref{inégalitécroissancep1} gives:
$$Y_i\frac{p_1(S_i-X(S_i-)-Y_i)}{p_1(S_i-X(S_i-)}\geq \frac{1}{2}.$$
On $\{X(n)\leq A\}$ we thus have:
\begin{align*}
    A\geq X(n)&\geq \sum_{A<S_i\leq n}Y_i \mathrm{1}_{Z_i\leq \frac{1}{2}}\mathrm{1}_{Y_i\in [\frac{1}{2},-x_{\text{max}}]}\notag\\
    &\geq \frac{1}{2}\sum_{A<S_i\leq n} \mathrm{1}_{Z_i\leq \frac{1}{2}}\mathrm{1}_{Y_i\in [\frac{1}{2},-x_{\text{max}}]}=:\Sigma_{A,n}\notag
\end{align*}
Since $(Y_i,Z_i,S_i)$ are the atoms of $\Pi$, the random variable $2\Sigma_{A,n}$ follows a Poisson distribution of parameter $B(n-A)$ with
$B=\frac{1}{2}\int_{1/2}^{-x_{\text{max}}}\frac{dy}{2\sqrt{2\pi y^3}}>0,$ and we conclude with \eqref{Poissonlemme}.
\end{proof}
\noindent As an immediate corollary $X$ tends to infinity almost surely, and we prove now
that $X$ is almost surely asymptotically larger than $(1+\epsilon)t$. We define:
\begin{equation}\label{def c(epsilon)}
    c(\epsilon)=\frac{1}{2}\left(1+\frac{1+\epsilon}{1+p}\right) \quad \text{and} \quad d(\epsilon)=\frac{1}{2}\left(c(\epsilon)+1\right)
\end{equation}
\begin{remark}
    We have the following inequalities:
    $$\frac{1+\epsilon}{1+p}<c(\epsilon)<d(\epsilon)<1.$$
\end{remark}

\noindent
We introduce:
\begin{equation}
    T_n:=\inf\{s\geq c(\epsilon)n: X(s)>s\}.\notag
\end{equation}
(with the convention $\inf\{\emptyset\}=+\infty$) the first time over the diagonal $t\mapsto t$ after time $c(\epsilon)n$. We decompose $\mathbb{P}(X(n)\leq n)$ according whether $T_n\leq d(\epsilon)n$ or not:
\begin{align}\label{deuxieme decomposition 1}
   \mathbb{P}(X(n)\leq (1+\epsilon)n)&=\mathbb{P}(X(n)\leq (1+\epsilon)n, T_n>d(\epsilon)n)+\mathbb{P}(X(n)\leq (1+\epsilon)n, T_n\leq d(\epsilon)n). 
\end{align}

\begin{lemme}\label{Lemme 3 Xt>t}
    There exists a constant $A>0$ such that, for all $n$ large enough:
    $$\mathbb{P}(X(n)\leq (1+\epsilon)n, T_n>d(\epsilon)n)\leq \frac{A}{n^2} $$
\end{lemme}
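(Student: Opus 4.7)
The plan is to split the event $\{X(n)\leq(1+\epsilon)n,\,T_n>d(\epsilon)n\}$ according to whether $X(c(\epsilon)n)\geq A$, for a constant $A>0$ to be tuned later. Lemma \ref{proposition X_t plus grand que constante} applied at time $c(\epsilon)n$ immediately handles the complementary event, giving $\mathbb{P}(X(c(\epsilon)n)<A)\leq C(A)/n^{2}$ for $n$ large enough. The remaining task is to bound $\mathbb{P}(E_{n})$, where $E_{n}:=\{X(c(\epsilon)n)\geq A,\,T_{n}>d(\epsilon)n\}$; the point here is to exploit the linear-in-$X$ term $2pX(s-)$ in the jump kernel — a factor that was harmlessly dropped in the proof of Lemma \ref{proposition X_t plus grand que constante} because $X$ was only known to be of constant size.

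The key deterministic observation is that on $E_n$, monotonicity of $X$ together with the definitions of $E_n$ and $T_n$ give $A\leq X(s-)\leq s$ for every $s\in[c(\epsilon)n,d(\epsilon)n]$. Hence for any atom $(Y_i,Z_i,S_i)$ of $\Pi$ with $S_i\in[c(\epsilon)n,d(\epsilon)n]$, $Y_i\in[\tfrac{1}{2},-x_{\text{max}}]$ and $Z_i\leq Y_i+2pA$, inequality \eqref{inégalitécroissancep1} (applied at $x=s-X(s-)\geq 0$ and $y=Y_i\leq -x_{\text{max}}$) yields
\begin{equation*}
Z_i\leq Y_i+2pA\leq \bigl(Y_i+2pX(S_i-)\bigr)\frac{p_1(S_i-X(S_i-)-Y_i)}{p_1(S_i-X(S_i-))},
\end{equation*}
so that this atom is guaranteed to trigger a jump of size $Y_i\geq\tfrac{1}{2}$ in the representation \eqref{Ecriture entermesde PPP}. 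Writing $N$ for the number of these ``favorable'' atoms, we deduce on $E_n$ that $X(d(\epsilon)n)-X(c(\epsilon)n)\geq N/2$, while $X(d(\epsilon)n)\leq d(\epsilon)n$; this forces $N\leq 2d(\epsilon)n$.

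To conclude, $N$ is a Poisson variable with mean
\begin{equation*}
\lambda_{A}\,n:=(d(\epsilon)-c(\epsilon))\,n\int_{1/2}^{-x_{\text{max}}}(y+2pA)\,\frac{dy}{2\sqrt{2\pi y^{3}}},
\end{equation*}
which is affine in $A$ with strictly positive slope (because $p>0$). Choosing $A$ large enough to ensure $\lambda_{A}>2d(\epsilon)$, the crude Poisson tail bound \eqref{Poissonlemme} yields $\mathbb{P}(N\leq 2d(\epsilon)n)\leq C/n^{2}$, which is the claim. The main conceptual step is the identification of the atom-region on which a jump is automatic on $E_n$; once this is in place, the argument is a direct adaptation of the proof of Lemma \ref{proposition X_t plus grand que constante}, the crucial gain being that enlarging $A$ turns a Poisson variable of mean $\Theta(n)$ (insufficient to produce a contradiction) into one of mean $\Theta(An)$.
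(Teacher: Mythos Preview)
Your proof is correct and follows essentially the same route as the paper's: split off the event $\{X(c(\epsilon)n)<A\}$ via Lemma~\ref{proposition X_t plus grand que constante}, then on the remaining event use $S_i-X(S_i-)\geq 0$ and \eqref{inégalitécroissancep1} to force enough jumps so that a Poisson count with mean linear in $A$ must be improbably small. The only cosmetic differences are that you retain the $Y_i$ term in the threshold $Z_i\leq Y_i+2pA$ (the paper drops it and uses $Z_i\leq 2pA$ with the specific choice $A=1000/(p(d-c))$), and you leave $A$ generic to be tuned at the end rather than fixing it upfront.
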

\begin{proof}To avoid making the notations more cumbersome in this proof we write $c$ and $d$ instead of $c(\epsilon)$ and $d(\epsilon)$.

\noindent For $n\geq 1$ we note $A_n:=\{X(n)\leq (1+\epsilon)n, T_n>dn\}.$ Lemma \ref{proposition X_t plus grand que constante} shows that for all $A>0$, there exists $C(A)>0$ such that for all $n$ large enough we have 
$$\mathbb{P}\left(X(n)<A\right)\leq \frac{C(A)}{n^2}.$$ We immediately deduce that for all $A>0$ there exists $\Tilde{C}(A)$ such that for all $n$ large enough: 
$$\mathbb{P}\left(X(cn)<A\right)\leq \frac{\Tilde{C}(A)}{n^2},$$
so that to prove the lemma we can focus on $\Tilde{A}_n:=A_n\cap\left\{X(cn)\geq\frac{1000}{p(d-c)}\right\}.$

\noindent Since $T_n>dn$ on $\Tilde{A}_n$, if $S_i\in [cn,dn]$ then $X(S_i-)\leq S_i$. So if $Y_i \in [\frac{1}{2},-x_{\max}]$, \eqref{inégalitécroissancep1} yields: $$\frac{p_1(S_i-X(S_i-)-Y_i)}{p_1(S_i-X(S_i-))}\geq 1.$$ On $\Tilde{A}_n$ we thus have
\begin{align*}
    dn\geq X(dn)&\geq X(cn)+\sum_{cn< S_i\leq dn}Y_i\mathrm{1}_{Z_i\leq 2pX(S_i-)\frac{p_1(S_i-X(S_i-)-Y_i)}{p_1(S_i-X(S_i-))}}\notag\\
    &\geq \sum_{cn< S_i\leq dn}\frac{1}{2}\mathrm{1}_{Y_i \in [\frac{1}{2},-x_{\text{max}}]}\mathrm{1}_{Z_i\leq \frac{2000}{d-c}}=:\Sigma_{cn,dn},
\end{align*}
so that $\Tilde{A}_n\subset \{\Sigma_{cn,dn}\leq dn\}$. Since $(Y_i,Z_i,S_i)$ are the atoms of the Poisson point process $\Pi$, the variable $2\Sigma_{cn,dn}$ follows a Poisson distribution of parameter 
$$(d-c)n\frac{2000}{d-c}\int_{1/2}^{-x_{\text{max}}} \frac{dy}{\sqrt{2\pi y^3}}=:Bn$$ with $B>2>2d$ and we conclude with \eqref{Poissonlemme}.
\end{proof}
\noindent We now want to deal with the last term in \eqref{deuxieme decomposition 1}, notice that we have not yet used the "$\epsilon$ small enough hypothesis" and we will need it in the following.
\begin{lemme}\label{lemme epsilon petit}
 For every $p>0$, the function $f:\epsilon\mapsto \frac{pc(\epsilon)(1-d(\epsilon)}{2(1+\epsilon-d(\epsilon))}-(1+\epsilon-c(\epsilon))$ is strictly positive in the neigbourhood of $0$.   
\end{lemme}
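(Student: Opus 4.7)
The plan is simply to evaluate $f(0)$ explicitly and observe that it is strictly positive; the continuity of $f$ at $0$ then gives the result in a neighbourhood of $0$. The key observation is that the ratio $\frac{1-d(\epsilon)}{1+\epsilon-d(\epsilon)}$ in the first term of $f$ equals $1$ at $\epsilon=0$, so an apparent competition in $f(0)$ collapses into a simple algebraic identity.

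First I would compute $c(0)$ and $d(0)$ from the definitions \eqref{def c(epsilon)}:
\[
c(0)=\frac{2+p}{2(1+p)}, \qquad d(0)=\frac{1}{2}(c(0)+1)=\frac{4+3p}{4(1+p)}.
\]
In particular $1-d(0)=\frac{p}{4(1+p)}>0$, so the denominator $1+\epsilon-d(\epsilon)$ in $f(\epsilon)$ does not vanish at $\epsilon=0$. Hence $f$, being a rational function of $\epsilon$, is continuous at $0$.

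Next I would plug in. At $\epsilon=0$, since $1-d(0)=1+\epsilon-d(\epsilon)\big|_{\epsilon=0}$, the fraction simplifies:
\[
f(0)=\frac{p\,c(0)(1-d(0))}{2(1-d(0))}-(1-c(0))=\frac{p\,c(0)}{2}-(1-c(0)).
\]
Substituting $c(0)=\frac{2+p}{2(1+p)}$ and $1-c(0)=\frac{p}{2(1+p)}$ gives
\[
f(0)=\frac{p(2+p)}{4(1+p)}-\frac{p}{2(1+p)}=\frac{p(2+p)-2p}{4(1+p)}=\frac{p^{2}}{4(1+p)}.
\]
This is strictly positive for every $p>0$, and by continuity $f(\epsilon)>0$ on a neighbourhood of $0$, which is the claim.

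There is no real obstacle here: the only subtlety is noticing the algebraic cancellation between $(1-d(\epsilon))$ in the numerator and $(1+\epsilon-d(\epsilon))$ in the denominator at $\epsilon=0$, which is what makes the value $f(0)=\frac{p^2}{4(1+p)}$ clean and manifestly positive. The quantitative statement that one needs $\epsilon$ strictly smaller than some threshold depending on $p$ is then built into the phrase "in the neighbourhood of $0$".
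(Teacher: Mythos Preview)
Your proof is correct and follows essentially the same approach as the paper: compute $f(0)$ explicitly using the definitions of $c(\epsilon)$ and $d(\epsilon)$, observe that the $(1-d(0))$ factors cancel, obtain $f(0)=\frac{p^2}{4(1+p)}>0$, and conclude by continuity. The paper's own proof is slightly terser but arrives at the same value (written as $\frac{p^2}{2(2+2p)}$).
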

\begin{proof}
By continuity we only have to see that $f(0)>0$. From the definition \eqref{def c(epsilon)} of $c$ we see that:
$$f(0)=\frac{p(2+p)}{2(2+2p)}-1+\frac{2+p}{2+2p}=\frac{p^2}{2(2+2p)}>0.$$
\end{proof}
\noindent We thus set $\epsilon>0$ small enough such that 
\begin{equation}\label{epsilon petit}
\frac{pc(\epsilon)(1-d(\epsilon)}{2(1+\epsilon-d(\epsilon))}-(1+\epsilon-c(\epsilon))=:b>0.
\end{equation}
For such $\epsilon$ we have the following: 
\begin{lemme}\label{Lemme (1+eps) 1}
There exists a constant $A>0$ such that, for all $n$ large enough we have:
$$\mathbb{P}\left(X(n)\leq (1+\epsilon)n, T_n\leq d(\epsilon)n \right)\leq \frac{A}{n^3}.$$
\end{lemme}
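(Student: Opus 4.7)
The plan is to exploit the macroscopic attraction that the event $\{T_n\le d(\epsilon)n\}$ creates: on this event, for all $s\in[d(\epsilon)n,n]$ the coefficient $2pX(s-)$ in the jump kernel is bounded below by $2pc(\epsilon)n$, since $X$ is non-decreasing and $X(d(\epsilon)n)\ge X(T_n)>T_n\ge c(\epsilon)n$. Integrated over $[d(\epsilon)n,n]$ of length $(1-d(\epsilon))n$, this attraction generates an expected growth of $X$ which, by \eqref{epsilon petit}, is strictly larger than twice the growth $(1+\epsilon-c(\epsilon))n$ required for $\tilde{A}_n:=\{X(n)\le(1+\epsilon)n,\ T_n\le d(\epsilon)n\}$ to hold. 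Concentration for the pure-jump process $X$ should then yield the probability bound.

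Concretely, on $\tilde{A}_n$ one has $X(s-)\in[c(\epsilon)n,(1+\epsilon)n]$ for $s\in[d(\epsilon)n,n]$, and hence $s-X(s-)\in[-(1+\epsilon-d(\epsilon))n,(1-c(\epsilon))n]$. In the above-diagonal regime $X(s-)>s$, the expansion $I_1(x)=1/x+O(x^{-4})$ of Lemma \ref{lemmeapproxintegraleprecise} together with the analogous controls on $I_2,I_3$ gives the drift lower bound
\[\int y\,\mathbf{n}_p(s,X(s-),dy)\ \ge\ \frac{pX(s-)}{X(s-)-s}(1+o(1))\ \ge\ \frac{pc(\epsilon)}{1+\epsilon-d(\epsilon)}(1+o(1)),\]
while in the below-diagonal regime $X(s-)\le s$ the drift is even larger. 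Consequently, the compensator of $X(n)-X(d(\epsilon)n)$ on $[d(\epsilon)n,n]$ is at least $\frac{pc(\epsilon)(1-d(\epsilon))}{1+\epsilon-d(\epsilon)}\,n(1+o(1))$, which by \eqref{epsilon petit} exceeds $2(1+\epsilon-c(\epsilon))n$ for $n$ large. On $\tilde{A}_n$, however, the realized increment $X(n)-X(d(\epsilon)n)$ is at most $(1+\epsilon-c(\epsilon))n$, so the associated compensated martingale must undershoot its mean by a quantity of order $n$. Applying the Poisson bound \eqref{Poissonlemme} to a counting process of parameter of order $n^2$ (coming from an attraction of order $n$ times a time length of order $n$) that dominates the increment of $X$ from below — obtained by restricting $\Pi$ to the atoms with $Y_i\in[\tfrac12,-x_{\max}]$ and $Z_i\le 2pc(\epsilon)n$ — yields a probability $O(n^{-4})$ for such a deviation, which is stronger than the required $A/n^{3}$.

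The hard part will be making the drift lower bound uniform in the above-diagonal regime when $X(s-)-s$ is of order $n$: in that range the ratio $p_1(s-X(s-)-y)/p_1(s-X(s-))$ is super-polynomially small for $y=\Theta(1)$ and the drift integral concentrates on $y$ of order $1/(X(s-)-s)^2$. The remedy is to repeat the strategy of the proof of Lemma \ref{lemmeapproxintegraleprecise}: use the expansion \eqref{développement asymptotique p1} together with the Gaussian change of variables $u=(X(s-)-s)\sqrt{y}$ to isolate the leading contribution $pX(s-)/(X(s-)-s)$ and control the error uniformly in the admissible range of $X(s-)-s$.
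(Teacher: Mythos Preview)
Your overall plan (lower-bound the drift on $[d(\epsilon)n,n]$, then use concentration to rule out $X(n)\le(1+\epsilon)n$) matches the paper's, but the concentration step as written does not work, and the difficulty you flag in the last paragraph is not the one that actually bites.

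The proposed Poisson lower bound ``restrict $\Pi$ to $Y_i\in[\tfrac12,-x_{\max}]$ and $Z_i\le 2pc(\epsilon)n$'' does \emph{not} dominate the increment of $X$ from below on $\tilde A_n$. An atom $(Y_i,Z_i,S_i)$ contributes to $X$ only when
\[
Z_i\le\big(Y_i+2pX(S_i-)\big)\,\frac{p_1(S_i-X(S_i-)-Y_i)}{p_1(S_i-X(S_i-))},
\]
and in the above-diagonal regime $X(S_i-)-S_i\in(0,(1+\epsilon-d)n]$ the ratio is of order $\exp\bigl(-\tfrac12(X(S_i-)-S_i)^2Y_i\bigr)$, hence essentially $0$ for $Y_i\ge\tfrac12$ when $X(S_i-)-S_i$ is of order $n$. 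So the threshold can be far smaller than $2pc(\epsilon)n$, and your Poisson$(n^2)$ count is not a lower bound on the jumps of $X$; the claimed $O(n^{-4})$ does not follow. Relatedly, a direct martingale argument on $X-X^{\mathrm{pre}}$ is awkward here because the compensator and the quadratic variation still depend on the random trajectory $X(s-)$, and in the below-diagonal part (which is not excluded on $\tilde A_n$: $X(dn)\ge cn$ does not force $X(dn)>dn$) the variance contributions are not obviously $O(1/n)$.

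The paper avoids this by \emph{decoupling} the threshold from the path before doing any concentration. On $\tilde A_n$ one has $S_i-X(S_i-)\ge dn-(1+\epsilon)n$ for $S_i\in(dn,n]$; now, for $Y_i\in[0,-x_{\max}]$, Lemma~\ref{lemme ratio p1} (monotonicity of $x\mapsto p_1(x-y)/p_1(x)$ on $(-\infty,0)$) together with \eqref{inégalitécroissancep1} yields the uniform deterministic lower bound
\[
\frac{p_1(S_i-X(S_i-)-Y_i)}{p_1(S_i-X(S_i-))}\ \ge\ \frac{p_1\big(dn-(1+\epsilon)n-Y_i\big)}{p_1\big(dn-(1+\epsilon)n\big)}.
\]
This replaces the random threshold by a deterministic one at the worst admissible location, so the resulting minorant
\[
\Sigma_{dn,n}=\sum_{dn<S_i\le n}Y_i\,\mathbf{1}_{Y_i\in[0,-x_{\max}]}\,\mathbf{1}_{Z_i\le 2pc(\epsilon)n\,\frac{p_1(dn-(1+\epsilon)n-Y_i)}{p_1(dn-(1+\epsilon)n)}}
\]
is a genuine Poisson integral. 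Its mean is computed via $I_1^{(x_{\max})}\bigl((1+\epsilon-d)n\bigr)\ge\tfrac{1}{2(1+\epsilon-d)n}$ (note that the relevant $Y_i$ range is $[0,-x_{\max}]$, not $[\tfrac12,-x_{\max}]$: one must allow small $y$ precisely because the mass concentrates on $y\sim n^{-2}$), which with \eqref{epsilon petit} gives $\mathbb{E}[\Sigma_{dn,n}]-(1+\epsilon-c)n\ge bn$; its variance is bounded via $I_2\bigl((1+\epsilon-d)n\bigr)=O(n^{-3})$, giving $\mathrm{Var}(\Sigma_{dn,n})=O(1/n)$; Chebyshev then yields the $A/n^{3}$ bound. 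The ingredient you are missing is this use of Lemma~\ref{lemme ratio p1} to freeze the argument of the ratio at $dn-(1+\epsilon)n$ --- your Gaussian change of variables gives the drift size but not a path-independent minorant suitable for concentration.
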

\begin{proof}In this proof again we note $c$ and $d$ instead of $c(\epsilon)$ and $d(\epsilon)$.

\noindent Thanks to Lemma \ref{lemmeapproxintegraleprecise}, we find $n_0\geq 0$ such that for all $x\geq n_0-dn_0$ we have
\begin{equation}\label{ce qu'on veut pour E 3}
    I_1^{(x_{\text{max}})}(x)\geq \frac{1}{2x}\quad \text{and}\quad I_2(x)\leq \frac{M}{x^3}.
\end{equation}
For $n\geq 1$ we define $$B_n:=\{X(n)\leq (1+\epsilon)n, T_n\leq dn\}.$$ 
Since $T_n\leq dn$ on $B_n$, there exists $s^*\in [cn,dn]$ such that $X(s^*)\geq s^*$. Moreover, $X(n)<(1+\epsilon)n$ on $B_n$, so that since $X$ is increasing we have for all $S_i \in (dn,n]$:
$$(1+\epsilon)n \geq X(n)\geq X(S_i-)\geq X(dn)\geq X(s^*)\geq s^*\geq cn.$$
We deduce the following inequality on $B_n$:
\begin{align}
(1+\epsilon)n\geq X(n)&\geq X(dn)+\sum_{dn< S_i\leq n}Y_i\mathrm{1}_{Z_i\leq 2pX(S_i-)\frac{p_1(S_i-X(S_i-)-Y_i)}{p_1(S_i-X(S_i-))}}\notag\\
&\geq cn +\sum_{dn< S_i\leq n}Y_i\mathrm{1}_{Z_i\leq 2pcn\frac{p_1(S_i-X(S_i-)-Y_i)}{p_1(S_i-X(S_i-))}}\notag
\end{align}
We want to get rid of the terms $X(S_i-)$ in the indicator function, since they prevent us from calculating the expectation and variance. We prove that on $B_n$, for $n$ large enough, for all $S_i \in (dn,n]$ and $Y_i \in [0,-x_{\text{max}}]$: 
\begin{equation}\label{minoration ratio p1}
    \frac{p_1(S_i-X(S_i-)-Y_i)}{p_1(S_i-X(S_i-))}\geq \frac{p_1\left(dn-(1+\epsilon)n-Y_i\right)}{p_1\left(dn-(1+\epsilon)n\right)}.
\end{equation}
Indeed if $S_i \in (dn,n]$, since $X$ is increasing  we have $X(S_i-)\leq X(n)\leq (1+\epsilon)n$, so that $S_i-X(S_i-)\geq dn - (1+\epsilon)n$ and we then have two possibilities:
\begin{itemize}
    \item if $S_i-X(S_i-)\leq 0$, Lemma \ref{lemme ratio p1} gives \eqref{minoration ratio p1}.
    \item if $S_i-X(S_i-)>0$, for $Y_i\in [0,-x_{\text{max}}]$, from \eqref{inégalitécroissancep1} the ratio $\frac{p_1(S_i-X(S_i-)-Y_i)}{p_1(S_i-X(S_i-))}$ is larger than $1$ so that \eqref{minoration ratio p1} is also true for $n$ large enough. 
\end{itemize}
We deduce that on $B_n$:
\begin{align}
    (1+\epsilon)n\geq X(n)&\geq cn+\sum_{dn< S_i\leq n}Y_i\mathrm{1}_{Y_i\in[0,-x_{\max}]}\mathrm{1}_{Z_i\leq 2pcn\frac{p_1(dn-(1+\epsilon)n-Y_i)}{p_1(dn-(1+\epsilon)n)}}\notag\\
    &=:cn+\Sigma_{dn,n},\notag
\end{align}
which gives the inclusion:
\begin{equation}\label{inclusion (1+eps) 1}
     A_n\subset \{\Sigma_{dn,n}\leq (1+\epsilon-c)n\}.
\end{equation}
We now compute the expectation and variance of $\Sigma_{dn,n}^{(\epsilon)}$ using Campbell's formula (see e.g. \cite{LastPenrosePPP}) and \eqref{ce qu'on veut pour E 3}, and get that for all $n\geq n_0$:
\begin{align}
    \mathbb{E}\left[\Sigma_{dn,n} \right]&=\int_{dn}^n\int_0^{-x_{\text{max}}} 2pcn\frac{p_1(dn-(1+\epsilon)n-y)}{p_1(dn-(1+\epsilon)n)}\frac{dy}{2\sqrt{2\pi y}}ds\notag\\
    &=2pcn(n-dn)I_1^{(x_{\text{max}})}\left((1+\epsilon-d)n\right)\notag\\
    &\geq \frac{pc(1+\epsilon)}{2(1+\epsilon-d)}(1-d)n\hspace{2.6cm}\text{  from } \eqref{ce qu'on veut pour E 3},\notag
\end{align}
so that \eqref{epsilon petit} yields:
\begin{equation}
\mathbb{E}\left[\Sigma_{cn,n}\right]-(1+\epsilon-c)n\geq bn.\label{minoration E}
\end{equation}
Thanks to \eqref{ce qu'on veut pour E 3} and Campbell's formula, we have for all $n\geq n_0$:
\begin{align}
\text{Var}\left(\Sigma_{dn,n}\right)&=\int_{dn}^n\int_{0}^{-x_{\text{max}}} y^2 2pcn \frac{p_1(dn-(1+\epsilon)n-y)}{p_1(dn-(1+\epsilon)n)}\frac{dy}{2\sqrt{2\pi y^3}}ds.\notag\\
&\leq 2pcn(n-dn)I_2\left((1+\epsilon-d)n\right)\notag\\
&\leq \frac{C}{n}\hspace{2.6cm}\text{ from }\eqref{ce qu'on veut pour E 3} \label{minoration V}
\end{align}
with $C>0$. We conclude as follows with \eqref{inclusion (1+eps) 1}, \eqref{minoration E} and \eqref{minoration V}:
$$\mathbb{P}(B_n)\leq \mathbb{P}\left(\Sigma_{dn,n}\leq (1+\epsilon-c)n \right)\leq \mathbb{P}\left(\Sigma_{dn,n}-\mathbb{E}[\Sigma_{dn,n}]\geq bn \right)\leq \frac{\text{Var}(\Sigma_{dn,n})}{b^2n^2}\leq \frac{A}{n^3}$$ with $A>0$.
\end{proof}

\begin{proof}[Proof of Proposition \ref{Proposition Xt>(1+eps)t}:]
It is a direct consequence of Lemmas \ref{Lemme 3 Xt>t} and \ref{Lemme (1+eps) 1} together with Borel-Cantelli lemma. 
\end{proof}
 
\subsection{Proof of Theorem \ref{theoreme convergence p>0}}\label{sectionpreuvecv}

In this section we prove Theorem \ref{theoreme convergence p>0} which states the almost sure convergence of $X_p(t)-(1+p)t$ to $0$ when $t$ goes to infinity. From the previous section, for $\epsilon>0$ small enough, we have almost surely $X(t)\geq (1+\epsilon)t$ for $t$ large enough, which implies $X(t)-t\to +\infty$ and which allows us to use the asymptotics of the $p_1$ function to approximate our processes. It will be easier to work with the \textit{predictable compensator} of our process defined for $t\geq 0$ by:
\begin{equation}\label{def compensateur}
X^{\text{pre}}(t)=X(0)+\int_{0}^t\int_{\mathbb{R}_+}y(y+2pX(s))\frac{p_1(s-X(s)-y)}{p_1(s-X(s))}\frac{dy}{2\sqrt{2\pi y^3}}ds.
\end{equation}
With this definition
\begin{equation}
    \left(M(t):=X(t)-X^{\text{pre}}(t):t\geq 0\right)\notag
\end{equation}
is a martingale (see e.g. \cite{JacobsenMartingaleCompensator}, Chapter 4). We first show that $M$ converges almost surely, to justify our work on the compensator. We first set $\epsilon>0$ small enough such that \eqref{Proposition Xt>(1+eps)t} holds. 
\begin{lemme}\label{lemme cv martingale}
    The martingale $(M(t):t\geq 0)$ converges almost surely when $t\to +\infty$, and its limit $M_{\infty}$ is almost surely finite. 
\end{lemme}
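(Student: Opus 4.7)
The plan is to show that the predictable quadratic variation of $M$ is almost surely finite at infinity, so that the classical convergence theorem for local martingales (see e.g.\ \cite{JacobsenMartingaleCompensator}, Chapter~4) yields both the almost sure convergence and the finiteness of $M_\infty$. Since $M$ is the compensation of the Poisson integral representation \eqref{Ecriture entermesde PPP} of $X$, its predictable quadratic variation is
$$\langle M\rangle(t) = \int_0^t \int_{\mathbb{R}_+} y^2\,\mathbf{n}_p(s, X(s), dy)\,ds.$$
Expanding the jump kernel and recognising the integrals $I_2$ and $I_3$ of Section~\ref{section préliminaires}, one obtains, whenever $X(s)\geq s$, the identity
$$\int_{\mathbb{R}_+} y^2\,\mathbf{n}_p(s, X(s), dy) \;=\; I_3(X(s)-s) \;+\; 2p\,X(s)\,I_2(X(s)-s).$$

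By Proposition~\ref{Proposition Xt>(1+eps)t} there exists an almost surely finite random time $T$ after which $X(s)-s\geq \epsilon s$. Combining this lower bound with the asymptotics $I_2(x)\leq C_2/x^3$ and $I_3(x)\leq C_3/x^5$ from Lemma~\ref{lemmeapproxintegraleprecise}, and using the decomposition $X(s)=(X(s)-s)+s$, one gets for every $s\geq T$
$$2p\,X(s)\,I_2(X(s)-s) \;\leq\; \frac{2pC_2}{(X(s)-s)^2} + \frac{2pC_2\,s}{(X(s)-s)^3} \;\leq\; \frac{C}{s^2},$$
together with $I_3(X(s)-s)\leq C_3/(\epsilon s)^5$. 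Hence the tail contribution $\int_T^\infty \int y^2\,\mathbf{n}_p(s,X(s),dy)\,ds$ is almost surely finite. On $[0,T]$, the non-explosivity of $X$ (Proposition~\ref{proposition aperiodique}) bounds $X$ on this random compact interval, and the local boundedness of $(s,x)\mapsto \int y^2\,\mathbf{n}_p(s,x,dy)$ then gives $\langle M\rangle(T)<+\infty$ almost surely. Putting the two pieces together one concludes $\langle M\rangle_\infty<+\infty$ almost surely, and the classical convergence theorem delivers the lemma.

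The only delicate point is the treatment of the term $X(s)\,I_2(X(s)-s)$ on the tail, since no a priori upper bound on $X(s)$ is available at this stage of the argument (this control is exactly what Theorem~\ref{theoreme convergence p>0} aims to establish). The key idea that unblocks the estimate is the decomposition $X(s)=(X(s)-s)+s$, which transfers the entire control to the gap $X(s)-s\geq \epsilon s$ and thus only requires the lower bound of Proposition~\ref{Proposition Xt>(1+eps)t}.
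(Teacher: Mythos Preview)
Your proof is correct and follows essentially the same route as the paper: compute the predictable quadratic variation, recognise the integrals $I_2$ and $I_3$, and use Proposition~\ref{Proposition Xt>(1+eps)t} together with Lemma~\ref{lemmeapproxintegraleprecise} to show that the tail $\int_T^\infty$ converges. Your explicit decomposition $X(s)=(X(s)-s)+s$ to control $X(s)\,I_2(X(s)-s)$ is in fact a welcome clarification---the paper simply asserts that $\int_{t_0}^\infty \frac{C_2 X(s)}{(X(s)-s)^3}\,ds$ is finite ``as a consequence of $X(t)\geq (1+\epsilon)t$'', leaving the reader to supply exactly the observation you spell out.
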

\begin{proof}
To prove the lemma, we compute the quadratic variation $\langle M,M\rangle_t$ of $M$ and show that $\langle M,M\rangle_{\infty}:=\lim_{t\to +\infty}\langle M,M\rangle_t$ is almost surely finite. For $t\geq 0$, we have (\cite{JacobsenMartingaleCompensator}, Chapter 4) :
\begin{equation}
\langle M,M\rangle_t=\int_0^t\int_{\mathbb{R}_+}y^2(y+2pX(s))\frac{p_1(s-X(s)-y)}{p_1(s-X(s))}\frac{dy}{2\sqrt{2\pi y^3}}ds.
\end{equation}
We know that almost surely there exists $t_0\geq 0$ such that for all $t\geq t_0$, $X(t)\geq (1+\epsilon)t$.
 Thanks to Lemma \ref{lemmeapproxintegraleprecise} we thus have for all $t\geq t_0$:
\begin{align}
    \langle M,M\rangle_t&= \langle M,M\rangle_{t_0} + \int_{t_0}^t I_3\left(X(s)-s\right)+2pX(s)I_2\left(X(s)-s\right)ds\notag\\
    &\leq \langle M,M\rangle_{t_0}+\int_{t_0}^{\infty}\left(\frac{C_3}{(X(s)-s)^5}+\frac{C_2X(s)}{(X(s)-s)^3}\right)ds\notag\\
    &<+\infty \quad \text{ a.s.}\notag
\end{align}
where the last line is a consequence of $X(t)\geq (1+\epsilon)t$ for $t\geq t_0$ so that the integral converges.
\end{proof}

This lemma enables us to approximate our process $X$ by its predictable compensator $X^{\text{pre}}$. The goal of the next lemma is to get a sort of "differential equation" for $X^{\text{pre}}$.
\begin{lemme}\label{lemmeapproximation compensateur}
    There exists a constant $C>0$, and there exists $t_0=t_0(\omega)\geq 0$ with $t_0<+\infty$ almost surely such that for all $t\geq t_1 \geq t_0$ we have:
    $$\int_{t_1}^{t}\frac{p X(s)}{X(s)-s}ds-\frac{C}{t_1^2}\leq X^{\mathrm{pre}}(t)-X^{\mathrm{pre}}(t_1)\leq \int_{t_1}^t\frac{pX(s)}{X(s)-s}ds+\frac{C}{t_1^2}.   $$
\end{lemme}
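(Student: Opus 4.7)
The starting point is the definition \eqref{def compensateur} of $X^{\mathrm{pre}}$, which gives
\begin{equation*}
X^{\mathrm{pre}}(t)-X^{\mathrm{pre}}(t_1)=\int_{t_1}^{t}\int_{\mathbb{R}_+}y\bigl(y+2pX(s)\bigr)\frac{p_1(s-X(s)-y)}{p_1(s-X(s))}\frac{dy}{2\sqrt{2\pi y^3}}\,ds.
\end{equation*}
The first step is an algebraic decomposition of the integrand: writing $\frac{y(y+2pX(s))}{2\sqrt{2\pi y^3}}=\frac{\sqrt{y}}{2\sqrt{2\pi}}+\frac{pX(s)}{\sqrt{2\pi y}}$ and comparing with the definitions \eqref{deffonctionI1}--\eqref{deffonctionI2}, the inner integral is exactly $I_2(X(s)-s)+pX(s)\,I_1(X(s)-s)$, provided $X(s)\geq s$ so that $s-X(s)=-(X(s)-s)$ with $X(s)-s\geq 0$.

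Next I would invoke Proposition \ref{Proposition Xt>(1+eps)t}: for $\epsilon>0$ fixed small, there is an a.s.\ finite random time $t_0(\omega)$ such that $X(s)-s\geq \epsilon s$ for every $s\geq t_0$. Enlarging $t_0$ if necessary to exceed the threshold $x_0$ of Lemma \ref{lemmeapproxintegraleprecise}, and so that $\epsilon t_0\geq x_0$ as well, both the $I_1$ and $I_2$ asymptotics apply with $x=X(s)-s$. Writing $I_1(X(s)-s)=\tfrac{1}{X(s)-s}+r_1(s)$ with $|r_1(s)|\leq \tfrac{C_1}{(X(s)-s)^4}$ and $0\leq I_2(X(s)-s)\leq \tfrac{C_2}{(X(s)-s)^3}$, one obtains
\begin{equation*}
I_2(X(s)-s)+pX(s)\,I_1(X(s)-s)=\frac{pX(s)}{X(s)-s}+R(s),
\end{equation*}
where the remainder satisfies $|R(s)|\leq \frac{C_2}{(X(s)-s)^3}+\frac{C_1\,pX(s)}{(X(s)-s)^4}$.

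The final step is to bound $\int_{t_1}^t|R(s)|\,ds$ uniformly by $C/t_1^2$. Using the crude lower bound $X(s)-s\geq \epsilon s$, one has in particular $X(s)=s+(X(s)-s)\leq (1+1/\epsilon)(X(s)-s)$, so that
\begin{equation*}
|R(s)|\leq \frac{C_2}{(\epsilon s)^3}+\frac{C_1\,p\,(1+1/\epsilon)}{(\epsilon s)^3}\leq \frac{C'}{s^3}
\end{equation*}
for $s\geq t_0$, with $C'$ deterministic. Integrating from $t_1$ to $t$ yields $\bigl|X^{\mathrm{pre}}(t)-X^{\mathrm{pre}}(t_1)-\int_{t_1}^t\frac{pX(s)}{X(s)-s}ds\bigr|\leq \frac{C'}{2t_1^2}$, which is the claimed bound with $C=C'/2$.

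The routine part is the algebraic identification with $I_1, I_2$ and the estimate of the remainder. The only genuine subtlety — and the reason this lemma is stated with a random $t_0$ — is that the bounds from Lemma \ref{lemmeapproxintegraleprecise} are only available once $X(s)-s$ is comfortably large, which is precisely what Proposition \ref{Proposition Xt>(1+eps)t} buys us; reducing $pX(s)/(X(s)-s)^4$ to $O(s^{-3})$ via $X(s)\leq (1+1/\epsilon)(X(s)-s)$ is the one-line trick that lets the error be integrated against $ds$ to give $O(1/t_1^2)$.
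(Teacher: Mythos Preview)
Your proof is correct and follows essentially the same approach as the paper: identify the inner integral as $I_2(X(s)-s)+pX(s)I_1(X(s)-s)$, apply Lemma \ref{lemmeapproxintegraleprecise} once $X(s)-s\geq \epsilon s$ (via Proposition \ref{Proposition Xt>(1+eps)t}), and integrate the $O(s^{-3})$ remainder. The paper's proof is terser on the final bound and does not spell out the reduction $X(s)\leq (1+1/\epsilon)(X(s)-s)$ that you use to absorb the $X(s)$ in the numerator, but this is exactly the missing justification for its last displayed inequality.
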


Before moving on to the proof of this lemma, let us explain how we will use it later. Lemma \ref{lemmeapproximation compensateur} approximates the compensator by the integral $\int_{t_0}^{t}\frac{p X(s)}{X(s)-s}ds$. We will write $X(s)=X^{\text{pre}}(s)+M(s)$ and approximate $M(s)$ by its limit. This will give us two inequalities for $\int_{t_0}^{t}\frac{p X(s)}{X(s)-s}ds$, in the form of a differential equation (Equations \eqref{sous solution} and \eqref{sur solution}) and we will use results on comparisons of under and over-solutions of differentiable equations to get a result (we will just need to be careful about differentiability since our process is not differentiable everywhere). We first give a quick proof of Lemma \ref{lemmeapproximation compensateur}.  
\begin{proof}
With the notations of Lemma \ref{lemmeapproxintegraleprecise} we have that for all $t\geq 0$:
\begin{equation}
    X^{\text{pre}}(t)=X(0)+\int_0^t I_2\left(X(s)-s\right)+pX(s)I_1\left(X(s)-s\right) ds. 
\end{equation}
Almost surely there exists $T$ such that for all $s\geq T$ we have $X(s)-s\geq \epsilon s$. In particular, thanks to Lemma \ref{lemmeapproxintegraleprecise} we get that almost surely there exists $t_0\geq 0$ so that for all $s\geq t_0$ we have:
$$\left|I_1\left(X(s)-s\right)-\frac{1}{X(s)-s}\right|\leq \frac{C_1}{\left(X(s)-s\right)^4}$$
and
$$I_2\left(X(s)-s\right)\leq \frac{C_2}{\left(X(s)-s\right)^3}$$
so that for all $t$ large enough:
\begin{align*}
    X^{\text{pre}}(t)-X^{\text{pre}}(t_1)&\leq \int_{t_1}^t \frac{C_2}{\left(X(s)-s\right)^3}+\frac{pX(s)}{X(s)-s}+\frac{pC_1X(s)}{\left(X(s)-s\right)^4}ds\\
    &\leq \int_{t_1}^t \frac{pX(s)}{X(s)-s}ds +\frac{C}{t_1^2}
\end{align*}
since $X(s)-s \geq \epsilon s$ for all $t\geq T$.
We get the lower bound in the same way. 
\end{proof}
As our strategy is to compare solutions of differential equations, we need a starting point of comparison, which is the object of the following lemma. 
\begin{proposition}\label{proposition point de depart}
    For every $\delta>0$, almost surely, 
    $$(1+p)t-M_{\infty}-\delta<X^{\mathrm{pre}}(t)<(1+p)t-M_{\infty}+\delta  \text{ infinitely often.}   $$
\end{proposition}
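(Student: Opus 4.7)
The plan is to argue by contradiction. Set $Y(t) := X^{\text{pre}}(t) - (1+p)t + M_\infty$; the claim is that $Y(t) \in (-\delta, \delta)$ for infinitely many $t$. Assume, towards a contradiction, that on an event of positive probability there is some $t_0$ past which $|Y(t)| \geq \delta$. Since $X^{\text{pre}}$, being defined by an integral in $s$, is continuous, so is $Y$, and by the intermediate value theorem, on this bad event, either $Y(t) \geq \delta$ for all $t \geq t_0$, or $Y(t) \leq -\delta$ for all $t \geq t_0$. I treat the two cases by the same strategy, using the elementary identity
$$\frac{pX(s)}{X(s)-s} - (1+p) = \frac{(1+p)s - X(s)}{X(s)-s},$$
which, together with Lemma \ref{lemmeapproximation compensateur}, controls the growth of $Y$.

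In the case $Y(t) \leq -\delta$, the convergence $M(s) \to M_\infty$ from Lemma \ref{lemme cv martingale} transfers the inequality to $X$: for $s$ large, $X(s) \leq (1+p)s - \delta/2$. Hence $(1+p)s - X(s) \geq \delta/2$ and $X(s)-s \leq ps$, so the right-hand side of the identity is bounded below by $\delta/(2ps)$. Integrating from $t_1$ to $t$ and plugging back into Lemma \ref{lemmeapproximation compensateur} yields
$$Y(t) \;\geq\; Y(t_1) + \frac{\delta}{2p}\ln(t/t_1) - \frac{C}{t_1^2},$$
which diverges to $+\infty$, contradicting $Y(t) \leq -\delta$.

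The case $Y(t) \geq \delta$ requires a preliminary matching \emph{upper} bound on $X$, since there the identity is useful only when $X(s)-s$ is not too large. To produce it I would use Proposition \ref{Proposition Xt>(1+eps)t} to obtain $X(s)-s \geq \epsilon s$ for large $s$, so that $\frac{pX(s)}{X(s)-s} \leq p(1+1/\epsilon)$, and then feed this back into Lemma \ref{lemmeapproximation compensateur} to deduce the crude linear bound $X(s) \leq Ks$ for some $K>0$ and $s$ large. With this in hand, the assumption $Y(t) \geq \delta$ gives $X(s) \geq (1+p)s + \delta/2$ and $X(s)-s \leq Ks$, so the right-hand side of the identity is bounded above by $-\delta/(2Ks)$, and integrating produces $Y(t) \to -\infty$, again a contradiction.

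The main obstacle is the bootstrap needed in this second case: the logarithmic divergence argument demands two-sided linear control on $X(s)-s$, and while the lower bound comes directly from Section \ref{section 1+epsilon}, the upper bound must first be extracted by reinjecting that lower bound into Lemma \ref{lemmeapproximation compensateur} before the contradiction can be closed. Beyond that, the proof is a careful comparison of the near-differential inequality $Y'(t) \approx -Y(t)/(pt)$ against its explicit logarithmic solutions, carried out in integral form to accommodate the fact that $X^{\text{pre}}$ is not everywhere differentiable.
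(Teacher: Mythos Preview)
Your proof is correct and follows essentially the same route as the paper: contradiction via Lemma~\ref{lemmeapproximation compensateur}, producing a logarithmic divergence that is incompatible with $Y$ staying on one side of the band. Your reduction at the outset (continuity of $X^{\mathrm{pre}}$ plus the intermediate value theorem forces either $Y\ge\delta$ eventually or $Y\le-\delta$ eventually) is in fact cleaner than the paper's, which proves each ``infinitely often'' separately and then appeals to continuity at the end to combine them.

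The one place you work harder than necessary is the bootstrap in the case $Y\ge\delta$. You use the identity $\frac{pX}{X-s}=(1+p)+\frac{(1+p)s-X}{X-s}$, which for a negative numerator requires an \emph{upper} bound on $X(s)-s$ and hence your preliminary linear bound $X(s)\le Ks$. The paper instead writes $\frac{pX}{X-s}=p+\frac{ps}{X-s}$, so that in the case $Y\ge\delta$ one needs only a \emph{lower} bound on $X(s)-s$; and that bound, namely $X(s)-s\ge ps+\delta/2$, falls out directly from the assumption $X^{\mathrm{pre}}(s)\ge(1+p)s-M_\infty+\delta$ together with $M(s)\ge M_\infty-\delta/2$. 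With this decomposition the integral is bounded above by $\int_{t_1}^t\bigl(p+\frac{ps}{ps+\delta/2}\bigr)\,ds=(1+p)(t-t_1)-\frac{\delta}{2p}\ln\frac{pt+\delta/2}{pt_1+\delta/2}$, and the contradiction closes without any bootstrap. So what you flag as ``the main obstacle'' is an artifact of the particular algebraic rearrangement you chose, not an intrinsic feature of the argument.
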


\begin{proof}
The proof is a consequence of Lemma \ref{lemmeapproximation compensateur}, the little subtlety is that we need to have both inequalities simultaneously. We let $\delta>0$ and define
    $$A:=\left\{X^{\text{pre}}(t)<(1+p)t-M_{\infty}+\delta  \text{ infinitely often.}\right\}$$
Since $M(t)$ converges almost surely to a finite limit $M_{\infty}$, we only need to show that 
$$\mathbb{P}\left(A^c\cap \{M(t)\xrightarrow[t\to +\infty]{}M_{\infty}\} \cap \{M_{\infty} \text{ is finite}\} \right)=0.$$
We note $B:=A^c\cap \{M(t)\xrightarrow[t\to +\infty]{}M_{\infty}\} \cap \{M_{\infty} \text{ is finite}\}$ and take $\omega \in B$. There exists $t_1\geq 0$ such that for all $t\geq t_1$ we have \begin{equation}\label{equationinfinimentsouvent}
X^{\text{pre}}(t)\geq (1+p)t-M_{\infty}+\delta 
\end{equation} 
Since $\omega\in \{M(t)\xrightarrow[t\to +\infty]{}M_{\infty}\} $, taking $t_1$ larger if necessary, we can assume that for all $t\geq t_1 $, we have \begin{equation}
    M(t)\geq M_{\infty}-\frac{\delta}{2}.\label{equationio2}
\end{equation}
Using Lemma \ref{lemmeapproximation compensateur}, \eqref{equationinfinimentsouvent} and \eqref{equationio2} we get for all $t\geq t_1$:
\begin{align*}
 (1+p)t-M_{\infty}+\delta\leq X^{\text{pre}}(t)
 &\leq X^{\text{pre}}(t_1)+\frac{C}{t_1^2}+ \int_{t_1}^t p+\frac{ps}{X^{\text{pre}}(s)+M_{\infty}-\frac{\delta}{2}-s}ds\\
 &\leq X^{\text{pre}}(t_1)+\frac{C}{t_1^2}+ \int_{t_1}^t p+\frac{ps}{ps+\frac{\delta}{2}}ds\\
 &\leq X^{\text{pre}}(t_1)+\frac{C}{t_1^2}+(1+p)(t-t_1)-\frac{\delta}{2p}\ln\left(\frac{pt+\frac{\delta}{2}}{pt_0+\frac{\delta}{2}}\right).
\end{align*}
Since $M_{\infty}$ is finite on $B$ and the logarithmic term tends to infinity when $t\to +\infty$, we get a contradiction, and $B$ has to be a null event. With the exact same reasoning we get that almost surely
$$X^{\text{pre}}(t)>(1+p)t-M_{\infty}-\delta  \text{ infinitely often.}$$

\noindent As we said before, we need to get both simultaneously. Since $X^{\text{pre}}$ only has positive jumps, it cannot alternate over the line $(1+p)t-M_{\infty}+\delta $ and under the line $(1+p)t-M_{\infty}-\delta$. 
\end{proof}

From this point all the remaining work is deterministic, we fix $\omega \in \Omega$ (but will forget it in our notations) such that $\omega \in A$ with $\mathbb{P}(A)=1$ and where $A$ is such that Proposition \ref{Proposition Xt>(1+eps)t}, Lemma \ref{lemme cv martingale} and Proposition \ref{proposition point de depart} hold. From Lemma \ref{lemme cv martingale} $M(t)$ converges to $M_{\infty}$ when $t$ goes to infinity and we thus take $\delta>0$ and $t_0$ such that for all $t\geq t_0$:
\begin{equation}
    | M(t) -M_{\infty}| <\delta. \label{def du epsilon martingale vers limite}
\end{equation}
We focus on:
\begin{equation}\notag
  u(t)=\int_{t_0}^{t}\frac{p X(s)}{X(s)-s}ds  
\end{equation}
which approximates $X^{\text{pre}}(t)$ by Lemma \ref{lemmeapproximation compensateur}. Writing $X(s)=X(s)-s+s$ and using $X(s)=X^{\text{pre}}(s)+M(s)$, we get:
\begin{equation}\notag
    u(t)=\int_{t_0}^t p+\frac{ps}{X^{\text{pre}}(s)+M(s)-s}ds.
\end{equation}
The function $u$ is left and right-differentiable almost everywhere and for all $t\geq t_0$ we have:
\begin{equation}\notag
    \partial_-u(t)=p+\frac{pt}{X^{\text{pre}}(t-)+M(t-)-t}
\end{equation}
and 
\begin{equation}\notag
    \partial_+u(t)=p+\frac{pt}{X^{\text{pre}}(t)+M(t)-t}.
\end{equation}
Since $t_0$ is such that \eqref{def du epsilon martingale vers limite} holds, using Lemma \ref{lemmeapproximation compensateur}, we get:
\begin{align}\label{sous solution}
    \partial_-u(t)&\leq p+ \frac{pt}{u(t-)+X^{\text{pre}}(t_0)-\frac{C}{t_0^2}+M_{\infty}-\delta-t}\notag\\
    &=:f_1(t,u(t-)).
\end{align}
 and 
 \begin{align}\label{sur solution}
   \partial_-u(t)&\geq p+ \frac{pt}{u(t-)+X^{\text{pre}}(t_0)+\frac{C}{t_0^2}+M_{\infty}+\delta-t}\notag\\
    &=:f_2(t,u(t-)).  
 \end{align}
and replacing $u(t-)$ with $u(t)$ we get the same inequalities for $\partial_+u$. The function $u$ is thus an under-solution of the equation 
 $$(E_1): \,\,\,v'(t)=f_1(t,v(t))$$
and an over-solution of the equation 
 $$(E_2):\,\,\, v'(t)=f_2(t,v(t)).$$
Exact solutions of these equations are not explicit, but it is not difficult to find an over-solution for $(E_1)$ and an under-solution for $(E_2)$. We will then compare it with our function $u$, taking care since $u$ is not differentiable. We define, for $t\geq t_0$:
\begin{equation}
    u_1(t):=(1+p)t-X^{\text{pre}}(t_0)-M_{\infty}+2\delta
\end{equation}
and 
\begin{equation}
    u_2(t):=(1+p)t-X^{\text{pre}}(t_0)-M_{\infty}-2\delta.
\end{equation}
It is not difficult to check that $u_1$ is an over-solution of $(E_1)$ and $u_2$ an under-solution for $(E_2)$, for example:
\begin{align*}
    f_1(t,u_1(t))&=p+\frac{pt}{pt-\frac{C}{t_0^2}+\delta}\leq 1+p=u_1'(t)
\end{align*}
since we can take $t_0$ large enough so that $\delta-\frac{C}{t_0^2}$ is positive. We want to compare $u$ with both $u_1$ and $u_2$, we first must have a starting point. We notice that $u(t_0)=0$ and that:
$$u_1(t_0)=(1+p)t_0-X^{\text{pre}}(t_0)-M_{\infty}+2\delta \text{ and }  u_2(t_0)=(1+p)t_0-X^{\text{pre}}(t_0)-M_{\infty}-2\delta.   $$
so that for our starting point, we need to have $u_1(t_0)>0$ and $u_2(t_0)<0$ which means:
$$(1+p)t_{0}-M_{\infty}-2\delta< X^{\text{pre}}(t_0)<(1+p)t_0-M_{\infty}+2\delta,  $$
which is exactly Proposition \ref{proposition point de depart} and we can thus assume that $t_0$ is such that:
\begin{equation}\label{startingpointcomparison}
    u_2(t_0)<u(t_0)=0<u_1(t_0).
\end{equation}
From this starting point, we use a standard method for comparing solutions with under and over-solutions of differential equations, we just have to be careful since $u$ is not differentiable. Recall that $u$ is an under-solution of $(E_1)$ and $u_1$ and over-solution of $(E_1)$. In the same time, $u$ is an over-solution of $(E_2)$ and $u_2$ and under-solution of $(E_2)$. The following proposition shows that $u$ stays between $u_2$ and $u_1$ in the future of $t_0$:
\begin{proposition}\label{comparaison solutions EDO}
    For all $t\geq t_0$, we have:
\begin{equation}
    (1+p)t-X^{\text{pre}}(t_0)-M_{\infty}-2\delta\leq u(t)\leq (1+p)t-X^{\text{pre}}(t_0)-M_{\infty}+2\delta.
\end{equation}
\end{proposition}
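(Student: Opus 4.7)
The plan is to run a standard comparison argument for differential inequalities, with two twists that address the specifics of our situation: the function $u$ is only piecewise differentiable (it is continuous, being the integral of a locally bounded function, but $\partial_\pm u$ can jump whenever $X$ jumps), and we only know weak inequalities for $\partial_\pm u$ versus $f_1, f_2$. Both issues will be resolved by a small linear perturbation of the comparison functions.

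First I would record the monotonicity and sign information on $f_1, f_2$. For $t \geq t_0$ large enough so that $\delta - C/t_0^{2} > 0$, one checks that at the candidate solution $u_1(t) = (1+p)t - X^{\mathrm{pre}}(t_0) - M_\infty + 2\delta$ the denominator in $f_1(t, u_1(t))$ equals $pt + \delta - C/t_0^{2} > 0$, and similarly for $f_2$ at $u_2$. Hence $v \mapsto f_i(t, v)$ is strictly decreasing in a neighbourhood of $u_i(t)$. From this monotonicity and the explicit computation $f_1(t, u_1(t)) < 1 + p = u_1'(t)$ and $f_2(t, u_2(t)) > 1 + p = u_2'(t)$, the functions $u_1$ and $u_2$ are \emph{strict} over/under-solutions of $(E_1), (E_2)$ respectively.

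For the upper bound, I would introduce the perturbation $u_{1,\eta}(t) := u_1(t) + \eta(t - t_0)$ for $\eta > 0$, which, by the strict inequality above, still satisfies $u_{1,\eta}'(t) > f_1(t, u_{1,\eta}(t))$ for $t_0$ large enough (the perturbation only improves the gap). By the starting point \eqref{startingpointcomparison} we have $u(t_0) < u_{1,\eta}(t_0)$. Suppose for contradiction there exists $t^* > t_0$ with $u(t^*) > u_{1,\eta}(t^*)$. By continuity of $u$ and $u_{1,\eta}$, there is a first crossing time $t_1 \in (t_0, t^*]$ with $u(t_1) = u_{1,\eta}(t_1)$ and $u(t) \leq u_{1,\eta}(t)$ for $t \in [t_0, t_1]$. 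For $h > 0$ small,
\begin{equation*}
u(t_1) - u(t_1 - h) \geq u_{1,\eta}(t_1) - u_{1,\eta}(t_1 - h) = (1 + p + \eta)h,
\end{equation*}
while by \eqref{sous solution} and continuity of $u$,
\begin{equation*}
u(t_1) - u(t_1 - h) = \int_{t_1 - h}^{t_1} \partial_+ u(s)\, ds \leq \int_{t_1 - h}^{t_1} f_1(s, u(s))\, ds.
\end{equation*}
Dividing by $h$ and letting $h \to 0^+$ yields $f_1(t_1, u(t_1)) \geq 1 + p + \eta$. But $u(t_1) = u_{1,\eta}(t_1)$ and the explicit computation above gives $f_1(t_1, u_{1,\eta}(t_1)) < 1 + p$, a contradiction. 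Hence $u(t) \leq u_{1,\eta}(t)$ for every $t \geq t_0$; letting $\eta \to 0$ gives the upper bound. The lower bound is proved symmetrically with $u_{2,\eta}(t) := u_2(t) - \eta(t - t_0)$, exchanging the roles of $f_1$ and $f_2$.

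The main obstacle is the usual subtlety of comparison principles at a potential first contact point when one has only weak derivative inequalities: the naive argument would yield equality of the one-sided derivatives at $t_1$ rather than a contradiction. This is precisely what the linear perturbation $\eta(t - t_0)$ is designed to overcome, by producing the strict slack $+\eta$ in $u_{1,\eta}' - f_1(\cdot, u_{1,\eta})$ that survives the comparison and forces a genuine contradiction. The monotonicity of $v \mapsto f_1(t, v)$ (on the relevant range where denominators are positive, guaranteed by our choice of $t_0$) is what allows us to replace $f_1(t_1, u(t_1))$ by $f_1(t_1, u_{1,\eta}(t_1))$ at the contact point.
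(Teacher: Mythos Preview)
Your proof is correct and follows essentially the same route as the paper: a first-crossing-time argument exploiting that $u_1$ is a strict supersolution of $(E_1)$ and $u_2$ a strict sub\-solution of $(E_2)$. Two small remarks. First, the $\eta$-perturbation is unnecessary: since $\delta - C/t_0^2 > 0$, one already has the strict inequality $f_1(t,u_1(t)) < 1+p = u_1'(t)$, so at the first contact time $t_1$ your integral estimate gives $1+p \le f_1(t_1,u(t_1)) = f_1(t_1,u_1(t_1)) < 1+p$, a contradiction without any slack from $\eta$. The paper exploits this strictness directly. Second, your integral formulation $u(t_1)-u(t_1-h)=\int_{t_1-h}^{t_1}\partial_+u(s)\,ds$ together with continuity of $s\mapsto f_1(s,u(s))$ is arguably cleaner than the paper's pointwise left-derivative/Taylor analysis at $T$, which involves a case split (on whether $u_2(T-)=u(T-)$ or $u_2(T-)>u(T-)$) that is in fact vacuous since $u$ is continuous, and for the upper bound invokes an additional ``contraction'' estimate on $f_1$ to propagate the inequality past $t_0$. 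Both arguments are standard ODE comparison; yours is the more streamlined packaging.
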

\noindent We recognize $u_2$ on the left-hand side and $u_1$ on the right-hand side. 
\begin{proof}
We show both inequalities separately, beginning with $u_2\leq u$, since it is a bit easier. 

$\bullet$ \textbf{Proof of the left-hand side inequality}

\noindent Thanks to Proposition \ref{proposition point de depart}, we have $u_2(t_0)<u(t_0).$ We first notice that this comparison still holds in a closed future of $t_0$ since $u$ is non-decreasing and $u_2$ continuous.

\noindent We introduce:
$$T:=\inf\{t\geq t_0: u_2(t)\geq u(t)\}\in [t_0,+\infty]$$
(with the convention $\inf(\emptyset)=+\infty$). By contradiction, let us assume that $T<+\infty$. 
From the previous paragraph, we have $T>t_0$. We first notice that 
$$u_2(T-)=u_2(T)\geq u(T)\geq u(T-).$$
We have two possibilities: either $u_2(T-)=u(T-)$ or $u_2(T-)>u(T-)$.
\begin{itemize}
    \item If $u_2(T-)=u(T-)$: 

Then for $h<0$ small, we have:
\begin{align*}
    u_2(T+h)-u(T+h)&=u_2(T-)-u(T-)+h\left(u_2'(T)-\partial_-u(T)\right)+o(h)\\
    &=h\left(u_2'(T)-\partial_-u(T)\right)+o(h)\\
    &> 0
\end{align*}
since $u_2'(T)<f_2(T,u(T-))=f_2(T,u(T-))\leq \partial_-u(T)$. But this contradicts the definition of $T$ as an infimum.
\item If $u_2(T-)>u(T-)$:
for this case, notice that the function 
$$f_2(t,x)=p+\frac{pt}{x+X^{\text{pre}}(t_0)+\frac{C}{t_0^2}+M_{\infty}+\delta-t}$$ is decreasing in $x$. In particular we have:
$$f_2(T-,u_2(T-))<f_2(T-,u(T-)).$$
Since $u_2'(T)<f_2(T-,u_2(T-))$ and $\partial_-u(T)>f_2(T-,u(T-))$ we deduce:
$$u_2'(T)-\partial_-u(T)<f_2(T-,u_2(T-))-f_2(T-,u(T-))<0$$
We conclude, in a similar way to the first case: for small $h<0$ we have:
$$u_2(T+h)-u(T+h)=u_2(T-)-u(T-)+h(u_2'(T)-\partial_-u(T))+o(h)>0$$ since $u_2(T-)>u(T-)$. This is again a contradiction and concludes the first part of the proof.
\end{itemize}

$\bullet$\textbf{Proof of the right-hand side inequality}

\noindent We want to show that $u(t)\leq u_1(t)$ for all $t\geq t_0$. From Proposition \ref{proposition point de depart} we know that $u(t_0)<u_1(t_0)$.
We use the same method as before and prove first that the inequality is still true in a closed future of $t_0$. This is the only additional difficulty of this case since we cannot use the growth argument for $u$ we used in the first case. We solve it using a "contraction" property for the function $f_1$. For sake of clarity we note $A:=X^{\text{pre}}(t_0)+M_{\infty}+\delta$. With this notation, for $x,y$ we have:
\begin{align*}
    f_1(t,x)-f_1(t,y)&=\frac{pt}{x+A+\frac{C}{t_0^2}-t}-\frac{pt}{y+A+\frac{C}{t_0^2}-t}\\
    &=\frac{pt(y-x)}{(x+A+\frac{C}{t_0^2}-t)(y+A+\frac{C}{t_0^2}-t)}\\
    &\sim \frac{B(y-x)}{t} \text{ for large } t,
\end{align*}
for a constant $B>0.$ We have $u_1'(t_0)>f_1(t_0,u_1(t_0))$ and $\partial_+u(t_0)\leq f_1(t_0,u_1(t_0+))$ so that from the contraction property, we have for $h>0$:
\begin{align*}
h\left(u_1'(t_0)-\partial_+u(t_0)\right)&>h\left(f_1(t_0,u_1(t_0))-f_1(t_0,u(t_0+))\right)\\
&\sim \frac{Bh}{t_0}(u(t_0+)-u_1(t_0)),
\end{align*}
which gives for small $h>0$: 
\begin{align*}
 u_1(t_0+h)-u(t_0+h)&=u_1(t_0)-u_1(t_0)+h\left(u_1'(t_0)-\partial_+u(t_0)\right)+o(h) \\
 &\geq \left(1-\frac{Bh}{t_0}\right)\left(u_1(t_0)-u(t_0)\right)+o(h)\\
 &>0.
\end{align*}
The rest of the proof is exactly the same as in the first case and is left to the reader. 
\end{proof}
\noindent We can finally conclude the proof of Theorem \ref{theoreme convergence p>0}:
\begin{proof}[Proof of Theorem \ref{theoreme convergence p>0}:]
Thanks to Proposition \ref{comparaison solutions EDO} we have that for all $t\geq t_0$:
$$(1+p)t-X^{\text{pre}}(t_0)-M_{\infty}-2\delta\leq \int_{t_0}^{t}\frac{p X(s)}{X(s)-s}ds\leq (1+p)t-X^{\text{pre}}(t_0)-M_{\infty}+2\delta,$$
and from Lemma \ref{lemmeapproximation compensateur}, we have:
$$\int_{t_0}^{t}\frac{p X(s)}{X(s)-s}ds-\frac{C}{t_0^2}\leq X^{\text{pre}}(t)-X^{\text{pre}}(t_0)\leq \int_{t_0}^t\frac{pX(s)}{X(s)-s}ds+\frac{C}{t_0^2}.$$
Combining those inequalities, we get that for all $t\geq t_0$:
$$(1+p)t-M_{\infty}-2\delta-\frac{C}{t_0^2}\leq X^{\text{pre}}(t) \leq (1+p)t-M_{\infty}+2\delta+\frac{C}{t_0^2}.$$
We finally use that $X^{\text{pre}}(t)=X(t)-M(t) $ to get:
$$(1+p)t-(M_{\infty}-M(t))-2\delta-\frac{C}{t_0^2}\leq X(t)\leq (1+p)t-(M_{\infty}-M(t))+2\delta+\frac{C}{t_0^2}.  $$
Since $M(t)\xrightarrow[t\to +\infty]{}M_{\infty} $ and since we can choose $\delta$ abritarily small and $t_0$ abritarily large (from Proposition \ref{proposition point de depart}), this finishes the proof. 
\end{proof}
\section{The case $p=0$}
In this section we prove Theorem \ref{theoremeconvergence p=0} which states the convergence of $X_0(t)-t$ to a stationary law when $t$ goes to infinity, and gives tail inequalities for the stationary law.  As in the previous section, we forget the $0$ in the notation $X_0$ and just write $X$. We use a different strategy to the one used in the case $p>0$ based on the fact that when $p=0$ the process $Y(t):=X_0(t)-t$ is a homogeneous Markov process. We use a Foster-Lyapunov criterion for Markov processes (\cite{MeynTweedie}, Theorem 20.3.2): if we denote $\mathcal{A}$ the extended generator of the process $\left(Y(t)\right)_{t\geq 0}$, we will find a Lyapunov function $V:\mathbb{R}\to [1,+\infty]$ such that 
\begin{equation}
    \mathcal{A}V(x)\leq -aV(x)+b\mathrm{1}_{C}(x) \label{critereLyapunov}
\end{equation} 
where $a >0$, $b<+\infty$ and $C$ is a petite set (see e.g. \cite{MeynTweediearticle} for a definition).
\subsection{Extended generator}
\begin{lemme}
    The process $Y$ is a non-explosive, irreducible, aperiodic, homogeneous Markov process, with extended generator
    $$
    \mathcal{A}(f)(x)=-f'(x)+\int_{\mathbb{R}_+}\left(f(y+x)-f(x)\right)\frac{dy}{2\sqrt{2\pi y}}\frac{p_1(-x-y)}{p_1(-x)}, $$
    and whose domain contains functions $f$ of the form
    $f(x)=\mathrm{e}^{\alpha x^3}\mathrm{1}_{x>0}+\mathrm{e}^{\beta |x|^3}\mathrm{1}_{x\leq 0}$ for $0<\alpha<\frac{1}{6}$ and some $\beta>0$.
\end{lemme}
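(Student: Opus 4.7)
The plan is to verify in turn the five assertions of the lemma: time-homogeneity and the Markov property of $Y$, non-explosivity, irreducibility and aperiodicity, the generator formula, and finally the domain claim, which is where the real work lies.

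First I would transport the properties of $X_0$ to $Y$. The key observation is that substituting $X_0(t-) = Y(t-) + t$ into the kernel $\mathbf{n}_0$ kills the explicit time dependence:
$$\mathbf{n}_0\!\left(t, Y(t-) + t, dy\right) = \frac{dy}{2\sqrt{2\pi y}} \cdot \frac{p_1(-Y(t-) - y)}{p_1(-Y(t-))}.$$
Since $Y$ has the same jumps as $X_0$ and a deterministic drift $-t$, it inherits the Markov property from $X_0$ and is time-homogeneous with this kernel. Non-explosivity, irreducibility and aperiodicity of $Y$ are direct consequences of those of $X_0$ proved in Proposition \ref{proposition aperiodique}, together with the fact that the jump density $p_1(-x-y)/(p_1(-x)\sqrt{y})$ is positive on all of $\mathbb{R}_+$.

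The generator formula is then standard for a deterministic-flow-plus-jump process: between jumps the flow $t \mapsto Y(0) - t$ contributes the drift term $-f'(x)$, and the Poisson representation \eqref{Ecriture entermesde PPP} with $p = 0$ provides, via the compensation formula, the integral term. The associated Dynkin-type identity follows by localizing along the stopping times $\tau_n = \inf\{s : |Y(s)| > n\}$, once the pointwise absolute integrability of $\mathcal{A}f$ has been established.

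The main obstacle is the domain claim for $f(x) = \mathrm{e}^{\alpha x^3}\mathbf{1}_{x>0} + \mathrm{e}^{\beta|x|^3}\mathbf{1}_{x \leq 0}$ when $0 < \alpha < 1/6$ and $\beta > 0$ is suitably small. Such $f$ is $C^1$ on $\mathbb{R}$ (both one-sided derivatives at $0$ vanish). The delicate step is controlling the jump integrand as $y \to +\infty$: using $f(x+y) \leq \mathrm{e}^{\alpha(x+y)^3}$ together with the asymptotic \eqref{développement asymptotique p1} in the form $p_1(-x-y)/p_1(-x) \lesssim \mathrm{e}^{-((x+y)^3 - x^3)/6}$ up to polynomial factors, one obtains an integrand dominated by $\mathrm{e}^{(\alpha - 1/6)(x+y)^3}$, which decays super-exponentially in $y$ exactly because $\alpha < 1/6$. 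Near $y = 0$ the bound $|f(x+y) - f(x)| = O(y)$ combined with the $1/\sqrt{y}$ singularity gives an $O(\sqrt{y})$ integrand. On the range $0 < y \leq -x$ (only relevant for $x \leq 0$) the inequality $|x+y|^3 \leq |x|^3$ keeps the integrand bounded in the exponential factor, so the $1/\sqrt{y}$ singularity remains integrable regardless of $\beta > 0$. A sufficiently small $\beta$ then controls the contribution of $\mathcal{A}f$ near the boundary, and a standard localization and dominated-convergence argument (see \cite{MeynTweedie}, Chapter 14) concludes that $f$ lies in the extended domain.
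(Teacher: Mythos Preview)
Your proposal is correct and follows essentially the same approach as the paper. The only stylistic difference is that the paper derives the generator by passing through the space-time generator $\mathcal{L}$ of $(t,X(t))$ applied to $F(t,x)=f(x-t)$, whereas you argue directly from the drift-plus-jump structure of $Y$; for the domain claim the paper is actually briefer than you, simply noting the required integrability $\sup_{s\leq t}\int_{\mathbb{R}_+}|f(x+y)-f(x)|\,\mathbf{n}_0(s,x,dy)<\infty$ and deferring the explicit bounds to the Lyapunov computation in the following section.
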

\begin{proof}
Non-explosivity, irreducibility and aperiodicity have already been discussed in Proposition \ref{proposition aperiodique}. Let $(L_u)_{u\geq 0}$ be the family of generators associated to the (inhomogeneous) process $\left(X(u)\right)_{u\geq 0}$. We have, for $f:\mathbb{R}_+\to \mathbb{R}$ and $x\in \mathbb{R}_+$:
\begin{equation}
    L_u(f)(x)=\int_{\mathbb{R}_+}\left(f(x+y)-f(x)\right)\mathbf{n}_0(u,x,dy).
\end{equation}
Then, if $\mathcal{L}$ is the generator of $\left(t,X(t)\right)_{t\geq 0}$, we have for $F:\mathbb{R}_+\times\mathbb{R}_+\to \mathbb{R}$ and $(t,x)\in \mathbb{R}_+\times\mathbb{R}_+ $: 
\begin{align*}
    \mathcal{L}(F)(t,x)&=\partial_tF(t,x)+L_t\left(F(t,.)\right)(x)\\
    &=\partial_tF(t,x)+ \int_{\mathbb{R}_+}\left(F(t,x+y)-F(t,x)\right)\mathbf{n}_0(t,x,dy).
\end{align*}
Let $f:\mathbb{R}\to \mathbb{R}$ and $F(t,x):=f(x-t)$ for $t,x \in \mathbb{R}_+$. Applying the previous equality to $F$, we get:
$$\mathcal{L}(F)(t,x)=-f'(x-t)+\int_{\mathbb{R}_+}\left(f(x+y-t)-f(x-t)\right)\mathbf{n}_0(t,x,dy).$$
By evaluating this equation in $\left(t,X(t)\right)$ we therefore have:
$$\mathcal{L}(F)\left(t,X(t)\right)=-f'\left(Y(t)\right)+\int_{\mathbb{R}_+}\left(f(y+Y(t))-f(Y(t))\right)\frac{dy}{2\sqrt{2\pi y}}\frac{p_1(-Y(t)-y)}{p_1(-Y(t))}.$$
We thus deduce that:
$$\mathcal{A}(f)\left(Y(t)\right)=-f'\left(Y(t)\right)+\int_{\mathbb{R}_+}\left(f(y+Y(t))-f(Y(t))\right)\frac{dy}{2\sqrt{2\pi y}}\frac{p_1(-Y(t)-y)}{p_1(-Y(t))},$$
so that if $x\in \mathbb{R}$, we have:
\begin{equation}
    \mathcal{A}(f)(x)=-f'(x)+\int_{\mathbb{R}_+}\left(f(y+x)-f(x)\right)\frac{dy}{2\sqrt{2\pi y}}\frac{p_1(-x-y)}{p_1(-x)}. \label{générateur}
\end{equation}
Notice that we did not talk about domain in this proof, we will not describe it entirely but we need to make sure that the function we will use in \eqref{critereLyapunov} belongs to $\mathcal{D}(A)$. For this we need $$M(t):=f\left(Y(t)\right)-f\left(Y(0)\right)-\int_0^t \mathcal{A}(f)\left(Y(s)\right)ds$$ to be a local martingale. The difficulty is about the integrability condition and this follows from the fact that for all $t\geq 0$  and $x\geq 0$
$$\sup_{s\leq t}\int_{\mathbb{R}_+}\vert f(x+y)-f(x)\vert \mathbf{n}_0(s,x,dy)<+\infty.$$
We do not detail here why functions of the form $f(x)=\mathrm{e}^{\alpha |x|^3}$ for $0<\alpha<\frac{1}{6}$ do check this condition,  as it would be redundant with what follows, but hopefully it will become clear after the calculations in the next section.\end{proof}

\subsection{Proof of Theorem \ref{theoremeconvergence p=0}}
This is the technical part of the proof.
We set $$V(x):=\mathrm{e}^{\alpha |x|^3}\mathrm{1}_{x\geq 0}+\mathrm{e}^{\beta |x|^3}\mathrm{1}_{x< 0}$$ 
with $$0<\alpha<\frac{1}{6}\,\,\,\, \text{   and   }\,\,\,\, 0<\beta<\min(\frac{p_1(0)}{9},\alpha).$$
Thanks to the asymptotics \eqref{développement asymptotique p1} and \eqref{equivalentp1+infini} on $p_1$, we let $A>0$ be such that for all $x>A$: 
\begin{equation}
\frac{1}{2}\frac{x^{-5/2}}{\sqrt{2\pi}}\leq p_1(x)\leq \frac{3}{2}\frac{x^{-5/2}}{\sqrt{2\pi}}\label{equivp1+infini}   
\end{equation}
and for all $x<-A$:
\begin{equation}
 \frac{1}{2}\frac{\sqrt{|x|}\mathrm{e}^{-|x|^3/6}}{\sqrt{2\pi}}\leq p_1(x)\leq \frac{3}{2}\frac{\sqrt{|x|}\mathrm{e}^{-|x|^3/6}}{\sqrt{2\pi}}.\label{equivp1-infini}   
\end{equation}
We then take $B>\max(A,1)$ (we will see in the proof the conditions required for $B$) and finally define $\delta:=(\beta/\alpha)^{1/3}$ and $C:=[-B/\delta,B]$. We prove the following:
\begin{lemme}\label{lemme verification critere Lyapunov}
    Let $V:\mathbb{R}\to [1,\infty)$ defined above. There exists $a>0,b>0$ such that for all $x\in \mathbb{R}$ we have:
    $$\mathcal{A}V(x)\leq -aV(x)+b\mathrm{1}_C(x).$$
\end{lemme}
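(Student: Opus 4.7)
The plan is to verify the drift inequality $\mathcal{A}V(x)\leq -aV(x)+b\mathbf{1}_C(x)$ by analysing $\mathcal{A}V(x)$ separately on the three regions $(B,\infty)$, $C=[-B/\delta,B]$, and $(-\infty,-B/\delta)$. On $C$ the function $V$ is bounded and the total jump rate is finite (the remark after Proposition \ref{proposition aperiodique}), so $\mathcal{A}V$ is bounded on $C$ and the constant $b$ in $b\mathbf{1}_C$ absorbs it. The two tails require opposite strategies, and the left tail is where the condition $\beta<p_1(0)/9$ is used.

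For $x>B$, the drift $-V'(x)=-3\alpha x^2V(x)$ is strongly negative, so we only need the jump integral $J(x):=\mathcal{A}V(x)+V'(x)$ to be much smaller. Since $-x<-A$ and $-x-y<-A$, the exponential asymptotic \eqref{equivp1-infini} gives $\frac{p_1(-x-y)}{p_1(-x)}\leq 3\sqrt{(x+y)/x}\,\mathrm{e}^{-((x+y)^3-x^3)/6}$; combined with $V(x+y)=\mathrm{e}^{\alpha(x+y)^3}$, the inequality $(x+y)^3-x^3\geq 3x^2y$, and $\alpha<1/6$, the integrand of $J(x)/V(x)$ is dominated by $3\sqrt{(x+y)/x}\,\mathrm{e}^{-3(1/6-\alpha)x^2y}/(2\sqrt{2\pi y})$. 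The change of variable $v=x\sqrt{y}$ then yields $J(x)\leq CV(x)/x$, which is negligible against $3\alpha x^2V(x)$, giving $\mathcal{A}V(x)\leq -aV(x)$ for $B$ large.

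For $x<-B/\delta$, write $u=-x$ so that $V(x)=\mathrm{e}^{\beta u^3}$ and $-V'(x)=+3\beta u^2V(x)$ is now an \emph{upward} destabilising drift that must be beaten by the jump integral. Substitute $w=x+y\in(-u,\infty)$ and split $J(x)$ into three pieces: (i) $w\in[0,1]$, (ii) $w>\delta u$, and (iii) the remainder. On (iii), $V(w)\leq V(x)$ so the contribution is non-positive. On (ii), $V(w)=\mathrm{e}^{\alpha w^3}$ but $-w<-\delta u$ is strongly negative, so \eqref{equivp1-infini} gives $p_1(-w)\leq\tfrac{3}{2}\sqrt{w}\,\mathrm{e}^{-w^3/6}/\sqrt{2\pi}$; the product $V(w)p_1(-w)\leq C\sqrt{w}\,\mathrm{e}^{-(1/6-\alpha)w^3}$ integrates to $O(\mathrm{e}^{-(1/6-\alpha)(\delta u)^3})=o(V(x))$. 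On the crucial piece (i), $V(w)\leq\mathrm{e}^{\alpha}$ is bounded so $V(w)-V(x)\leq -V(x)(1-o(1))$; the upper asymptotic \eqref{equivp1+infini} gives $1/p_1(u)\geq\tfrac{2\sqrt{2\pi}}{3}u^{5/2}$, and $\frac{1}{2\sqrt{2\pi(u+w)}}=(1-o(1))/(2\sqrt{2\pi u})$, so the jump-measure density exceeds $\frac{p_1(-w)u^2}{3}(1-o(1))$; unimodality of $p_1$ with maximum at $x_{\max}\approx -0.886$ together with the checkable fact $p_1(-1)>p_1(0)$ imply $p_1(-w)\geq p_1(0)$ for $w\in[0,1]$, so piece (i) contributes at most $-\frac{p_1(0)}{3}u^2V(x)(1-o(1))$. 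Collecting, $\mathcal{A}V(x)\leq(3\beta-\tfrac{p_1(0)}{3})u^2V(x)+o(u^2V(x))$, and $\beta<p_1(0)/9$ makes the leading coefficient strictly negative, yielding $\mathcal{A}V(x)\leq -aV(x)$.

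The main technical obstacle is the quantitative balance on piece (i): one must simultaneously use the sharp upper bound $p_1(u)\leq\tfrac{3}{2}u^{-5/2}/\sqrt{2\pi}$, the pointwise lower bound $p_1(-w)\geq p_1(0)$ on the unit interval $[0,1]$, and the asymptotics of $1/\sqrt{u+w}$, and verify that the resulting coefficient $p_1(0)/3$ strictly dominates $3\beta$. The numerical condition $\beta<p_1(0)/9$ imposed on $V$ is precisely calibrated to this inequality; the remainder of the proof reduces to plugging the asymptotic expansions \eqref{equivp1+infini} and \eqref{equivp1-infini} into the integrals with standard changes of variable.
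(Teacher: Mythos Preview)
Your argument is correct and follows essentially the same three-region decomposition as the paper, with the same key estimates: for $x>B$ the exponential asymptotic \eqref{equivp1-infini} on the ratio beats the integral against the drift $-3\alpha x^2V(x)$; for $x<-B/\delta$ the crucial negative contribution comes from the window $w=x+y\in[0,1]$ with the lower bound $p_1(-w)\geq p_1(0)$ and the upper bound $p_1(u)\leq\frac{3}{2}u^{-5/2}/\sqrt{2\pi}$, producing the coefficient $p_1(0)/3$ which dominates $3\beta$ precisely when $\beta<p_1(0)/9$. One small gap: on $C$, finiteness of the total jump rate and boundedness of $V$ do not by themselves give boundedness of $\mathcal{A}V$, since $V(x+y)$ is unbounded as $y\to\infty$; you still need the observation that $V(x+y)p_1(-x-y)$ decays like $\mathrm{e}^{-(1/6-\alpha)(x+y)^3}$ (together with $p_1(-x)$ bounded away from $0$ on $C$), which is what the paper packages as continuity of $x\mapsto\mathcal{A}V(x)$ on the compact set.
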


With this lemma, Theorem \ref{theoremeconvergence p=0} follows from Lyapunov criterion for homogeneous Markov processes (\cite{MeynTweedie}, Theorem 5.1) and we also get the tail inequalities for the stationary law.

\begin{proof}[Proof of Theorem \ref{theoremeconvergence p=0}:]
From Lemma \ref{lemme verification critere Lyapunov} we have $$\forall x\in \mathbb{R},\,\,\,\,\, \mathcal{A}V(x)\leq -aV(x)+b\mathrm{1}_{[-B/\delta,B]}(x)$$ with $a,b>0$. 
To conclude with \eqref{critereLyapunov} we need the set $[-B/\delta,B]$ to be petite. It is a consequence that all compact sets are petite sets for the process $Y$ (see e.g \cite{compactsetsarepetite}, Theorems 5.1 and 7.1). Since $Y$ is a non-explosive, irreducible, aperiodic process, we can apply Foster-Lyapunov criterion (\cite{MeynTweediearticle}, Theorem 5.1) and get that $\left(X(t)-t\right)_{t\geq 0}$ converges to a stationary law $\mu$ when $t\to +\infty$.

\noindent Finally, the tail inequalities for $\mu$ are a consequence of inequality \eqref{critereLyapunov} with $V(x)=\mathrm{e}^{\alpha |x|^3}\mathrm{1}_{x\geq 0}+\mathrm{e}^{\beta |x|^3}\mathrm{1}_{x< 0}$ 
with $0<\alpha<\frac{1}{6}$ and some $\beta>0$. We integrate this inequality with respect to the probability measure $\mu$ and use that if $f \in \mathcal{D}(\mathcal{A})$ then $\int_{\mathbb{R}}\mathcal{A}f\,d\mu=0$. We get:
\begin{equation*}
    0=\int_{\mathbb{R}}\mathcal{A}V(x)\mu(dx)\leq \int_{\mathbb{R}} -aV(x)+b\mathrm{1}_{[-B,B]}(x) \mu(dx).
\end{equation*}
Since $\mu$ is a probability measure, we deduce that for all $0<\alpha<\frac{1}{6}$ and for some $\beta>0$:
\begin{equation*}
  \int_{\mathbb{R}}\left[\mathrm{e}^{\alpha |x|^3}\mathrm{1}_{x\geq 0}+\mathrm{e}^{\beta |x|^3}\mathrm{1}_{x< 0}\right]\mu(dx)<+\infty.
\end{equation*}
In particular, both $\int_{\mathbb{R}_+}\mathrm{e}^{\alpha x^3}\mu(dx)$ and $\int_{\mathbb{R}_-}\mathrm{e}^{\beta|x|^3}\mu(dx)$ are finite and we finish the proof using Markov's inequality.
\end{proof}
\noindent We still need to prove Lemma \ref{lemme verification critere Lyapunov}, which is quite heavy and technical.
\begin{proof}
We decompose the tasks whether $x$ is in our petite set $C$ or not. 

$\bullet${\textbf{Case $x>B$:}}

\noindent In this case the asymptotics \eqref{equivp1-infini} give that for all $ y \in \mathbb{R}_+$:
\begin{equation}
\frac{p_1(-x-y)}{p_1(-x)}\leq 3 \frac{\sqrt{x+y}}{\sqrt{x}}\mathrm{e}^{-x^2y/2-xy^2/2-y^3/6}.\label{asymptotique ratio}
\end{equation}
Then, from the expression \eqref{générateur} of $\mathcal{A}$ and the choice of $V$, we have for all $x>B$:
\begin{align}
\mathcal{A}V(x)&=-3\alpha x^2 V(x)+  \int_{\mathbb{R}_+}\left(\mathrm{e}^{\alpha(x+y)^3}-\mathrm{e}^{\alpha x^3}\right)\frac{dy}{2\sqrt{2\pi y}}\frac{p_1(-x-y)}{p_1(-x)}\notag \\
&=V(x)\left[ -3\alpha x^2 + \int_{\mathbb{R}_+}\left(\mathrm{e}^{\alpha(y^3+3xy^2+3x^2y}-1\right)\frac{dy}{2\sqrt{2\pi y}}\frac{p_1(-x-y)}{p_1(-x)}  \right]\notag
\end{align}
Using \eqref{asymptotique ratio} we hence get:
\begin{align*}
   \mathcal{A}V(x)&\leq V(x)\left[-3\alpha x^2 + 3\int_{\mathbb{R}_+} \mathrm{e}^{(\alpha-1/6) y^3+(3\alpha-1/2) xy^2+(3\alpha-1/2) x^2y} \frac{\sqrt{x+y}}{\sqrt{x}} \frac{dy}{2\sqrt{2\pi y}}  \right] \\
   &\leq V(x)\left[-3\alpha x^2 + \int_{\mathbb{R}_+} \mathrm{e}^{(\alpha-1/6) y^3}(1+\frac{\sqrt{y}}{\sqrt{x}})\frac{dy}{2\sqrt{2\pi y}}\right],
\end{align*}
where we used that $0<\alpha<1/6$ so that $3\alpha-1/2<0$.
We finally use the two easy following facts: if $t>0,$ $$\int_{\mathbb{R}_+}\mathrm{e}^{-tu^3}\,du=\frac{\Gamma(4/3)}{t^{1/3}}\,\,\text{ and }\,\, \int_{\mathbb{R}_+}\mathrm{e}^{-tu^3}\frac{du}{\sqrt{u}}=\frac{2\Gamma(7/6)}{t^{1/6}}$$ where $\Gamma(s)=\int_{\mathbb{R}_+}x^{s-1}\mathrm{e}^{-x} $ is the Gamma function, defined for $s>0.$ Using these with $t=1/6-\alpha$ , we deduce that:
\begin{align*}
    \mathcal{A}V(x)&\leq V(x)\left[-3\alpha x^2 + \frac{\Gamma(7/6)}{2\sqrt{2\pi}(1/6-\alpha)^{1/6}}+\frac{\Gamma(4/3)}{2\sqrt{2\pi x}(1/6-\alpha)^{1/3}}\right].
\end{align*}
Taking $B$ large enough we thus have
$\mathcal{A}V(x)\leq -\frac{1}{2}V(x)$ for all $x>B$ which concludes in this case. 

$\bullet${\textbf{Case $x<-B/\delta$:}}

\noindent This case is quite similar to the previous one, we just have to be careful since the derivation term in the generator expression is positive and we will need to find negative contribution in the integrals. For  $x<-B/\delta$ we have:
\begin{equation}
\mathcal{A}V(x)=3\beta x^2 V(x)+\int_{\mathbb{R}_+} \left(V(y+x)-V(x) \right) \frac{dy}{2\sqrt{2\pi y}}\frac{p_1(-x-y)}{p_1(-x)}. \notag   
\end{equation}
We first get rid of the part of the integral where $0\leq y\leq -x$ since $V(y+x)-V(x)=\mathrm{e}^{-\beta(x+y)^3}-\mathrm{e}^{-\beta x^3}$ is negative in this case so the integral between $0$ and $-x$ is negative so that: 
\begin{align}
 \mathcal{A}V(x)&\leq 3\beta x^2 V(x)+  \int_{-x}^{+\infty} \left(\mathrm{e}^{\alpha(y+x)^3}-\mathrm{e}^{-\beta x^3} \right) \frac{dy}{2\sqrt{2\pi y}}\frac{p_1(-x-y)}{p_1(-x)} \notag \\
 &\leq V(x)\left[3\beta x^2 +  \int_{-x}^{+\infty} \left(\mathrm{e}^{\alpha(y+x)^3+\beta x^3}-1 \right) \frac{dy}{2\sqrt{2\pi y}}\frac{p_1(-x-y)}{p_1(-x)}     \right] \label{etapecalcul}
\end{align}
To find a negative contribution in \eqref{etapecalcul}, we decompose our integral, whether $-x\leq y\leq -(1+\delta)x$ or not (recall $\delta=(\beta/\alpha)^{1/3}$). In the first case it is easy to see that $\mathrm{e}^{\alpha(y+x)^3+\beta x^3}-1$ is negative and we must therefore lower bound $$\int_{-x}^{-(1+\delta)x} \left(1-\mathrm{e}^{\alpha(y+x)^3+\beta x^3}\right) \frac{dy}{2\sqrt{2\pi y}}\frac{p_1(x-y)}{p_1(x)}.$$ 
Since $x<-B/\delta<-B$, from \eqref{equivp1+infini} we get $$p_1(-x)\leq  \frac{3|x|^{-5/2}}{2\sqrt{2\pi}}.$$
Then notice that if $-x\leq y \leq -x+1$ we have $-1\leq x+y\leq 0$ so that $$p_1(-x-y)\geq p_1(0).$$ 
We thus have:
\begin{align}
   \int_{-x}^{-(1+\delta)x} \left(1-\mathrm{e}^{\alpha(y+x)^3+\beta x^3} \right) \frac{dy}{2\sqrt{2\pi y}}\frac{p_1(x-y)}{p_1(x)}&\geq \frac{2}{3}|x|^{5/2}p_1(0)\int_{-x}^{-x+1}\left(1- \mathrm{e}^{\alpha(y+x)^3+\beta x^3}\right)\frac{dy}{2\sqrt{y}}\notag\\
   &\geq \frac{2}{3}|x|^{5/2}p_1(0)\left(1-\mathrm{e}^{\alpha+\beta x^3)}\right)\left(\sqrt{-x+1}-\sqrt{-x}\right)\notag\\
   &\geq \frac{2}{3}p_1(0)\left(1-\mathrm{e}^{\alpha+\beta x^3}\right)\frac{x^2}{2} \label{etapecalcul2}
\end{align}
We finally have to deal with the part of the integral $y\geq -(1+\delta)x$ in \eqref{etapecalcul}. Since $x<-B$, \eqref{equivp1+infini} gives:
$$\frac{1}{2}\frac{|x|^{-5/2}}{\sqrt{2\pi}}\leq p_1(-x).$$
Then $y\geq -(1+\delta)x$ implies  $-x-y\leq -\delta x\leq -B$ so that from \eqref{equivp1-infini} we get:
$$p_1(-x-y)\leq \frac{3}{2}\frac{\sqrt{|x|}\mathrm{e}^{-|x+y|^3/6}}{\sqrt{2\pi}}.$$
We hence get:
\begin{align}
\int_{-(1+\delta)x}^{+\infty} \left(\mathrm{e}^{\alpha(y+x)^3+\beta x^3}-1 \right)\frac{p_1(-x-y)}{p_1(-x)} \frac{dy}{2\sqrt{2\pi y}}&\leq 3 |x|^{5/2}\int_{-(1+\delta)x}^{+\infty}\mathrm{e}^{\alpha(y+x)^3+\beta x^3} \mathrm{e}^{-(y+x)^3/6}\frac{\sqrt{y+x}dy}{2\sqrt{2\pi y}} \notag\\
&\leq \frac{3}{2\sqrt{2\pi}}|x|^{5/2}\mathrm{e}^{\beta x^3}  \int_{-(1+\delta)x}^{+\infty} \mathrm{e}^{(y+x)^3(\alpha-1/6)}\,dy\notag\\
&\leq\frac{3}{2\sqrt{2\pi}}|x|^{5/2}\mathrm{e}^{\beta x^3}  \int_{-\delta x}^{+\infty} \mathrm{e}^{u^3(\alpha-1/6)}\,du\notag\\
&\leq \frac{1}{2\sqrt{2\pi}}\frac{|x|^{1/2}}{(1/6-\alpha)}\mathrm{e}^{\beta x^3}\mathrm{e}^{(1/6-\alpha)x^3}  \label{3ememorceau}
\end{align}
where we used that for $t>0$ and $A>0$ we have:
$$\int_{A}^{+\infty}\mathrm{e}^{-tu^3}du\leq \frac{1}{3tA^2}\mathrm{e}^{-tA^3}$$
with $t=1/6-\alpha$ and $A=-\delta x$.

\noindent We finally put \eqref{etapecalcul}, \eqref{etapecalcul2} and \eqref{3ememorceau} together to get that for all $ x <-B:$
\begin{align}
    \mathcal{A}V(x)&\leq V(x)\left[3\beta x^2 - \frac{1}{3}p_1(0)\left(1-\mathrm{e}^{\alpha+\beta x^3}\right)x^2 + \frac{|x|^{1/2}}{2\sqrt{2\pi}}\mathrm{e}^{\beta x^3}\mathrm{e}^{(1/6-\alpha)x^3}\right].\notag
\end{align}
Since $\beta<\min(\alpha,\frac{p_1(0)}{9})$, we easily see that it is possible to choose $B$ such that for all $x<-B$ we have
$$\mathcal{A}V(x)\leq -\frac{1}{2}V(x).$$ 

$\bullet${\textbf{Case $x\in [-B/\delta,B]$:}}

\noindent We want to show that for all $x\in [-B/\delta,B]$, we have $\mathcal{A}V(x)\leq b$ for a constant $b$ independent of $x$. This is a mere consequence of the continuity of the application
$$x\mapsto \mathcal{A}V(x)$$ on the compact set $[-B/\delta,B]$.
Indeed for $x\in [0,B]$, we have:
$$\mathcal{A}V(x)=-3\alpha x^2\mathrm{e}^{\alpha x^3}+\int_{\mathbb{R}_+}(\mathrm{e}^{\alpha (x+y)^3}-\mathrm{e}^{\alpha x^3})\frac{p_1(-x-y)}{p_1(-x)}\frac{dy}{2\sqrt{2\pi y}},  $$
so that the continuity is a consequence of clasical results of continuity for parameter-dependent integral (the domination follows from our choice of $\alpha$). And for $x\in[-B/\delta,0)$ we have:
\begin{align*}
\mathcal{A}V(x)&=3\beta x^2\mathrm{e}^{\alpha x^3}+\int_{0}^{-x}(\mathrm{e}^{\beta(x+y)^3}-\mathrm{e}^{\alpha x^3})\frac{p_1(-x-y)}{p_1(-x)}\frac{dy}{2\sqrt{2\pi y}}\\
&+\int_{-x}^{\infty} (\mathrm{e}^{\alpha(x+y)^3}-\mathrm{e}^{\alpha x^3})\frac{p_1(-x-y)}{p_1(-x)}\frac{dy}{2\sqrt{2\pi y}}
\end{align*}
which gives us both the continuity on $[-B/\delta,0)$ (from our choice of $\alpha$ and $\beta)$ and the continuity in $0$.\end{proof}
\begin{remark}
    The proof of Theorem \ref{theoremeconvergence p=0} gives more information on the tail of $\mu$. Indeed we easily see that $\mu\left([t,+\infty]\right)\leq C\mathrm{e}^{-\alpha t^3}$ for all $\alpha<1/6$. From the expansion of $p_1$, $1/6$ seems to be an optimal bound. The upper bound for the left side of the tail $\beta<\frac{1}{9}p_1(0)\approx 0,0287$ is a bit disappointing, and we could also expect $\beta<1/6$. Optimizing our method could easily lead to $\beta<\frac{p_1(0)+p_1(1)}{2}$ but this is still less than $1/6.$
\end{remark}

\section{Applications on multiplicative coalescents}\label{Applications}

We give some applications of our results, especially Corollary \ref{proposition convergence forêt} states the convergence of the scaling limit of the forest part of the frozen Erd\H{o}s-Rényi graph with parameter $0$ to a stationary law. We also give a consequence on Aldous' multiplicative coalescent. As mentioned in the introduction, in the critical window the process of the decreasing sizes of the frozen components renormalized by $n^{-2/3}$ followed by the decreasing sizes of the standard components renormalized by $n^{-2/3}$ in $F_{p,n}(t)$ converge towards a process $$\left(\mathcal{F}\mathcal{M}_p(t):t\in \mathbb{R}\right)$$ called the \textit{frozen multiplicative coalescent}. This process has been introduced for $p=\frac{1}{2}$ in \cite{ConCurParking} and is generalized for $p\in [0,1]$ in \cite{ManuscritThèse}. Just as the frozen graph is a modification of the classical Erd\H{o}s-Rényi graph, the frozen multiplicative coalescent is a modification of Aldous' standard multiplicative coalescent. For reminder the multiplicative coalescent is a process $\left(\mathcal{M}(t):t\in \mathbb{R}\right)$ with values in $\ell^2$, and intuitively, a pair of particles of mass $x$ and $y$ merges to a new particle of mass $x+y$ at rate $xy$. The general idea is to recover the dynamics of the sizes of the connected components of the Erd\H{o}s-Rényi graph.  The idea of the frozen multiplicative coalescent of parameter $p$ $\left(\mathcal{F}\mathcal{M}_p(t):t\in \mathbb{R}\right)$ is the same: the particles of the frozen multiplicative coalescent $\mathcal{F}\mathcal{M}_p(t)$ at time $t$ are of two types: the frozen particles whose decreasing masses are in $\ell^1$ and the standard particles whose decreasing masses are in $\ell^2$. Every pair of standard particles of mass $x$ and $y$ merges to a standard particle of mass $x+y$ at rate $xy$, a standard particle of mass $x$ freezes at rate $\frac{x^2}{2}$ and a standard particle of mass $x$ and a frozen particle of mass $y$ merges to a frozen particle of mass $x+y$ at rate $pxy$. 

\begin{figure}[!h]\label{figurecoalescentgelé}
\begin{center}
 \includegraphics[width=14cm]{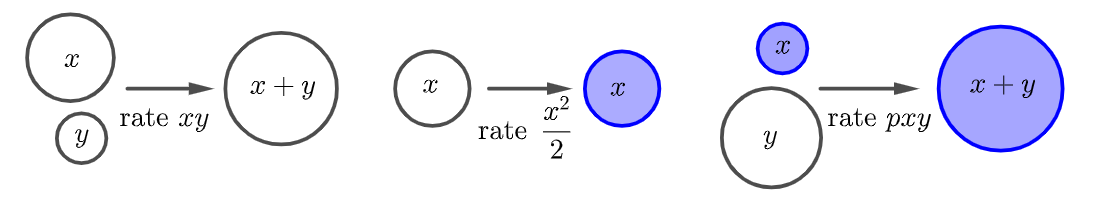}
 \caption{Dynamics of the frozen multiplicative coalescent of parameter $p$. }
\end{center}
\end{figure}
\noindent We give some notations: for $z=(x,y) \in \ell_{\downarrow}^1\times \ell_{\downarrow}^2 $, we shall denote:
$$[z]_{\color{blue}{\bullet}}=x,\,\,\,\,\,[z]_{\circ}=y\,\,\,\,\, \text{  and }\,\,\,\,\, ||z||_{\color{blue}{\bullet}}=\sum_{i\geq 1}x_i,  $$ 
(notice that with these notations we have $X_p(t):=||\mathcal{F}\mathcal{M}_p(t)||_{\color{blue}{\bullet}}$). As we will need it later to state Corollary \ref{corollairecoalescentAldous} notice that there exists an \textit{augmented multiplicative coalescent} $\left(\mathcal{M}(t),\mathcal{S}(t)\right)$ (see \cite{BroutinMarckert} for a construction) where the process $\mathcal{S}$ is the scaling limit of components' surplus in $G\left(n,\left\lfloor \frac{n}{2}+\frac{t}{2}n^{2/3}\right\rfloor\right)$. This allows us to talk about the surplus of the multiplicative coalescent.

Our results also involves scaling limits of critical random forests, studied by Martin and Yeo in \cite{YeoMartinCriticalForests} so we recall their description from \cite{ConCurParking}: for $n\geq 1$ and $m\geq 0$ we note $W(n,m)$ a uniform random forest with $n$ vertices and $m$ edges (be careful that there is no obvious coupling of $W(n,m)$ for variying $m\geq 0$). Inspired by the critical window, for $t\in \mathbb{R}$ we note$$W_n(t)=W\left(n,\left\lfloor \frac{n}{2}+\frac{t}{2}n^{2/3}\right\rfloor \right)$$ and $\mathbb{W}_n(t)$ the sequence of its component sizes in decreasing order renormalized by $n^{-2/3}$. We let $\left(\mathbf{L}(s)\right)_{s\geq 0}$ be a stable Lévy process with index $3/2$ and only positive jumps, starting from $0$ and with Lévy measure $\frac{1}{\sqrt{2\pi}}x^{-5/2}\mathrm{1}_{x>0}$, so that for $s>0$ the density of $\mathbf{L}(s)$ is $p_s(.)$, where for $x\in \mathbb{R}$:
$$p_s(x)=s^{-2/3}p_1(xs^{-2/3}) $$ (see e.g. \cite{BertoinLevyprocesses}, Chapter VIII or \cite{Zolotarevstable}). For any $t \in \mathbb{R}$, we define the process $\left(\mathbf{L}_s^{(t)}:0\leq s\leq 1\right)  $ obtained by conditioning the process $\mathbf{L}$ to be equal to $t$ at time $1$ (this is called a Lévy bridge and is a degenerated conditioning but still can be obtained with the help of $h-$transforms \cite{BertoinLevyprocesses}). With these notations, Contat and Curien \cite{ConCurParking} proved that for every fixed $t\in \mathbb{R}$ the following convergence holds:
\begin{equation}\label{convergence forêt uniforme}
\mathbb{W}_n(t)\xrightarrow[n\to +\infty]{(d)}\left(\Delta \mathbf{L}_s^{(t)}:0\leq s \leq 1\right)^{\downarrow}=:\Delta \mathbf{L}^{(t)} ,
\end{equation}
for the $\ell^{3/2+\epsilon}$ topology for any fixed $\epsilon>0$.

\noindent Recall the notation $[\mathcal{F}\mathcal{M}_p(t)]_{\circ}$ (respectively $[\mathcal{F}\mathcal{M}_p(t)]_{\color{blue}{\bullet}}$) the decreasing sizes of standard (respectively frozen) particles in $\mathcal{F}\mathcal{M}_p(t)$. Equipped with these notations we have the following proposition for the law of standard particles in $\mathcal{F}\mathcal{M}_p(t)$.

\begin{proposition}\label{description partie foret}
For every $p\in [0,1]$, for every $t \in \mathbb{R}$, conditionally on the total frozen mass $X_p(t)$,  the law of the sizes of the standard particles $[\mathcal{F}\mathcal{M}_p(t)]_{\circ}$ is the same as the law of $\Delta \mathbf{L}^{\left(t-X_p(t)\right)} $, $i.e.$ the law the jumps of the conditioned Lévy process $\mathbf{L}^{\left(t-X_p(t)\right)}$. 
\end{proposition}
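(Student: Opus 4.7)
The plan is to derive the result by transferring a discrete conditional uniformity statement about $F_p(n,m)$ through the scaling limit, together with the convergence \eqref{convergence forêt uniforme} for uniform random forests. The main idea is that, conditionally on the frozen part of $F_p(n,m)$, the standard part is a uniform random forest on $V_\circ$ with the ``correct'' number of edges, and a careful bookkeeping puts this uniform forest in the critical window with effective time parameter $t' = t - X_p(t)$.

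The first and main step is the following discrete statement: in $F_p(n,m)$, conditionally on the full evolution of the frozen part (the random set $V_\bullet$ of frozen vertices, the frozen subgraph and the sequence of times and rules that produced it), the graph induced on the standard vertices $V_\circ := \{1,\ldots,n\}\setminus V_\bullet$ is a uniform random forest on $V_\circ$ with $N_F$ edges, where $N_F$ is the conditionally determined number of forest edges. The argument rests on the exchangeability of the i.i.d.\ sequence $(E_i, U_i)_{i \geq 1}$: once all the information producing the frozen part has been revealed, the remaining forest-producing edges form an exchangeable family of uniformly sampled non-cycle-creating pairs in $V_\circ \times V_\circ$, and the classical sequential sampling construction of uniform random forests yields the claim. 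The main obstacle lies precisely here: the exchangeability argument must be handled with care, since the conditioning involves the entire dynamics of the freezing process rather than a static structure, and one has to identify precisely the $\sigma$-algebra on which to condition so that the residual freedom of the forest-producing edges is genuinely exchangeable.

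The remaining work is asymptotic accounting. Since every frozen component is a unicycle, the frozen subgraph has exactly $N_\bullet := |V_\bullet|$ edges, so $N_F = m - N_\bullet - R$ where $R$ counts the rejected edges. A first-moment computation gives $\mathbb{E}[R] = O(n^{1/3})$: edges with both endpoints already frozen contribute $O(1)$ in total, while tree-to-unicycle edges discarded by $U_i > p$ contribute $O(n^{1/3})$. Hence $R = o(n^{2/3})$ in probability, and combining $m = \lfloor n/2 + tn^{2/3}/2 \rfloor$ with $N_\bullet = X_p(t)\, n^{2/3}(1 + o(1))$ yields, after elementary rearrangement,
\begin{equation*}
N_F \;=\; \frac{N_\circ}{2} \,+\, \frac{t - X_p(t)}{2}\, N_\circ^{2/3} \,+\, o(n^{2/3}), \qquad N_\circ := n - N_\bullet,
\end{equation*}
so that the uniform random forest on $V_\circ$ lies in the critical window with effective parameter $t - X_p(t)$. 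Applying \eqref{convergence forêt uniforme} to this forest on $N_\circ \sim n$ vertices and combining with the joint convergence $\mathbb{F}_{p,n}(t) \to \mathcal{F}\mathcal{M}_p(t)$ then gives, conditionally on $X_p(t)$, convergence of the renormalized sequence of standard component sizes to $\Delta \mathbf{L}^{(t - X_p(t))}$, which concludes the proof.
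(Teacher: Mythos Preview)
Your proposal is correct and follows essentially the same route as the paper: both arguments use the conditional uniformity of the forest part of $F_p(n,m)$, count the number of forest edges (noting that the number of discarded edges is $O(n^{1/3})$) to identify the effective critical parameter $t-X_p(t)$, and then pass to the limit via \eqref{convergence forêt uniforme} together with the convergence of the frozen Erd\H{o}s--R\'enyi to $\mathcal{F}\mathcal{M}_p$. The only notable difference is that the paper simply invokes the conditional uniformity statement from \cite{ConCurParking} (conditioning on the total frozen mass and the total edge count), whereas you sketch an exchangeability argument conditioning on the full frozen evolution; your conditioning is finer but leads to the same conclusion, and the remaining accounting is identical.
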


In particular for $p=1$, since the only discarded edges are the edges connecting two frozen vertices, the forest part of $F_1(n,m)$ and $G(n,m)$ coincide:
$$[F_1(n,m)]_{\text{tree}}=[G(n,m)]_{\text{tree}}.$$
In the limit we get that the vector of the sizes of the standard particles $[\mathcal{F}\mathcal{M}_1(t)]_{\circ}$ is equal to the vector of the sizes of particles without surplus in the augmented multiplicative coalescent, which immediately gives the following corollary.
\begin{corollaire}\label{corollairecoalescentAldous}
    For every $t \in \mathbb{R}$, the process of the total mass of the particles with surplus in $\mathcal{M}(t)$ has law $X_1(t)$, and, conditionally on it, the remaining particles are distributed as the jumps of the conditioned Lévy process $\mathbf{L}^{\left(t-X_1(t)\right)}$.  
\end{corollaire}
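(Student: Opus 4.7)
The strategy is to reduce the corollary to Proposition \ref{description partie foret} applied at $p=1$ by exploiting the very tight coupling between $F_1(n,m)$ and the standard Erd\H{o}s-R\'enyi graph $G(n,m)$, and then pass to the scaling limit via the construction of the augmented multiplicative coalescent.

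First, I would make precise the discrete-level identification. In the coupling between $F_1$ and $G$ provided in the introduction, the only edges discarded in $F_1(n,m)$ are those whose both endpoints are already frozen, that is, those connecting two existing unicycles of $F_1$. Such an edge merges two components that already have surplus $\geq 1$ in $G(n,m)$, so discarding it only alters the internal cycle structure of the surplus part and not the partition of vertices into ``surplus'' versus ``surplus-free'' components. Consequently, at every step $m\geq 0$, the set of frozen vertices in $F_1(n,m)$ coincides exactly with the set of vertices lying in components with surplus in $G(n,m)$, and the tree components of $F_1(n,m)$ coincide with the tree components of $G(n,m)$ (as partitions of the vertex set).

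Second, I would transfer this identity to the scaling limit. In the critical window, the frozen mass $\|\mathbb{F}_{1,n}(t)\|_{\color{blue}{\bullet}}$ renormalized by $n^{-2/3}$ converges to $X_1(t)$, while by Broutin--Marckert's construction of $(\mathcal{M},\mathcal{S})$, the total mass of components with surplus $\geq 1$ in $G\bigl(n,\lfloor n/2+tn^{2/3}/2\rfloor\bigr)$, renormalized by $n^{-2/3}$, converges to the total mass of particles with surplus in $\mathcal{M}(t)$. Since at the discrete level these two quantities are identical, the first assertion of the corollary follows. Likewise, $[\mathcal{F}\mathcal{M}_1(t)]_{\circ}$ coincides in distribution with the collection of decreasing renormalized sizes of the surplus-free particles of $\mathcal{M}(t)$, jointly with the total surplus mass $X_1(t)$.

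Third, I would apply Proposition \ref{description partie foret} with $p=1$: conditionally on $X_1(t)$, the law of $[\mathcal{F}\mathcal{M}_1(t)]_{\circ}$ is that of $\Delta\mathbf{L}^{(t-X_1(t))}$. Transporting this statement across the identification of Step 2 yields exactly the claimed conditional description of the surplus-free particles of $\mathcal{M}(t)$.

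The only genuine obstacle is bookkeeping: one must check that the discrete coincidence ``frozen vertices $=$ surplus vertices'' lifts to a \emph{joint} convergence of the pair (total surplus mass, sizes of surplus-free components) in the appropriate topology, so that conditional statements on the discrete side pass to conditional statements on the limit. Since both convergences already appear in \cite{ConCurParking} (generalized in \cite{ManuscritThèse}) and use the same coupling, this is a routine joint-continuity argument rather than a new technical step; no additional analytic input beyond Proposition \ref{description partie foret} is required.
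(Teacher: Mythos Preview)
Your proposal is correct and follows essentially the same route as the paper: the paper's argument (the paragraph immediately preceding the corollary) is precisely to note that for $p=1$ the only discarded edges have both endpoints frozen, so $[F_1(n,m)]_{\text{tree}}=[G(n,m)]_{\text{tree}}$, pass to the limit, and invoke Proposition~\ref{description partie foret}. Your write-up is simply a more careful unpacking of the same idea, including the inductive verification that frozen vertices coincide with surplus vertices and the remark on joint convergence; no genuinely different ingredient is involved.
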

\noindent We give a short proof of Proposition \ref{description partie foret}.
\begin{proof}[Proof of Proposition \ref{description partie foret}]
    
We know from \cite{ConCurParking} that for any $n\geq 1$ and $m\geq 0$, conditionally on the number of edges of $F_p(n,m)$ and on $||\mathcal{F}\mathcal{M}_p(n,m)||_{\color{blue}{\bullet}} $, the forest part of $F_p(n,m)$ is a uniform random forest with $$n-||\mathcal{F}\mathcal{M}_p(n,m)||_{\color{blue}{\bullet}} 
\,\,\,\text{vertices}  $$ and $$||\mathcal{F}\mathcal{M}_p(n,m)||_{\_\_}  -||\mathcal{F}\mathcal{M}_p(n,m)||_{\color{blue}{\bullet}}\,\,\,\text{edges}, $$
where $||\mathcal{F}\mathcal{M}_p(n,m)||_{\_\_}$ is the total number of edges in $F_p(n,m)$.
The number of vertices in $[F_{n,p}(t)]_{\text{tree}}$ is:
$$N=n-||F_{p,n}(t)||_{\color{blue}{\bullet}}$$
and the number of edges is:
$$M=\lfloor \frac{n}{2}+\frac{t}{2}n^{2/3}\rfloor- ||F_{p,n}(t)||_{\color{blue}{\bullet}}- D_{p,n}(t)  $$
where $D_p(n,m)$ is the number of discarded edges in $F_p(n,m)$ and $D_{p,n}(t)=D_p(n,\lfloor \frac{n}{2}+\frac{t}{2}n^{2/3}\rfloor)$ is the analogue continous time version. It has been shown (see \cite{ConCurParking} for the case $p=\frac{1}{2}$) that in the critical window $D_{p,n}$ is of order $n^{1/3}$. Combining this with the convergence of the total frozen mass, we get that for $n\to + \infty$:
\begin{align*}
M&= \frac{n}{2}+\frac{t}{2}n^{2/3}-||F_{p,n}(t)||_{\color{blue}{\bullet}}+ O(n^{1/3})\\
&=\frac{N}{2}+\frac{t-X_p(t)}{2}N^{2/3}+o(N^{2/3}).
\end{align*}
Combining this with the convergence of the frozen Erd\H{o}s-Rényi to the frozen multiplicative coalescent, we get the desired proposition.
\end{proof}

We focus on $p=0$: Proposition \ref{description partie foret} states that conditionally on the size of the freezer, the particles without surplus are distributed as the jump of the conditioned Lévy process $\mathbf{L}^{\left(t-X_0(t)\right)}$ and Theorem \ref{theoremeconvergence p=0} that $X_0(t)-t$ converges to a stationary law $\mu$ when $t$ goes to $+\infty$. We immediately get the following proposition already announced in the introduction.
\begin{corollaire}\label{proposition convergence forêt}
    For all $\epsilon>0$, we have the following convergence in distribution for the $\ell^{3/2+\epsilon}$ topology
$$[\mathcal{F}\mathcal{M}_0(t)]_{\circ} \xrightarrow[t\to +\infty]{(d)} \Delta \mathbf{L}^{-M}, $$
where $M$ is a random variable with distribution $\mu$.
\end{corollaire}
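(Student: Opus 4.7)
The plan is to combine Proposition \ref{description partie foret} (specialized to $p=0$) with Theorem \ref{theoremeconvergence p=0}, the only nontrivial ingredient being a continuity property of the map $s \mapsto \mathrm{law}\bigl(\Delta \mathbf{L}^{(s)}\bigr)$. Proposition \ref{description partie foret} tells us that, conditionally on $X_0(t)$, the sequence $[\mathcal{F}\mathcal{M}_0(t)]_{\circ}$ has the same law as $\Delta \mathbf{L}^{(t - X_0(t))}$. Theorem \ref{theoremeconvergence p=0} gives $X_0(t) - t \xrightarrow{(d)} M$ where $M \sim \mu$, equivalently $t - X_0(t) \xrightarrow{(d)} -M$. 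Thus the law of $[\mathcal{F}\mathcal{M}_0(t)]_{\circ}$ is the mixture
\begin{equation*}
\mathrm{law}\bigl([\mathcal{F}\mathcal{M}_0(t)]_{\circ}\bigr) \;=\; \int_{\mathbb{R}} \mathrm{law}\bigl(\Delta \mathbf{L}^{(s)}\bigr)\, \rho_t(ds),
\end{equation*}
where $\rho_t$ denotes the law of $t - X_0(t)$, and $\rho_t$ converges weakly to the law of $-M$.

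From there I would proceed in two further steps. First, by Skorohod representation for the real-valued convergence $t - X_0(t) \xrightarrow{(d)} -M$, I may pass to a probability space on which the convergence is almost sure. Second, I would establish that $s \mapsto \mathrm{law}\bigl(\Delta \mathbf{L}^{(s)}\bigr)$ is continuous for the weak topology on probability measures on $\ell^{3/2 + \epsilon}$. Together these yield the desired convergence by a standard continuous-mapping / conditional-expectation argument on mixtures.

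The main obstacle is the continuity of $s \mapsto \mathrm{law}\bigl(\Delta \mathbf{L}^{(s)}\bigr)$, since $\mathbf{L}^{(s)}$ is only defined through an $h$-transform of the unconditioned Lévy process $\mathbf{L}$. The natural route is to observe that for any $\eta \in (0,1)$, the restriction of $\mathbf{L}^{(s)}$ to $[0, 1-\eta]$ is absolutely continuous with respect to the restriction of $\mathbf{L}$, with density
\begin{equation*}
\frac{p_{\eta}(s - \mathbf{L}_{1-\eta})}{p_1(s)}.
\end{equation*}
Since $p_1$ is continuous and strictly positive everywhere on $\mathbb{R}$ (as recalled in Section \ref{section préliminaires}), the ratio $p_{\eta}(s' - x)\, p_1(s) / \bigl(p_{\eta}(s - x)\, p_1(s')\bigr)$ tends to $1$ as $s' \to s$, locally uniformly in $x$. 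This gives continuity of the bridge law in the terminal value $s$ on $[0, 1-\eta]$ in total variation, hence weak continuity of the decreasing rearrangement of the jumps of size $\geq \eta'$ for any $\eta' > 0$.

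To upgrade to convergence in $\ell^{3/2+\epsilon}$ one needs a uniform bound, in $s$ ranging over a compact set, on the $\ell^{3/2+\epsilon}$-mass of small jumps of $\mathbf{L}^{(s)}$. This should follow from comparison with $\mathbf{L}$ itself (whose Lévy measure $\tfrac{1}{\sqrt{2\pi}} x^{-5/2}\mathbf{1}_{x>0}$ puts its jumps in $\ell^{3/2+\epsilon}$ almost surely) combined with the uniform boundedness of the $h$-transform density on $[0, 1-\eta]$ for any fixed $\eta$. Once this tightness-type control of small jumps is in hand, the continuity of $s \mapsto \mathrm{law}\bigl(\Delta \mathbf{L}^{(s)}\bigr)$ follows, and Corollary \ref{proposition convergence forêt} is obtained.
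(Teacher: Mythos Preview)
Your approach is exactly the one the paper takes: combine Proposition \ref{description partie foret} (the conditional law of $[\mathcal{F}\mathcal{M}_0(t)]_{\circ}$ given $X_0(t)$ is that of $\Delta\mathbf{L}^{(t-X_0(t))}$) with Theorem \ref{theoremeconvergence p=0} (convergence of $t-X_0(t)$ to $-M$), and deduce the result. The paper in fact gives no further detail, writing only that one ``immediately get[s]'' the corollary from these two ingredients; you have gone beyond the paper by identifying and sketching the one genuinely missing step, namely the continuity in $s$ of the law of $\Delta\mathbf{L}^{(s)}$ in $\ell^{3/2+\epsilon}$, and your outline via the $h$-transform density $p_\eta(s-\mathbf{L}_{1-\eta})/p_1(s)$ together with a uniform small-jump control is a reasonable way to justify it.
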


As already explained in the introduction, the idea of this result  is that the case $p=0$ the frozen components are completely stopped and the only way to increase the frozen mass is to create an intern cycle in a tree. As a consequence only "big" trees become frozen, whereas small trees stay without surplus. When $p>0$ a tree can freeze if it is connected with a unicycle, so that small trees have more probability to freeze, which prevents the existence of a stationary law for these components. Informally, since $t-X_p(t)\underset{t\to +\infty}{\approx}-pt$, for large $t$ the law of standard particles in $[\mathcal{F}\mathcal{M}_0(t)]_{\circ}$ is the law of a $3/2$-stable Levy process with only positive jumps conditioned to be very negative a time $1$ (roughly equal to $-pt$). The jumps of such a process necessarily tends towards $0$ when $t$ goes to $+\infty$. 

\bibliographystyle{plain}
\nocite{*}
\bibliography{ref}

\end{document}